\documentclass[preprint,11pt]{elsarticle}
\usepackage[T1]{fontenc}
\usepackage{lmodern}
\usepackage[a4paper,margin=27mm]{geometry}
\usepackage{amsmath,amssymb,amsthm,mathtools,enumitem,needspace,etoolbox}
\usepackage[hypertexnames=false,colorlinks=true,linkcolor=black,citecolor=black,urlcolor=blue]{hyperref}
\hypersetup{pdftitle={Finite comparison and countable injective hulls},pdfauthor={Shaohui Peng and Guocheng Dai},pdflang={en},pdfsubject={Bergman Questions 3.1 and 3.2, finite stabilization, and the semiprimary Faith conjecture}}
\journal{Journal of Algebra}
\makeatletter
\def\ps@pprintTitle{\let\@oddhead\@empty\let\@evenhead\@empty\def\@oddfoot{\hfil Revised manuscript, version 14, 30 October 2026\hfil}\let\@evenfoot\@oddfoot}
\makeatother
\allowdisplaybreaks
\setlist[enumerate]{label=\textup{(\roman*)},itemsep=3pt,topsep=4pt}
\newtheorem{theorem}{Theorem}[section]
\newtheorem{proposition}[theorem]{Proposition}
\newtheorem{lemma}[theorem]{Lemma}
\newtheorem{corollary}[theorem]{Corollary}
\theoremstyle{definition}\newtheorem{example}[theorem]{Example}
\theoremstyle{remark}\newtheorem{remark}[theorem]{Remark}
\theoremstyle{definition}\newtheorem{problem}[theorem]{Problem}
\newtheorem{definition}[theorem]{Definition}
\BeforeBeginEnvironment{lemma}{\Needspace{6\baselineskip}}
\BeforeBeginEnvironment{theorem}{\Needspace{5\baselineskip}}
\BeforeBeginEnvironment{proposition}{\Needspace{4\baselineskip}}
\DeclareMathOperator{\Hom}{Hom}
\DeclareMathOperator{\End}{End}
\DeclareMathOperator{\Ext}{Ext}
\DeclareMathOperator{\Tor}{Tor}
\DeclareMathOperator{\Soc}{Soc}
\newcommand{\N}{\mathbb N}
\newcommand{\id}{\mathrm{id}}
\DeclareMathOperator{\Jac}{Jac}
\DeclareMathOperator{\im}{Im}
\newcommand{\C}{\mathcal C}
\newcommand{\ann}{\operatorname{ann}}
\newcommand{\la}{\operatorname l}
\newcommand{\ra}{\operatorname r}
\begin{document}
\begin{frontmatter}
\title{Finite comparison and countable injective hulls}
\author[aff1]{Shaohui Peng}
\ead{20250801074@stu.scinu.edu.cn}
\author[aff1]{Guocheng Dai}
\ead{ggcdai@163.com}
\address[aff1]{School of Mathematical Sciences, Sichuan Normal University,
Chengdu 610066, P.R. China}
\begin{abstract}
We give a negative answer to the unrestricted product version of
Bergman's finite-comparison theorem, even when all factors have
finite length. Finite comparison does hold for quasi-injective
common modules under finite cogeneration. We also prove a
semidirect-product case of Bergman's Question 3.2.
For a semiprimary right self-injective ring, failure of the
quasi-Frobenius conclusion produces a cyclic module with simple
top and countably infinite homogeneous socle inside a fixed
countable injective hull. We identify its extension class and
an associated opposite-side extension obstruction.
Over the square-zero algebra $k\oplus k^{(\mathbb N)}$, we
compute the countable hull quotient as endomorphisms modulo
finite-rank maps. This quotient also describes the socle-radical
quotient of the regular product, yields a second counterexample
with free cyclic family factors, and detects splitting of
specified simple submodules. The general Faith conjecture remains
unresolved.
\end{abstract}
\begin{keyword}
Finite stabilization\sep Bergman's questions\sep semidirect product\sep quasi-injective module\sep Faith conjecture
\MSC[2020] 16D70\sep16D50\sep16L60\sep08B25\sep18A30
\end{keyword}
\end{frontmatter}

\section{Introduction}
Bergman proved that two infinite direct-sum decompositions of a
module admit a finite comparison when the distinguished factors
and one family of summands are finitely generated
\cite[Theorem 1.3]{Bergman}. We study the corresponding product
question, a semidirect-product version, and the extension problem
arising from a countable direct sum inside its injective hull.

\subsection{Finite comparison of products}
For product decompositions
\begin{equation}\label{intro-products}
A\times\prod_{i\in\Lambda}C_i\cong M\cong
B\times\prod_{j\in\Delta}D_j,
\end{equation}
Bergman's Question 3.1 asks for a conclusion of the form
\begin{equation}\label{intro-comparison}
A\oplus Y\cong B\oplus\prod_{j\in\Delta_0}D_j,
\qquad Y\mid\prod_{i\in\Lambda_0}C_i,
\end{equation}
with finite $\Lambda_0$ and $\Delta_0$.
Theorem~\ref{counterexample} gives cyclic length-two modules
$E_n$ and a simple module $L$ such that, for $P=\prod_nE_n$,
$L\oplus P\cong L^2\oplus P$, while
\eqref{intro-comparison} fails for every finite choice and
every allowed summand $Y$. All factors are both finitely
generated and finitely cogenerated.

If the common module is quasi-injective, finite cogeneration
of $A,B,D_j$ suffices. The same conclusion holds when $M$
is nonsingular and these factors have finite uniform dimension
(Theorem~\ref{product-comparison}). Finite projections and
Warfield's cancellation theorem \cite[Corollary 2.6]{Warfield}
give the proof. Example~\ref{prufer-boundary} explains why
direct finiteness alone does not suffice.

For Question 3.2, the small-object and ordinary group cases
are already discussed by Bergman. We give their categorical
formulation and a group-reduct version using the modular law
of Crawley--J\'onsson \cite[Lemma 2.3]{CJ}.
Theorem~\ref{thm:semidirect} treats
\[
M=A\rtimes\bigoplus_i C_i=B\rtimes\bigoplus_jD_j,
\qquad A,B\trianglelefteq M,
\]
where $A,B,D_j$ are finitely generated and the complements
are restricted direct products. It gives an internal finite
semidirect comparison with the actions inherited from $M$.

\subsection{The countable extension in Faith's conjecture}
Write $J=\Jac R$. In the ring-theoretic part we assume
\begin{equation}\label{F}\tag{F}
D=R/J\text{ is semisimple artinian},\qquad
J^d=0,\qquad R_R\text{ is injective},
\end{equation}
where $d\geq1$. The semiprimary form of Faith's conjecture
asks whether every such ring is quasi-Frobenius (QF).
Retain the canonical inclusion $E=R^{(\N)}\subseteq R^{\N}$,
embed its injective hull $I$ in that product, and put $B=I/E$.
The QF conclusion is equivalent to $B=0$.

If $R$ is not QF, Theorem~\ref{homogeneous} constructs
a cyclic $U\subseteq I$ and simple right modules $Q,T$ with
\[
U/UJ\cong Q,\qquad UJ=\Soc U=U\cap E\cong T^{(\N)},
\qquad UJ^2=0.
\]
It also identifies the nonzero Baer class represented by
$0\to E\to E+U\to Q\to0$.
Proposition~\ref{socle-ext-image} computes the image of the
map on $\Ext^1$ induced by $S^{(\N)}\hookrightarrow E$,
where $S=\Soc R$. The same countable quotient of $J/J^2$
gives the left extension obstruction in
Proposition~\ref{opposite-second-layer-test}.

The second-socle pairing is classical
\cite[proof of Proposition 4]{NY97}. Our construction retains
the fixed hull and the coordinate maps representing its
extension class. Ara--Nicholson--Yousif \cite{ANY} study
compatible bimodule pairings, and Iovanov \cite{Iov} compares
cardinalities of Loewy layers. The conditional transfer in
Theorem~\ref{action-criterion} and the dimension argument in
Proposition~\ref{central-layer-criterion} describe two uses of
these ideas. The uniform finite-support assertion in
Problem~\ref{remaining-gap} is not proved here.

\subsection{An explicit connection}
Section~\ref{nonopen-section} applies the countable construction
to the square-zero ring of Theorem~\ref{counterexample}, with
injective coefficient $C=\Hom_k(R,k)$. For its fixed sum and hull,
$E_C=C^{(\N)}\subseteq I_C\subseteq C^{\N}$, we obtain
\[
\frac{\Soc(R^{\N})}{R^{\N}J}
 \cong\frac{I_C}{E_C}
 \cong\Ext_R^1(R/J,E_C)
 \cong\frac{\End_k(k^{(\N)})}{\text{finite-rank maps}}.
\]
This gives a second product counterexample, with all family
factors free cyclic. The length-two product $P$ also enters
the same construction: inclusion of $P\cap I_C$ in $I_C$
induces an injection $\Soc P/PJ\hookrightarrow I_C/E_C$.
Thus the correspondence is realized inside a common product.
The base ring in this example is not self-injective; it is
an application of the countable construction with a different
injective coefficient.

Sections~\ref{restriction-section}--\ref{quotient-section}
give the socle-restriction and homological facts used in the
construction. Section~\ref{homogeneous-section} treats the
two-layer witness and the opposite extension problem.
The structural results on PF rings, perfect rings and injective
hulls used there are classical \cite{Bass,JW,FU,Levine}.

Rings have identity, and modules are unital right modules
unless a side is specified. We write $X^{(\Lambda)}$ and
$X^\Lambda$ for sums and products, and $Y\mid X$ for a module
isomorphic to a direct summand of $X$. An assertion that a
specified inclusion splits refers to that inclusion.
A module is finitely cogenerated if its socle is essential
and has finite length, equivalently if every family of
submodules with zero intersection has a finite subfamily
with zero intersection. Hypothesis \eqref{F} is used only
where stated.

\section{Bergman's product question}\label{product-section}
We begin with the failure of unrestricted finite comparison.
The positive results that follow separate two requirements:
finite projections and cancellation in a common quasi-injective module.

\subsection{A length-two counterexample}\label{counterexample-section}
Let $k$ be a field. Form the commutative ring
\[
R=k[x_1,x_2,\ldots]/(x_ix_j:i,j\geq1),\qquad
J=\bigoplus_{n\geq1}kx_n,
\qquad L=R/J.
\]
For $n\geq1$, put
$E_n=R/\bigoplus_{m\ne n}kx_m$ and $P=\prod_{n\geq1}E_n$.
The module $E_n$ has basis $u_n,v_n$ with
\[
u_nx_m=\delta_{mn}v_n,\qquad v_nx_m=0.
\]
It is cyclic and uniserial of length two, with
$\Soc E_n=E_nJ=kv_n\cong L$.

\begin{theorem}\label{counterexample}
For these modules, $P\cong L\oplus P$. If $a\ne b$ are
finite nonnegative integers, then for all finite
$I_0,J_0\subseteq\N$ and every summand $Y\mid\prod_{n\in I_0}E_n$,
\begin{equation}\label{finite-failure}
L^a\oplus Y\not\cong L^b\oplus\prod_{n\in J_0}E_n.
\end{equation}
In particular, taking $A=L$, $B=L^2$, and $C_n=D_n=E_n$
gives product decompositions with no conclusion of the form
\eqref{finite-comparison}, although every displayed factor
is nonzero and has finite length.
\end{theorem}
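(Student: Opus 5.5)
The plan has two parts. First, build an explicit isomorphism $P\cong L\oplus P$ by linear algebra, using the fact that $PJ$ is much smaller than $\Soc P$. Second, rule out \eqref{finite-failure} with an additive numerical invariant on finite-length modules.

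\emph{The isomorphism $P\cong L\oplus P$.} Write an element of $P$ as $p=(\alpha_nu_n+\beta_nv_n)_n$, with $\alpha=(\alpha_n)$ and $\beta=(\beta_n)$ in $k^{\N}$. Then the action is
\[
p\,x_m=\alpha_m v_m .
\]
So $P$ is the vector space $k^{\N}\oplus k^{\N}$ on which $x_m$ acts by $(\alpha,\beta)\mapsto(0,\alpha_me_m)$. In particular $PJ=\bigoplus_n kv_n$ consists of the finitely supported $\beta$'s, while $\Soc P=\prod_n kv_n$ consists of all $\beta$'s.

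Choose a $k$-linear complement $W$ with $k^{\N}=k^{(\N)}\oplus W$. Since $W$ is infinite-dimensional, there is a $k$-linear isomorphism $\psi\colon W\to k\oplus W$. Define $\sigma\colon k^{\N}\to k\oplus k^{\N}$ by $\sigma(f)=(0,f)$ on $k^{(\N)}$ and $\sigma=\psi$ on $W$; this is a linear bijection. Now put $\theta(\alpha,\beta)=(\alpha,\sigma(\beta))$, regarded as a map $P\to P\oplus L$ after reordering coordinates; it is bijective.

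The key check is that $\theta$ commutes with each $x_m$. Applying $x_m$ always produces $\beta$-vectors in $k^{(\N)}$, and $\sigma$ is the identity there with zero $L$-component. Both routes therefore give $(0,\alpha_me_m)$ with zero $L$-component. Hence $\theta$ is an $R$-isomorphism $P\cong L\oplus P$.

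\emph{The invariant.} For a finite-length module $M$, put
\[
\delta(M)=\ell(M/MJ)-\ell(MJ).
\]
This is additive on finite direct sums, and $\delta(L)=1$, $\delta(E_n)=1-1=0$.

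The only step needing care is the summand $Y$, which I expect to be the main obstacle. Each $E_n$ is cyclic and uniserial of finite length, hence indecomposable with local endomorphism ring. By Krull--Schmidt for the finite-length module $\prod_{n\in I_0}E_n=\bigoplus_{n\in I_0}E_n$, any direct summand $Y$ is isomorphic to a direct sum of some of the $E_n$, so $\delta(Y)=0$. Then
\[
\delta\bigl(L^a\oplus Y\bigr)=a,\qquad
\delta\Bigl(L^b\oplus\prod_{n\in J_0}E_n\Bigr)=b .
\]
An isomorphism would force $a=b$, which proves \eqref{finite-failure}.

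\emph{Consequence.} Applying the isomorphism of the first part twice gives
\[
L\oplus P\cong L\oplus(L\oplus P)\cong L^2\oplus P .
\]
Thus $A=L$, $B=L^2$ and $C_n=D_n=E_n$ give two product decompositions of a common module. Taking $a=1$, $b=2$ in the invariant argument shows that no finite comparison of the form \eqref{intro-comparison} exists for them.
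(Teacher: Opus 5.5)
Your proof is correct. Its first half is the paper's argument in different packaging. The paper splits $P=(F\oplus F_0)\oplus(0\oplus W)$ as $R$-modules and absorbs $L$ into the $J$-annihilated summand $W\cong k\oplus W$; your single map $\theta$ does exactly this. The one fact you leave unjustified, $\dim_kW=\infty$, holds because indicator functions of pairwise disjoint infinite subsets of $\N$ are linearly independent modulo $k^{(\N)}$.

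The second half takes a genuinely different route. You identify $Y$ up to isomorphism by Krull--Schmidt and then evaluate an additive invariant. Once you have Krull--Schmidt, it already finishes the proof by counting indecomposable summands isomorphic to $L$. Your $\delta$ also depends on that step: it does not vanish on every module whose socle equals its radical, for instance $\delta=-1$ on $R/\bigoplus_{m\geq3}kx_m$.

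The paper uses $d(N)=\dim_k(\Soc N/NJ)$ and gets $d(Y)=0$ for every summand $Y$ of $E_{F'}=\prod_{n\in F'}E_n$ directly. From $E_{F'}=Y\oplus Z$ one has $(E_{F'}J)\cap Y=YJ$, so intersecting $\Soc E_{F'}=E_{F'}J$ with $Y$ gives $\Soc Y=YJ$. This needs neither Fitting's lemma nor finite length. That is why the same lines apply verbatim in Theorem~\ref{free-product-bridge} to summands of $R^m$, where $\ell(R^mJ)$ is infinite and your $\delta$ is undefined. It also isolates the invariant $\Soc N/NJ$, which equals $F/F_0$ for $N=P$ and reappears as $\mathcal H/\mathcal F$ in Section~\ref{nonopen-section}.

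Your route, in exchange, is a standard textbook tool and yields a complete description of the possible summands $Y$: up to isomorphism they are the subproducts $\prod_{n\in I_1}E_n$ with $I_1\subseteq I_0$.
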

\begin{proof}
Write $F=k^{\N}$ and $F_0=k^{(\N)}$. In the coordinates
$P=F\oplus F$ supplied by the bases $u_n,v_n$, the action is
\begin{equation}\label{product-action}
(a,b)\left(c+\sum d_nx_n\right)
 =\left(ca,cb+(d_na_n)_n\right),
\end{equation}
where only finitely many $d_n$ are nonzero. Hence
\begin{equation}\label{radical-discrepancy}
PJ=0\oplus F_0,\qquad \Soc P=0\oplus F.
\end{equation}
For the first equality, apply $\sum b_nx_n$ to
$((1,1,\ldots),0)$ for a finitely supported $(b_n)$.
The second equality follows by testing multiplication by
each $x_n$.

Choose a vector-space complement $F=F_0\oplus W$. Then
\[
P=(F\oplus F_0)\oplus(0\oplus W)
\]
as $R$-modules, and $J$ annihilates $W$.
The vector space $W\cong F/F_0$ is infinite-dimensional:
indicator functions of pairwise disjoint infinite subsets
of $\N$ are linearly independent modulo $F_0$.
Thus $W\cong k\oplus W$, by shifting a countably infinite
part of a basis. This proves $P\cong L\oplus P$.

For a finite set $F'$, the module $E_{F'}=\prod_{n\in F'}E_n$
satisfies $\Soc E_{F'}=E_{F'}J$. If $E_{F'}=Y\oplus Z$, then $E_{F'}J=YJ\oplus ZJ$, so
$(E_{F'}J)\cap Y=YJ$. Thus intersecting the socle equality
with $Y$ gives $\Soc Y=YJ$.
For a finite-length module $N$, define
\[
d(N)=\dim_k(\Soc N/NJ).
\]
This is defined since $J^2=0$, is additive, and is preserved
by isomorphisms. We have $d(Y)=d(E_{J_0})=0$ and $d(L^a)=a$,
which proves \eqref{finite-failure}. Iterating
$P\cong L\oplus P$ gives the asserted product decompositions.
\end{proof}

\paragraph{Comparison with Bergman's product question.}
Bergman's Question 3.1 retains the finite conclusion of his
Theorem 1.3 for product decompositions. His subsequent discussion
replaces finite generation by finite cogeneration to obtain finite
projected embeddings, but leaves the compatibility of their
composites unresolved. Theorem~\ref{counterexample} excludes
the proposed conclusion for every finite choice and every allowed
summand $Y$. Its factors have finite length and satisfy both
finiteness conditions. The proof uses
$\Soc P/PJ\cong F/F_0$ and the vanishing of the corresponding
quotient on finite blocks and their summands. Positive results
require additional hypotheses, as in
Theorem~\ref{product-comparison}.

\begin{corollary}\label{bergman-answer}
The unrestricted direct-product analogue of
\cite[Theorem 1.3]{Bergman}, as asked in
\cite[Question 3.1]{Bergman}, fails with finitely generated
distinguished factors. It also fails after replacing finite
generation by finite cogeneration. Both failures occur with
all distinguished and family factors nonzero and of finite
length.
\end{corollary}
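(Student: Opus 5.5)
The corollary follows from Theorem~\ref{counterexample}; the plan is to match its data to the hypotheses and conclusion of \cite[Question 3.1]{Bergman}. Take $M=P$, $A=L$, $B=L^2$, $\Lambda=\Delta=\N$ and $C_n=D_n=E_n$. Iterating $P\cong L\oplus P$ gives
\[
A\times\prod_n C_n=L\oplus P\cong P\cong L^2\oplus P=B\times\prod_n D_n,
\]
so these are product decompositions of the form \eqref{intro-products}.

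Next I verify the finiteness hypotheses for every factor. $L=R/J$ is simple, $L^2$ has length two, and each $E_n$ is cyclic and uniserial of length two. All of these are therefore finitely generated. Their socles have finite length and are essential, so all of them are also finitely cogenerated. Every factor is nonzero and of finite length. Thus the hypotheses hold both in Bergman's original form (finitely generated distinguished factors $A,B$ and family factors $D_j$) and in the variant that replaces finite generation by finite cogeneration.

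Finally, suppose a conclusion of the form \eqref{intro-comparison} held for some finite $\Lambda_0,\Delta_0$ and some $Y\mid\prod_{i\in\Lambda_0}C_i$. Then
\[
L\oplus Y\cong L^2\oplus\prod_{j\in\Delta_0}E_j.
\]
This is exactly \eqref{finite-failure} with $a=1$, $b=2$, $I_0=\Lambda_0$ and $J_0=\Delta_0$, which Theorem~\ref{counterexample} excludes.

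The only step needing care is checking that the quantifiers in Bergman's question match those in the theorem: $Y$ ranges over all summands of the finite $C$-block, and $\Delta_0$ is arbitrary finite. Since \eqref{finite-failure} is proved for every such choice, no conclusion of the required shape exists. Both failures therefore occur with the same example.
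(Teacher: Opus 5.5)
Your proposal is correct and is essentially the paper's own proof. Like the paper, you set $A=L$, $B=L^2$ and $C_n=D_n=E_n$ in Theorem~\ref{counterexample}, check that every factor is nonzero, finitely generated, finitely cogenerated and of finite length, and then invoke \eqref{finite-failure} with $a=1$, $b=2$ to exclude every finite comparison, for every allowed summand $Y$.
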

\begin{proof}
In Theorem~\ref{counterexample}, take $A=L$, $B=L^2$ and
$C_n=D_n=E_n$. All these factors are finitely generated and
finitely cogenerated. The product isomorphism holds, while
\eqref{finite-failure} excludes every possible finite
comparison, including every allowed summand $Y$.
\end{proof}

One can also see directly why a finite projected embedding
does not split. Let $\boldsymbol1=(1,1,\ldots)$ and choose
$\lambda:F\to k$ with $\lambda(F_0)=0$ and
$\lambda(\boldsymbol1)=1$. By \eqref{product-action},
\[
P\longrightarrow L,\qquad(a,b)\longmapsto\lambda(b)
\]
is $R$-linear and retracts the copy of $L$ generated by
$(0,\boldsymbol1)$. Its projection into any nonempty finite
$E_{F'}$ is monic, but its image lies in
$\Soc E_{F'}=E_{F'}J$. Every map $E_{F'}\to L$ kills this
image. Thus the projected monomorphism has no retraction.
In particular, the common module in this example is not
quasi-injective, as also follows from
Theorem~\ref{product-comparison}.

\subsection{Finite projections and classical cancellation}
Finite cogeneration passes to submodules and finite direct sums.
If a finitely cogenerated module embeds in a product, projection
into some finite subproduct is monic: apply the zero-intersection
definition to the coordinate kernels. Such a monomorphism need
not split, as the preceding counterexample shows.

\begin{lemma}\label{relative-extension}
Let $M$ be quasi-injective and let $N$ be a direct summand of
$M$. Every homomorphism $L\to N$, where $L\subseteq M$,
extends to a homomorphism $M\to N$. Consequently, every
monomorphism $N\to M$ splits.
\end{lemma}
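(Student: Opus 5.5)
The plan is to reduce everything to the defining property of quasi-injectivity: every homomorphism from a submodule of $M$ into $M$ extends to an endomorphism of $M$.

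\textbf{Extension.} Write $M=N\oplus N'$, let $\iota:N\to M$ be the inclusion and $\pi:M\to N$ the projection along $N'$, so that $\pi\iota=\id_N$. Given $L\subseteq M$ and $f:L\to N$, I will consider the composite $\iota f:L\to M$. Since $L$ is a submodule of $M$ and $M$ is quasi-injective, $\iota f$ extends to some $g\in\End(M)$. Then $h=\pi g:M\to N$ is a homomorphism, and for $x\in L$,
\[
h(x)=\pi g(x)=\pi\iota f(x)=f(x),
\]
so $h$ extends $f$. The only point needing care is that quasi-injectivity is stated for maps into $M$, which is why we pass through $\iota$ and then project back with $\pi$.

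\textbf{Splitting.} Let $\mu:N\to M$ be a monomorphism and put $L=\mu(N)\subseteq M$. The inverse isomorphism $\mu^{-1}:L\to N$ is a homomorphism from a submodule of $M$ into the summand $N$. By the first part it extends to some $r:M\to N$. Then $r\mu=\id_N$, so $\mu$ splits and $M=\mu(N)\oplus\ker r$.

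No step here is a genuine obstacle. The argument is the standard fact that direct summands of a quasi-injective module are $M$-injective, and the monomorphism statement is its immediate consequence.
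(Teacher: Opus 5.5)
Your proposal is correct and follows essentially the same route as the paper: compose $f$ with the inclusion of $N$, extend to an endomorphism of $M$ by quasi-injectivity, project back onto $N$, and then apply this extension property to the inverse of $\mu$ on its image $\mu(N)$ to obtain a retraction.
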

\begin{proof}
Include $N$ in $M$, extend to an endomorphism of $M$, and
compose with a projection onto $N$. For the last assertion,
apply this extension property to the inverse map on the image
of the monomorphism.
\end{proof}

A module $U$ is directly finite if $U\cong U\oplus Z$ implies
$Z=0$. A quasi-injective directly finite module has cancellation
by Warfield's theorem \cite[Corollary 2.6]{Warfield}.
In particular, this applies to a quasi-injective module of finite
uniform dimension, and to a finitely cogenerated quasi-injective
module. In the latter case one can also test direct finiteness on
the finite essential socle.

\begin{lemma}[Consequence of Warfield's cancellation theorem]
\label{socle-comparison}
Let $M$ be quasi-injective, and let $U,V$ be direct summands of $M$.
Suppose that $U$ is directly finite and that $U\oplus X$ is a
direct summand of $M$ with the indicated copy of $U$.
If there are monomorphisms
\[
U\xrightarrow f V\xrightarrow g U\oplus X,
\]
then $V\cong U\oplus Y$ for a direct summand $Y$ of $X$.
No condition on $gf$ and no finiteness condition on $V$ are required.
\end{lemma}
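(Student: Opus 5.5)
The plan is to show first that $g$ splits with a complement inside $U\oplus X$, then cancel $U$ using Warfield's theorem. Since $V$ is a direct summand of the quasi-injective module $M$, it is itself quasi-injective. Since $U\oplus X$ is a direct summand of $M$, Lemma~\ref{relative-extension} applies to the monomorphism $g:V\to U\oplus X\subseteq M$ viewed inside $M$. More precisely, the inverse $g(V)\to V$ extends to a map $M\to V$, where $V$ is a summand of $M$. Restricting this extension to $U\oplus X$ gives a retraction of $g$.

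Hence
\[
U\oplus X=g(V)\oplus W
\]
for some $W$, and $V\cong g(V)$. In the same way, $f:U\to V$ is a monomorphism into the summand $V$ of $M$, and $U$ is a summand of $M$. Lemma~\ref{relative-extension}, applied with $N=U$ to the inverse of $f$ on $f(U)\subseteq M$, shows that $f$ splits. Thus
\[
V=f(U)\oplus V'.
\]
Combining the two decompositions gives
\[
U\oplus X\cong U\oplus V'\oplus W.
\]

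Next I cancel $U$. The module $U$ is quasi-injective, as a summand of $M$, and it is directly finite. Warfield's theorem \cite[Corollary 2.6]{Warfield} then says that $U$ cancels from direct sums: $U\oplus A\cong U\oplus B$ implies $A\cong B$. Applied to the displayed isomorphism, this yields
\[
X\cong V'\oplus W.
\]
Put $Y=V'$. Then $Y$ is isomorphic to a direct summand of $X$, and $V\cong U\oplus Y$, which is the claim.

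The main point to check is the cancellation step: Warfield's corollary must really give cancellation of $U$ in arbitrary direct sums, with no finiteness hypothesis on $V'$ or $W$. If the cited form requires all modules involved to be quasi-injective, I would use instead that $U\oplus V'\oplus W\cong U\oplus X$ is a summand of $M$, hence quasi-injective, and so are its summands. This is also where directly finite is used; the splitting steps use only quasi-injectivity of $M$, and no hypothesis on $gf$ or finiteness of $V$ enters.
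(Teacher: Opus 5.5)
Your proposal is correct and follows the paper's proof. You split $f$ and $g$ via Lemma~\ref{relative-extension}, obtain $U\oplus X\cong U\oplus V'\oplus W$, and cancel the quasi-injective, directly finite $U$ by Warfield's theorem. The one cosmetic adjustment is this: to make $Y$ literally a direct summand of $X$, rather than a module isomorphic to one, take the image of $V'$ under the isomorphism $X\cong V'\oplus W$, as the paper does.
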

\begin{proof}
Lemma~\ref{relative-extension} splits both monomorphisms.
Thus, for suitable modules $V_0,Z$,
\[
V\cong U\oplus V_0,\qquad
U\oplus X\cong V\oplus Z\cong U\oplus V_0\oplus Z.
\]
The module $U$ is quasi-injective, being a summand of $M$.
Warfield's theorem therefore cancels $U$ and gives
$X\cong V_0\oplus Z$. Take in $X$ the corresponding copy $Y$ of $V_0$.
\end{proof}

For the second application below, recall that a module is
nonsingular if no nonzero element has an essential right annihilator.
Its uniform dimension is the supremum of the sizes of independent
families of nonzero submodules.

\begin{lemma}\label{nonsingular-finite-projection}
Let $M$ be nonsingular and quasi-injective.
If $N,P$ are direct summands of $M$, the kernel of every map
$N\to P$ is a direct summand of $N$.
Consequently, if $N$ has finite uniform dimension and
$M\cong\prod_{t\in T}M_t$, the projection of $N\subseteq M$
into some finite subproduct is monic.
\end{lemma}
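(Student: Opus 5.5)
We first prove that kernels of maps between summands split, then derive the finite-projection statement.

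\emph{Kernels split.} Let $h:N\to P$ be a homomorphism and put $K=\ker h$. Since $M$ is quasi-injective, so is its summand $N$. Hence $N$ contains a maximal essential extension of $K$ inside $N$, namely a closure $K^c$ of $K$ in $N$, and $K^c$ is a direct summand of $N$. The standard fact is that a closed submodule of a quasi-injective module is a summand: the injective hull of $K^c$ inside $E(N)$ is mapped into $N$ by idempotents of $\End E(N)$, which preserve $N$.

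It then suffices to show that $K$ is closed in $N$, that is, $K^c=K$. Take $y\in K^c$. The right ideal
\[
I=\{r\in R: yr\in K\}
\]
is essential in $R_R$, because $K$ is essential in $K^c$. Then $h(y)I=h(yI)=0$, so $h(y)$ has an essential annihilator in $P\subseteq M$. Nonsingularity of $M$ forces $h(y)=0$, hence $y\in K$. Thus $K=K^c$ is a summand of $N$. This step is the crux, but it is the standard ``kernels of maps into nonsingular modules are closed'' argument.

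\emph{Finite projection.} Suppose $N$ has finite uniform dimension and fix $M\cong\prod_{t\in T}M_t$. Transport the decomposition, so that each finite subproduct $M_F=\prod_{t\in F}M_t$ is a direct summand of $M$. Let $\pi_F:N\to M_F$ be the projection and $K_F=\ker\pi_F$. By the first part applied with $P=M_F$, each $K_F$ is a summand of $N$. Moreover $K_F$ decreases as $F$ grows, and
\[
\bigcap_F K_F=0,
\]
since the coordinate maps are jointly injective.

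Choose $F$ with $K_F$ of minimal uniform dimension; this is possible because the dimensions are finite, bounded by the uniform dimension of $N$. If $K_F\neq0$, pick $0\neq x\in K_F$ and $t$ with $x\notin K_{F\cup\{t\}}$. Then
\[
K_{F\cup\{t\}}\subsetneq K_F,
\]
and both are summands of $N$, so $K_{F\cup\{t\}}$ is a summand of $K_F$ by the modular law. Its complement in $K_F$ is nonzero, so it strictly lowers the uniform dimension, contradicting minimality. Hence $K_F=0$, and $\pi_F$ is monic.

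The only delicate point here is that uniform dimension is additive on direct sums, so a proper summand of a finite-dimensional module has strictly smaller dimension. That is standard.
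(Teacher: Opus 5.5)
Your proof is correct. The finite-projection half is essentially the paper's argument: minimize $\operatorname{udim}K_F$, use the modular law to see that a smaller kernel is a summand of $K_F$, and conclude by additivity and $\bigcap_F K_F=0$. You pick a single new coordinate $t$ where the paper takes an arbitrary finite $G\supseteq F$, which makes no difference.

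For the kernel-splitting step you take a different route. You show that $K=\ker h$ is closed in $N$, using only nonsingularity of the target. You then invoke the structural fact that closed submodules of a quasi-injective module are summands.

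Your one-line justification of that fact is garbled. The correct version is that $N$ is fully invariant in $E(N)$. So the projection of $E(N)$ onto a hull of $K^c$ taken inside $E(N)$ maps $N$ into itself. Hence $N\cap E(K^c)$ is a summand of $N$, and it is an essential extension of $K^c$, so it equals $K^c$. Also, the existence of $K^c$ comes from Zorn's lemma, not from quasi-injectivity.

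The paper instead builds the idempotent by hand. It chooses $L$ with $K\oplus L$ essential in $N$. It extends the inverse of $h|_L$ from $h(L)\subseteq M$ to a map $b:P\to N$ via Lemma~\ref{relative-extension}, and sets $e=bh$. It then checks that $e^2-e$ and $h-he$ vanish on $K\oplus L$, so both are zero by nonsingularity of $N$ and $P$.

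What each route buys:
\begin{itemize}
\item Your version is more conceptual and slightly more general. It needs only $N$ quasi-injective and the target nonsingular, not nonsingularity of $N$. The cost is that it imports the closure theory of quasi-injective modules, which rests on full invariance in the injective hull.
\item The paper's version stays inside its own Lemma~\ref{relative-extension}. It also yields the regularity identity $h=hbh$. Consequently $hb$ is an idempotent of $P$ with image $h(N)$, so the image is a summand of $P$ as well as the kernel being a summand of $N$.
\end{itemize}
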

\begin{proof}
A map with essential kernel and nonsingular target is zero:
if its kernel is $K$, then for each element $x$ of its domain,
the right ideal $\{r:xr\in K\}$ is essential and annihilates
the image of $x$.

Now let $f:N\to P$, put $K=\ker f$, and choose $L\subseteq N$
such that $K\oplus L$ is essential in $N$.
The inverse map $f(L)\to L$ extends to a map $b:P\to N$ by
Lemma~\ref{relative-extension}. Put $e=bf$.
On $K\oplus L$, the map $e$ is the projection onto $L$.
Both $e^2-e:N\to N$ and $f-fe:N\to P$ therefore vanish
on an essential submodule and are zero.
Thus $e$ is idempotent and $\ker e=\ker f$, proving the first assertion.

For a finite $F\subseteq T$, let $K_F$ be the kernel of
$N\to\prod_{t\in F}M_t$.
The target is a direct summand of $M$, so $K_F$ is a summand of $N$.
Choose $F$ for which the finite integer $\operatorname{udim}K_F$
is least. For any finite $G\supseteq F$, the summand $K_G$ of $N$
is contained in $K_F$. Restricting a retraction $N\to K_G$
to $K_F$ shows that $K_G$ is a summand of $K_F$.
Minimality and additivity of finite uniform dimension imply
$K_G=K_F$. Since $\bigcap_GK_G=0$, we obtain $K_F=0$.
\end{proof}

\Needspace{16\baselineskip}
\begin{theorem}\label{product-comparison}
Suppose a quasi-injective module $M$ has product decompositions
\[
A\times\prod_{i\in\Lambda}C_i\cong M\cong
B\times\prod_{j\in\Delta}D_j.
\]
Assume either of the following:
\begin{enumerate}
\item $A,B$, and each $D_j$ are finitely cogenerated;
\item $M$ is nonsingular, and $A,B$, and each $D_j$ have finite
uniform dimension.
\end{enumerate}
There are finite sets $\Lambda_0\subseteq\Lambda$,
$\Delta_0\subseteq\Delta$, and a direct summand
$Y\mid\prod_{i\in\Lambda_0}C_i$ such that
\begin{equation}\label{finite-comparison}
A\oplus Y\cong B\oplus\prod_{j\in\Delta_0}D_j.
\end{equation}
The module $Y$ is quasi-injective. If $M$ is injective, then $Y$ is injective.
\end{theorem}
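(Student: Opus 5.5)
The plan is to reduce everything to Lemma~\ref{socle-comparison} with $U=A$, $V=B\oplus\prod_{j\in\Delta_0}D_j$ and $X=\prod_{i\in\Lambda_0}C_i$. The two monomorphisms $U\to V\to U\oplus X$ will come from projecting into finite subproducts. No compatibility of the composite is needed, which is exactly the gap that Bergman's discussion of finite projected embeddings left open. First I will fix internal decompositions. Transport both product decompositions to $M$ itself, so that
\[
M=A\times\prod_{i\in\Lambda}C_i=B\times\prod_{j\in\Delta}D_j
\]
as internal product decompositions, after replacing the given isomorphisms by their composite. Every finite subproduct such as $A\times\prod_{i\in\Lambda_0}C_i$ or $B\times\prod_{j\in\Delta_0}D_j$ is then a direct summand of $M$. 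Being a summand of a quasi-injective module, it is quasi-injective.

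Step one chooses $\Delta_0$. The summand $A\subseteq M=B\times\prod_jD_j$ is finitely cogenerated in case (i). By the zero-intersection remark preceding Lemma~\ref{relative-extension}, its projection into $B\times\prod_{j\in\Delta_0}D_j$ is monic for some finite $\Delta_0$. In case (ii), $A$ is a summand of finite uniform dimension of the nonsingular quasi-injective $M$, and the second part of Lemma~\ref{nonsingular-finite-projection} gives the same conclusion. This yields $f:A\to V=B\oplus\prod_{j\in\Delta_0}D_j$.

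Step two chooses $\Lambda_0$. The module $V$ is a finite direct sum of finitely cogenerated modules (respectively, of modules of finite uniform dimension), and it is a direct summand of $M$. Applying the same two facts to $V\subseteq M=A\times\prod_iC_i$ gives a finite $\Lambda_0$ and a monomorphism $g:V\to A\oplus\prod_{i\in\Lambda_0}C_i$. Here $U\oplus X=A\oplus\prod_{i\in\Lambda_0}C_i$ is a summand of $M$ containing the indicated copy of $A$, as Lemma~\ref{socle-comparison} requires.

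Step three applies the lemma. It remains to check that $U=A$ is directly finite. The paper notes that a quasi-injective module of finite uniform dimension is directly finite, and a finitely cogenerated module has finite uniform dimension, so this covers both cases. Lemma~\ref{socle-comparison} then gives $B\oplus\prod_{j\in\Delta_0}D_j\cong A\oplus Y$ with $Y$ a direct summand of $\prod_{i\in\Lambda_0}C_i$, which is \eqref{finite-comparison}. Finally, $Y$ is a summand of a summand of $M$. Hence $Y$ is quasi-injective, and if $M$ is injective then $Y$ is injective.

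The main obstacle I expect is bookkeeping rather than new mathematics. Both monomorphisms must land in genuine direct summands of the single quasi-injective module $M$, so that Lemma~\ref{relative-extension} splits them inside Lemma~\ref{socle-comparison}. This is why I insist on internalizing both decompositions in the same $M$ at the outset. In case (ii) one must also check that finite uniform dimension is preserved under the finite sums used in step two.
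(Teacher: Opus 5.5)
Your proposal is correct and follows the paper's proof. Both obtain the monomorphisms $A\to B\oplus\prod_{j\in\Delta_0}D_j\to A\oplus\prod_{i\in\Lambda_0}C_i$ by projecting into finite subproducts, using finite cogeneration in case (i) and Lemma~\ref{nonsingular-finite-projection} in case (ii), then apply Lemma~\ref{socle-comparison} with $U=A$ and deduce the (quasi-)injectivity of $Y$ from its being a summand of $M$; your added bookkeeping only makes explicit what the paper leaves implicit.
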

\begin{proof}
Regard the displayed factors as summands of $M$.
Under the first hypothesis, finite cogeneration gives a finite
$\Delta_0$ such that projection into
$V=B\oplus\prod_{j\in\Delta_0}D_j$ is monic on $A$.
The module $V$ is again finitely cogenerated, so its projection
into $A\oplus X$, for a suitable finite subproduct
$X=\prod_{i\in\Lambda_0}C_i$, is monic.
Under the second hypothesis, Lemma~\ref{nonsingular-finite-projection}
gives the same two monomorphisms, since $A$ and $V$ have finite
uniform dimension. In either case $A$ is directly finite.
Apply Lemma~\ref{socle-comparison} with $U=A$ to obtain $Y$.
The last assertions follow because $Y$ is a summand of $M$.
\end{proof}

The second hypothesis includes modules with zero socle, for example
finite-dimensional rational vector spaces viewed as $\mathbb Z$-modules.
The kernel-splitting argument is the familiar regularity argument
for nonsingular quasi-injective modules; the finite comparison is
an application of this argument and classical cancellation.
The following example shows why direct finiteness alone cannot
replace the finite-projection hypotheses, even for an injective
common module.

\begin{example}\label{prufer-boundary}
Fix a prime $p$, let $C=\mathbb Z(p^\infty)$ be the Pr\"ufer $p$-group,
and put $P=C^{\mathbb N}$.
There are decompositions
\[
C\oplus P\cong P\cong\mathbb Q\oplus P,
\]
but no finite sets $F,G\subseteq\mathbb N$ and no summand
$Y\mid C^F$ satisfy
\[
C\oplus Y\cong\mathbb Q\oplus C^G.
\]
The same failure holds after interchanging $C$ and $\mathbb Q$.
All displayed modules are injective; $C$ and $\mathbb Q$ are
uniform and hence directly finite. Each family factor $C$ is
finitely cogenerated. Thus neither distinguished factor in
the first hypothesis of Theorem~\ref{product-comparison} can be
replaced merely by a directly finite module, or even by a uniform
injective module, while retaining the other hypotheses.
\end{example}
\begin{proof}
The group $P$ is divisible. Its torsion subgroup $T$ is divisible
as well: given $n\geq1$ and $t\in T$, choose $y\in P$ with
$ny=t$. If $mt=0$, then $mny=0$, so $y\in T$.
Hence $T$ is injective and splits off $P$.
The quotient $V=P/T$ is torsion-free and divisible, and therefore
is a rational vector space.

Partition $\mathbb N$ into infinite subsets $I_m$.
Choose $c_n\in C$ of order $p^n$ and let $z_m\in P$ have
$n$th coordinate $c_n$ for $n\in I_m$, and zero otherwise.
Any nonzero finite integral linear combination of the $z_m$
has coordinates of unbounded order, and is therefore not torsion.
Consequently the classes $z_m+T$ are rationally independent in $V$.
Thus $\dim_{\mathbb Q}V$ is infinite, so
$V\cong\mathbb Q\oplus V$ and $P\cong\mathbb Q\oplus P$.
Separating one coordinate gives $P\cong C\oplus P$.

Every finite power of $C$, and each of its subgroups, is torsion.
Hence $C\oplus Y$ is torsion whereas $\mathbb Q\oplus C^G$ is not.
Interchanging the two distinguished factors gives the reverse
torsion comparison. Finally $\Soc C$ is the unique subgroup of
order $p$ and is essential in $C$, so $C$ is finitely cogenerated;
$\Soc\mathbb Q=0$. Both groups are uniform divisible groups,
which proves the remaining assertions.
\end{proof}

\section{Beyond modules: Bergman's Question 3.2}\label{other-structures}
We now consider the other-structures question. The ordinary
group version for finite-support tuples is already noted in
\cite[Section 3]{Bergman}.
We give a categorical formulation, a version for pointed
algebras with a group reduct, and a semidirect-product
formulation with all normal factors and actions specified.
The finite-factorization assumptions in these results are
part of their hypotheses.

\subsection{An additive formulation and its product dual}

An additive category is \emph{idempotent-complete} if each idempotent
endomorphism splits. The following elementary lemma isolates the
categorical content of the argument.

\begin{lemma}[Nested split monomorphisms]\label{lem:sandwich}
Let $\C$ be an idempotent-complete additive category. Suppose
\[
 A\xrightarrow{a}E\xrightarrow{b}A\oplus X,
 \qquad ba=\iota_A,
\]
and $b$ has a left inverse. Then $E\cong A\oplus Y$ for some direct
summand $Y$ of $X$.
\end{lemma}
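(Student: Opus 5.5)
The plan is to work entirely with idempotents on $A\oplus X$, so that no cancellation is needed. Let $c$ be a left inverse of $b$, so $cb=\id_E$, and put $e=bc\in\End(A\oplus X)$. Then $e$ is idempotent, and $b$ and $c$ restrict to mutually inverse isomorphisms between $E$ and the image of $e$, which exists because $\C$ is idempotent-complete. So it suffices to show that the image of $e$ is isomorphic to $A\oplus Y$ for some direct summand $Y$ of $X$.

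The key observation is that $e$ fixes the copy of $A$: $e\iota_A=bcba=ba=\iota_A$. Write $e$ as a matrix relative to $A\oplus X$. The first column is then $(\id_A,0)^{T}$, so
\[
e=\begin{pmatrix}\id_A & p\\ 0 & q\end{pmatrix},\qquad p:X\to A,\quad q:X\to X .
\]
Expanding $e^2=e$ gives $q^2=q$ and $p+pq=p$, that is, $pq=0$. Now set $f=\begin{pmatrix}\id_A&0\\0&q\end{pmatrix}$, again an idempotent. A direct matrix computation, using $q^2=q$ and $pq=0$, gives
\[
ef=f,\qquad fe=e .
\]

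The step needing care is turning these two identities into an isomorphism of images in an abstract category. Here is how I would do it. Split $e=\iota_1\pi_1$ and $f=\iota_2\pi_2$ with $\pi_k\iota_k=\id$, where $\iota_1,\iota_2$ are the inclusions of the images $\operatorname{im}e,\operatorname{im}f$. Define $u=\pi_1\iota_2:\operatorname{im}f\to\operatorname{im}e$ and $v=\pi_2\iota_1:\operatorname{im}e\to\operatorname{im}f$. Then
\[
uv=\pi_1\iota_2\pi_2\iota_1=\pi_1 f\iota_1=\pi_1 fe\,\iota_1=\pi_1e\iota_1=\id,
\]
using $fe=e$ and $e\iota_1=\iota_1$. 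Symmetrically, $vu=\pi_2 ef\iota_2=\pi_2f\iota_2=\id$.

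Finally, split the idempotent $q$ of $X$ as $X\cong Y\oplus Y'$ with $Y$ the image of $q$. Then $f=\id_A\oplus q$ has image $A\oplus Y$, and $Y$ is a direct summand of $X$. Combining the steps,
\[
E\cong\operatorname{im}e\cong\operatorname{im}f\cong A\oplus Y .
\]
The hypothesis $ba=\iota_A$ is used only to force the lower-left entry of $e$ to be $0$ and its upper-left entry to be $\id_A$; this triangular form is what makes the conjugation-free comparison with $f$ possible.
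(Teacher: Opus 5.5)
Your proof is correct, and it takes a different route from the paper's.

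The paper decomposes $E$ itself. With $r=\pi_Ab$ one has $ra=\id_A$, and splitting $\id_E-ar=\iota_Y\pi_Y$ gives $E\cong A\oplus Y$. In this decomposition $b=\id_A\oplus j$ with $j=\pi_Xb\,\iota_Y$: the first column comes from $ba=\iota_A$, the second from the fact that $r$ vanishes on $Y$. The left inverse $c$ enters only at the end, where its lower-right block $c_{22}=\pi_Yc\,\iota_X$ is a left inverse of $j$.

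You instead use $c$ from the start and work entirely in $\End(A\oplus X)$. The idempotent $e=bc$ is made triangular by $e\iota_A=\iota_A$, and you compare it with $f=\id_A\oplus q$ via $ef=f$ and $fe=e$. In module language these two relations say $\operatorname{im}e=\operatorname{im}f$, that is, $b(E)=A\oplus\bigl(b(E)\cap X\bigr)$. So your argument is the categorical form of the modular-law calculation the paper uses later in Lemma~\ref{lem:variety}.

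The two constructions produce the same summand of $X$: since $\pi_X\iota_A=0$,
\[
jc_{22}=\pi_Xb(\id_E-ar)c\,\iota_X=\pi_Xbc\,\iota_X=q.
\]
The paper's version separates the hypotheses more sharply, since the decomposition of $E$ and the diagonal form of $b$ use only $ba=\iota_A$. Yours needs the small auxiliary fact that idempotents with $ef=f$ and $fe=e$ have isomorphic images, which you verify correctly. In return, it keeps every computation in a single endomorphism ring and names the summand of $X$ explicitly as the image of $q=\pi_Xbc\,\iota_X$.
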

\begin{proof}
Set $r=\pi_A b$. Then $ra=\id_A$.
Splitting the idempotent $\id_E-ar$ gives $E\cong A\oplus Y$.
In this decomposition $b$ has block form
\[
 b=\begin{pmatrix}\id_A&0\\0&j\end{pmatrix}:A\oplus Y\longrightarrow A\oplus X.
\]
Indeed, $ba=\iota_A$ gives the first column, and $r$ vanishes on $Y$.
If $c$ is a left inverse of $b$, its lower-right block is a left inverse
of $j$. Thus $j:Y\to X$ is a split monomorphism. Splitting the associated
idempotent on $X$ shows that $Y$ is a direct summand of $X$.
\end{proof}

\begin{definition}
In an additive category with set-indexed coproducts, an object $K$ is
\emph{small} if every morphism $K\to\bigoplus_{t\in T}X_t$
factors through a finite subcoproduct. Equivalently, the canonical map
$\bigoplus_t\Hom(K,X_t)\to\Hom(K,\bigoplus_tX_t)$ is an isomorphism.
\end{definition}

\begin{theorem}\label{thm:additive}
Let $\C$ be an idempotent-complete additive category with set-indexed
coproducts. Suppose
\[
 A\oplus\bigoplus_{i\in I}C_i
 \cong B\oplus\bigoplus_{j\in J}D_j,
\]
where $A,B$, and all $D_j$ are small.
Then finite subsets $I_0\subseteq I$, $J_0\subseteq J$, and
$Y\mid\bigoplus_{i\in I_0}C_i$ satisfy
\[
 A\oplus Y\cong B\oplus\bigoplus_{j\in J_0}D_j.
\]
No abelian-category hypothesis is required; no exchange property
is assumed for this finite conclusion.
\end{theorem}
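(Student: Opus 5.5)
Fix an isomorphism and identify $M=A\oplus\bigoplus_{i\in I}C_i=B\oplus\bigoplus_{j\in J}D_j$ with its two coproduct structures. The plan is to reduce to Lemma~\ref{lem:sandwich}, exhibiting $E=B\oplus\bigoplus_{j\in J_0}D_j$ sandwiched between $A$ and $A\oplus X$ with $X=\bigoplus_{i\in I_0}C_i$. Write $\iota_A:A\to M$, $\pi_A:M\to A$ for the structure maps of the first decomposition. For finite $J_0\subseteq J$ write $\iota_E:E\to M$ and $\pi_E:M\to E$ for the canonical inclusion and projection of the finite subcoproduct. These exist because a finite coproduct of summands of a coproduct is a summand.

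\textbf{Choosing $J_0$.} Since $A$ is small, the composite $\iota_A:A\to M=B\oplus\bigoplus_j D_j$ factors through $B\oplus\bigoplus_{j\in J_0}D_j$ for some finite $J_0$. We may regard $B$ as a one-element coproduct, or use that $B\oplus\bigoplus_{j\in J}D_j$ is itself a coproduct indexed by $\{*\}\cup J$. Thus $\iota_A=\iota_E\,a$ with $a=\pi_E\iota_A:A\to E$.

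\textbf{Choosing $I_0$.} The object $E$ is small, being a finite coproduct of small objects: $\Hom$ out of a finite coproduct is a finite product of the $\Hom$'s, and these commute with the direct-sum comparison map. So $\iota_E:E\to M=A\oplus\bigoplus_i C_i$ factors through $A\oplus X$ for some finite $I_0$. Write $\iota_E=\iota_{A\oplus X}\,b$ with $b=\pi_{A\oplus X}\iota_E$.

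Now I verify the hypotheses of Lemma~\ref{lem:sandwich}. First,
\[
ba=\pi_{A\oplus X}\iota_E\pi_E\iota_A=\pi_{A\oplus X}\iota_A,
\]
which is the canonical inclusion of $A$ into $A\oplus X$. Second, $b$ has left inverse $\pi_E\iota_{A\oplus X}$, because
\[
\pi_E\iota_{A\oplus X}b=\pi_E\iota_{A\oplus X}\pi_{A\oplus X}\iota_E=\pi_E\iota_E=\id_E.
\]
Here the middle step uses that $\iota_E$ factors through $A\oplus X$, so $\iota_{A\oplus X}\pi_{A\oplus X}\iota_E=\iota_E$. Lemma~\ref{lem:sandwich} then gives $E\cong A\oplus Y$ with $Y\mid X$, which is the claim.

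The only point needing care is the bookkeeping that the idempotent $\iota_{A\oplus X}\pi_{A\oplus X}$ fixes anything that factors through $A\oplus X$. This needs the canonical inclusion/projection pairs of finite subcoproducts inside an arbitrary coproduct in an additive category. Concretely, the projection onto the finite subcoproduct is induced by the universal property, mapping the summands outside $I_0$ to zero. I expect this to be routine rather than a real obstacle. No abelian or exchange hypothesis enters, as the statement asserts.
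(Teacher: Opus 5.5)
Your proposal is correct and is essentially the paper's proof. You choose $J_0$ by smallness of $A$ and $I_0$ by smallness of the finite sum $E$, use the left inverse $\pi_E\iota_{A\oplus X}$ (the paper's $pf$), and conclude with Lemma~\ref{lem:sandwich}. The only cosmetic difference is that you obtain $ba=\iota_A$ by composing with the projection $\pi_{A\oplus X}$, whereas the paper cancels the split monomorphism $f$; these are the same fact.
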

\begin{proof}
Use the given isomorphism to regard both decompositions as
decompositions of an object $M$.
The factorization property of $A$ gives a finite $J_0$ such that its inclusion
$u:A\to M$ factors through the split inclusion
$e:E=B\oplus\bigoplus_{j\in J_0}D_j\to M$; write $u=ea$.
Finite sums of small objects are small. Applying the factorization property to $E$
gives a finite $I_0$ and a factorization
\[
 e=fb,\qquad
 f:F=A\oplus\bigoplus_{i\in I_0}C_i\longrightarrow M
\]
with $f$ the canonical split inclusion. Since $fba=u=f\iota_A$ and $f$
is monic, $ba=\iota_A$. If $p:M\to E$ retracts $e$, then $pf$ is a left
inverse of $b$. Apply Lemma \ref{lem:sandwich} with
$X=\bigoplus_{i\in I_0}C_i$.
\end{proof}

\begin{remark}
For modules, this property is called ``dually slender''
or ``small'' in Section 3 of \cite{Bergman}; finite generation implies it.
For torsion-free modules over a commutative domain, finite-rank modules
are small in the category of torsion-free modules. To see this,
choose finitely many elements spanning after extension to the fraction
field and collect the finite supports of their images. For any other
element $x$, a nonzero scalar multiple of $x$ is a linear combination
of those chosen elements. Torsion-freeness of the target coordinates
forces the image of $x$ to vanish outside the collected supports.
That category has coproducts and split idempotents, so the theorem also
unifies the finite-rank version already noted by Bergman.
\end{remark}

\begin{corollary}[Product dual]\label{cor:dual}
Let $\C$ be an idempotent-complete additive category with set-indexed
products. Assume that $A,B$, and each $D_j$ have the following property:
every morphism from any product $\prod_t X_t$ to that object factors
through a finite subproduct. If
\[
 A\times\prod_iC_i\cong B\times\prod_jD_j,
\]
then finite $I_0,J_0$ and $Y\mid\prod_{i\in I_0}C_i$ give
$A\oplus Y\cong B\oplus\prod_{j\in J_0}D_j$.
\end{corollary}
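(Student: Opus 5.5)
The plan is to dualize Theorem~\ref{thm:additive}: pass to the opposite category and feed the result to that theorem. If $\C$ is additive, idempotent-complete and has set-indexed products, then $\C^{\mathrm{op}}$ is additive, idempotent-complete (an idempotent splits in $\C$ exactly when it splits in $\C^{\mathrm{op}}$) and has set-indexed coproducts, namely the products of $\C$. The hypothesis on $A,B,D_j$ says that every morphism $\prod_tX_t\to K$ in $\C$ factors through a finite subproduct, that is, through a projection $\prod_tX_t\to\prod_{t\in T_0}X_t$. In $\C^{\mathrm{op}}$ this reads: every morphism $K\to\coprod_tX_t$ factors through the canonical inclusion of a finite subcoproduct. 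This is precisely the definition of a small object in $\C^{\mathrm{op}}$.

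First I would fix one point: ``factors through a finite subproduct'' should mean factoring through the canonical projection onto $\prod_{t\in T_0}X_t$. That projection is split, so under duality it becomes the canonical split inclusion, which is what the proof of Theorem~\ref{thm:additive} uses. With this understood, the isomorphism $A\times\prod_iC_i\cong B\times\prod_jD_j$ in $\C$ is an isomorphism $A\oplus\coprod_iC_i\cong B\oplus\coprod_jD_j$ in $\C^{\mathrm{op}}$, with $A$, $B$ and all $D_j$ small there.

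Theorem~\ref{thm:additive} applied in $\C^{\mathrm{op}}$ then gives finite $I_0,J_0$ and a direct summand $Y$ of $\coprod_{i\in I_0}C_i$ in $\C^{\mathrm{op}}$, together with an isomorphism $A\oplus Y\cong B\oplus\coprod_{j\in J_0}D_j$ in $\C^{\mathrm{op}}$. The last step is to translate back. Finite biproducts are self-dual in an additive category, so the finite coproducts in $\C^{\mathrm{op}}$ are the finite products, equivalently the direct sums, in $\C$. Being a direct summand means being the image of a split idempotent, a self-dual notion, so $Y\mid\prod_{i\in I_0}C_i$ in $\C$. An isomorphism in $\C^{\mathrm{op}}$ is an isomorphism in $\C$. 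This yields $A\oplus Y\cong B\oplus\prod_{j\in J_0}D_j$.

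I expect no real obstacle. The only point needing care is the one above: checking that the dualized smallness condition matches the definition of smallness exactly, including the use of canonical structure maps, since the proof of Theorem~\ref{thm:additive} uses that $f$ is monic and split. Both properties hold for the projection in $\C$ and hence for the inclusion in $\C^{\mathrm{op}}$. If a direct check were preferred, the alternative would be to rerun the proof of Theorem~\ref{thm:additive} with all arrows reversed, using the dual of Lemma~\ref{lem:sandwich} for split epimorphisms. The opposite-category argument gives this automatically.
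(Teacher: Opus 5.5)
Your proposal is correct and matches the paper's proof: apply Theorem~\ref{thm:additive} in $\C^{\mathrm{op}}$, where the stated factorization property for $A$, $B$ and the $D_j$ is exactly smallness. The additional checks you supply are the routine details the paper leaves implicit: self-duality of idempotent completeness, of finite biproducts and of direct summands, and factoring through the canonical projection.
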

\begin{proof}
Apply Theorem \ref{thm:additive} in $\C^{\mathrm{op}}$.
The condition in the statement is exactly smallness there.
\end{proof}

\begin{corollary}[Continuous product version]\label{cor:topology}
Let $R$ be discrete. Give the modules $A,B,C_i,D_j$ the discrete
topology and the infinite products their product topologies. If
\[
 A\times\prod_iC_i\cong B\times\prod_jD_j
\]
by an $R$-linear homeomorphism, then the finite conclusion of
Corollary \ref{cor:dual} holds, with $Y$ a topological direct summand of
the finite product. No generation or cogeneration hypotheses are needed.
\end{corollary}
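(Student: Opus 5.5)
The plan is to apply Corollary~\ref{cor:dual} in the additive category $\C$ of Hausdorff topological $R$-modules with continuous $R$-linear maps. Here $R$ is discrete and the module action is required to be continuous. First, $\C$ has set-indexed products: the product topology on $\prod_tX_t$ gives the categorical product, since a map into it is continuous iff every coordinate is. Second, $\C$ is idempotent-complete: if $e$ is a continuous idempotent on $X$, then $X\cong\ker e\times\ker(1-e)$ topologically. The map $x\mapsto(x-ex,ex)$ is a continuous bijection with continuous inverse $(y,z)\mapsto y+z$. The kernels are closed subspaces with the subspace topology. So $e$ splits, and the splittings are topological direct summands. The given $R$-linear homeomorphism is then an isomorphism $A\times\prod_iC_i\cong B\times\prod_jD_j$ in $\C$.

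The key step is the finiteness hypothesis of Corollary~\ref{cor:dual}. Let $K$ be discrete; we need every continuous $R$-linear $\varphi:\prod_tX_t\to K$ to factor through a finite subproduct. Continuity at $0$ makes $\varphi^{-1}(0)$ open. So it contains a basic neighbourhood $\prod_{t\notin F}X_t\times\prod_{t\in F}U_t$ of $0$, with $F$ finite and $U_t\ni0$ open. In particular $\varphi$ vanishes on the subgroup $\prod_{t\notin F}X_t$, viewed as the elements with zero coordinates in $F$. Since $\varphi$ is additive, it factors as $\prod_tX_t\to\prod_{t\in F}X_t\to K$ through the projection. The induced map on the finite product is the restriction of $\varphi$ to the summand $\prod_{t\in F}X_t$, hence continuous. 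Apply this with $K$ equal to $A$, $B$ and each $D_j$, which are discrete by hypothesis.

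Corollary~\ref{cor:dual} then yields finite $I_0,J_0$ and $Y\mid\prod_{i\in I_0}C_i$ in $\C$ with $A\oplus Y\cong B\oplus\prod_{j\in J_0}D_j$ in $\C$. A summand in $\C$ is a topological direct summand, and the isomorphism is an $R$-linear homeomorphism. All modules here are finite products of discrete modules, so they are discrete, and $Y$ is a discrete summand.

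The main obstacle is bookkeeping about the category rather than any algebra. One must check that finite products and biproducts agree, that idempotent splitting stays inside $\C$, and that the dual argument of Corollary~\ref{cor:dual} only uses projections and left/right inverses that are continuous. Working with Hausdorff modules and open kernels as above covers each of these points.
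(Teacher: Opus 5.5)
Your proposal is correct and follows essentially the same route as the paper. Both work in the additive category of Hausdorff topological $R$-modules, split a continuous idempotent using $e$ and $1-e$, and get the hypothesis of Corollary~\ref{cor:dual} from the open kernel of a continuous map to a discrete module, which must contain an entire tail product. You also spell out details the paper leaves implicit: continuity of the induced map on the finite subproduct, and discreteness of $Y$.
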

\begin{proof}
Work in the additive category of Hausdorff topological $R$-modules and
continuous homomorphisms. It has products and split idempotents:
a continuous idempotent $e$ splits a module as
$\im e\times\ker e$, with inverse coordinates $e$ and $1-e$.
Every continuous map from a product to a discrete module has an open
kernel. A basic product neighborhood in that kernel restricts only
finitely many coordinates, so the entire complementary tail product
is killed. The map therefore factors continuously through the indicated
finite subproduct. All the distinguished discrete modules have the
factorization property in Corollary \ref{cor:dual}.
\end{proof}

\begin{remark}
In Theorem~\ref{counterexample}, with $a\ne b$, every algebraic
isomorphism $S^a\oplus P\cong S^b\oplus P$ fails to be a homeomorphism
for these product topologies. Otherwise Corollary \ref{cor:topology}
would contradict \eqref{finite-failure}. In particular, the retraction $P\to S$ constructed at the end of
Section~\ref{counterexample-section} is discontinuous when $S$ is discrete.
This identifies a sufficient additional hypothesis that restores finite
stabilization without confusing it with finite cogeneration.
\end{remark}

\subsection{Pointed algebras with a group reduct}

We specify a broad class in which the internal, stronger conclusion holds.
Let $\mathcal V$ be a finitary variety whose signature contains group
operations, with group identity denoted by $1$, and whose additional
fundamental operations satisfy
$\omega(1,\ldots,1)=1$. Additional constants, if present, equal $1$.
Thus each algebra has an underlying group, and the one-element algebra
is a subalgebra. For additive group structures the symbol $1$ here is
replaced by $0$.

The restricted direct product $\bigoplus_i C_i$ means the subalgebra of
$\prod_i C_i$ consisting of finite-support tuples. It is closed under all
operations because they are finitary and preserve the identity.
For nonabelian groups this notation is not their categorical coproduct.

The modular law below is the group-reduct specialization of
Crawley--J\'onsson \cite[Lemma 2.3, p.803]{CJ}. Their finitary,
zero-preserving framework also includes these finite-support
products. We use this classical law for a finite-stabilization
conclusion; we do not remove exchange hypotheses from their
stronger refinement theorems.

\begin{lemma}\label{lem:variety}
In such a variety, suppose
$M=B\times V$ and $F=A\times X$ are internal decompositions, with
$A\leq B\leq F\leq M$. Put
\[
 Y=X\cap B,\qquad Z=X\cap(A\times V).
\]
Then $B=A\times Y$ and $X=Y\times Z$ as internal decompositions
of algebras.
\end{lemma}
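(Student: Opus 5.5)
We reduce everything to the underlying groups, using the modular law of \cite[Lemma 2.3]{CJ} in its group form. The point is that in such a variety an internal direct decomposition $M=B\times V$ is determined by its factors: $B$ and $V$ are normal subalgebras with $B\cap V=1$ and $BV=M$, and they commute elementwise. Conversely, for two subalgebras $H,K$ meeting in $1$, commuting elementwise, with $HK$ equal to the ambient algebra, the product map $H\times K\to HK$ is an isomorphism of algebras. The reason is that each additional operation $\omega$ satisfies
\[
\omega(h_1k_1,\ldots,h_nk_n)=\omega(h_1,\ldots,h_n)\,\omega(k_1,\ldots,k_n).
\]
This holds because the decomposition $M=B\times V$ is a product of algebras and $H,K$ sit inside the factors. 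So it suffices to check the group-theoretic conditions, and that the intersections $Y,Z$ are subalgebras. The latter is clear, since intersections of subalgebras are subalgebras.

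\textbf{Step 1: $B=A\times Y$.} Inside $F=A\times X$ we have $A\cap Y\subseteq A\cap X=1$, and elements of $A$ commute with elements of $Y\subseteq X$. Given $b\in B\subseteq F$, write $b=ax$ with $a\in A$ and $x\in X$. Since $A\leq B$, we get $x=a^{-1}b\in X\cap B=Y$. Hence $B=AY$. Operations respect the factorization because $A\times Y$ sits inside the product decomposition $A\times X$ as $A\times(X\cap B)$, and $Y$ is a subalgebra of $X$.

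\textbf{Step 2: $X=Y\times Z$.} First, $Y\cap Z\subseteq B\cap(A\times V)\cap X$. Now $B\cap AV=A(B\cap V)=A$ by Dedekind's law in $M$, using $A\leq B$ and $B\cap V=1$. So $Y\cap Z\subseteq A\cap X=1$.

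Generation: take $x\in X$ and write $x=bv$ in $M=B\times V$ with $b\in B$ and $v\in V$. By Step 1, $b=ay$ with $a\in A$ and $y\in Y$, so $x=ayv$. Because $Y$ and $A$ commute elementwise, $x=y(av)$, and then $av=y^{-1}x\in X\cap AV=Z$. Hence $X=YZ$.

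Commutation: $Y\leq B$ and $Z\leq AV$. Elements of $V$ commute with those of $B$, and elements of $A$ commute with those of $Y\subseteq X$. Hence $Y$ and $Z$ commute elementwise, which also gives normality of both in $X$.

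The main obstacle is making sure the operations, not just the group structure, split along $Y\times Z$. For this we use that $X$ is a factor of the algebra product $F=A\times X$, so operations on $F$ are computed coordinatewise. We also use that elements of $Z$ have the form $av$, and $\omega$ acts coordinatewise on $B\times V$ and on $A\times Y$. Writing $z_i=a_iv_i$ and $y_i$, one computes
\[
\omega(y_1z_1,\ldots)=\omega(a_1y_1v_1,\ldots)=\omega(a_i)\,\omega(y_i)\,\omega(v_i).
\]
Here $\omega(y_i)\in Y$, and $\omega(a_i)\omega(v_i)=\omega(a_iv_i)\in Z$ since $Z$ is a subalgebra. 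Rearranging with the commutation relations then yields $\omega(y_i)\,\omega(z_i)$. This is exactly the verification carried out in \cite[Lemma 2.3]{CJ}, which we may cite for the general finitary zero-preserving setting.
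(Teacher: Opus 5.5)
Your argument is correct. Step~1 is exactly the paper's argument: from $A\le B\le A\times X$ one gets $B=A\times(X\cap B)$, and the operations are respected because this decomposition is the restriction of the product isomorphism to subalgebras.

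The routes differ for $X=Y\times Z$. You work by hand: Dedekind's law gives $Y\cap Z=1$, the factorization $x=y(av)$ gives $X=YZ$, you check elementwise commutation, and you verify explicitly that each $\omega$ splits along $YZ$. The paper instead reassociates, $M=B\times V=(A\times Y)\times V=Y\times(A\times V)$ as algebras, and applies the same observation a second time to $Y\le X\le M$. Trivial intersection, generation and compatibility with the extra operations then come for free, since $Z=X\cap(A\times V)$ is exactly the complementary factor produced. Your route is longer but makes the commutation relations visible. The paper's route needs only one principle, used twice.

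Your opening paragraph needs a correction. The general ``converse'' is false: elementwise-commuting subalgebras with trivial intersection whose product is the whole algebra need not give an algebra product. For example, in $k[x]/(x^2)$ regarded as a nonunital ring, $k\cdot1$ and $kx$ are such subrings, but $(1+0)(0+x)=x\ne0=1\cdot0+0\cdot x$. Your justification covers only subalgebras lying in the two factors of a given decomposition, and $Z$ does not lie in $V$. So the sentence ``it suffices to check the group-theoretic conditions'' should be dropped. Your final paragraph supplies the needed operation check directly, so the proof stands.
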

\begin{proof}
First observe that if $P=S\times T$ and $S\leq Q\leq P$, then
$Q=S\times(Q\cap T)$. Indeed, write a member $q$ as $st$; since
$s\in Q$, the group law gives $t=s^{-1}q\in Q$.
The resulting bijection is an algebra homomorphism, being the
restriction of the original product isomorphism to subalgebras.

Apply this observation inside $F=A\times X$ to obtain $B=A\times Y$.
Consequently
$M=B\times V=(A\times Y)\times V=Y\times(A\times V)$
as algebras. Now $Y\leq X\leq M$, so a second application gives
$X=Y\times\bigl(X\cap(A\times V)\bigr)$.
\end{proof}

\begin{theorem}\label{thm:variety}
Suppose an algebra in $\mathcal V$ has internal decompositions
\[
 M=A\times\bigoplus_{i\in I}C_i
   =B\times\bigoplus_{j\in J}D_j,
\]
where $A,B$, and every $D_j$ are finitely generated as algebras.
There are finite sets $I_0,J_0$ and a direct factor
$Y\mid\bigoplus_{i\in I_0}C_i$ such that, internally,
\[
 A\times Y=B\times\bigoplus_{j\in J_0}D_j.
\]
\end{theorem}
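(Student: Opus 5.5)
The plan is to imitate the proof of Theorem~\ref{thm:additive} internally, with Lemma~\ref{lem:variety} playing the role of Lemma~\ref{lem:sandwich}.

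\textbf{Step 1: choosing $J_0$.} Let $a_1,\dots,a_r$ generate $A$. Each $a_s$ is an element of $B\times\bigoplus_jD_j$, so it has finite support in the $D_j$-coordinates. Let $J_0$ be the union of these supports and put
\[
E=B\times\bigoplus_{j\in J_0}D_j .
\]
Then $E$ is a subalgebra of $M$. The complement $V=\bigoplus_{j\notin J_0}D_j$ satisfies $M=E\times V$ internally. We need $A\le E$. Since $E$ is a subalgebra containing the generators of $A$, and $A$ is the subalgebra they generate, this inclusion holds.

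\textbf{Step 2: choosing $I_0$.} The algebra $E$ is finitely generated, because $B$ and each $D_j$ with $j\in J_0$ are, and their generators together generate the internal finite product. Write its finitely many generators in the decomposition $A\times\bigoplus_iC_i$ and let $I_0$ be the union of their $C_i$-supports. Put
\[
X=\bigoplus_{i\in I_0}C_i,\qquad F=A\times X .
\]
Then $F$ is a subalgebra containing the generators of $E$, so $E\le F$. We now have $A\le E\le F\le M$, with $M=E\times V$ and $F=A\times X$ as internal decompositions.

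\textbf{Step 3: applying Lemma~\ref{lem:variety}.} Apply the lemma with its $B$ taken to be $E$. It gives $E=A\times Y$ with $Y=X\cap E$, and $X=Y\times Z$ internally, so $Y$ is a direct factor of $\bigoplus_{i\in I_0}C_i$. This is exactly the internal equality
\[
A\times Y=B\times\bigoplus_{j\in J_0}D_j .
\]

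\textbf{Main obstacle.} The delicate point is making sure that "internal decomposition" and the finite-support bookkeeping are legitimate in a nonabelian setting. Specifically, three things must hold:
\begin{enumerate}
\item The factors $E$ and $V$ commute elementwise, and every element of $M$ factors uniquely as a product of an element of $E$ and an element of $V$, so that $M=E\times V$ really is internal. This holds because the decomposition comes from coordinates of $B\times\bigoplus D_j$, and the operations act coordinatewise on finite-support tuples.
\item The subalgebra generated by finitely many finite-support tuples stays within their combined support. This holds because the operations are finitary and identity-preserving, so coordinates outside the support remain $1$.
\item The regrouping $B\times V$ with $V$ a restricted product of the remaining $D_j$ is valid.
\end{enumerate}
Once these are checked, Lemma~\ref{lem:variety} does all the remaining work.
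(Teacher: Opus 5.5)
Your proposal is correct and follows essentially the same route as the paper's proof. You truncate by generator supports to get $A\le E=B\times\bigoplus_{j\in J_0}D_j\le F=A\times\bigoplus_{i\in I_0}C_i$ with $M=E\times V$, and then apply Lemma~\ref{lem:variety} with $E$ in the role of $B$ and $X=\bigoplus_{i\in I_0}C_i$.
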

\begin{proof}
The supports of finitely many generators show that
$A\leq E=B\times\bigoplus_{j\in J_0}D_j$ for a finite $J_0$.
A finite product of finitely generated algebras here is finitely generated:
take their generators in individual coordinates and use the group law
to form arbitrary finite tuples. Thus $E$ is finitely generated, and
$E\leq F=A\times\bigoplus_{i\in I_0}C_i$ for a finite $I_0$.
Also $M=E\times V$ where $V=\bigoplus_{j\notin J_0}D_j$.
Apply Lemma \ref{lem:variety}, with $E$ playing its role of $B$ and
$X=\bigoplus_{i\in I_0}C_i$.
\end{proof}

\begin{remark}
Examples include groups, groups with specified endomorphisms, Lie
algebras over a fixed commutative ring, and associative or nonassociative
algebras without a specified multiplicative identity. For rings and
algebras in this assertion the additive group supplies the group reduct.
The unital-ring category requires a different formulation, since its
zero-coordinate inclusions need not be unital homomorphisms.
The theorem is a sufficient, explicitly proved answer to Question 3.2;
the group case itself was already observed in \cite{Bergman}.
\end{remark}

\subsection{A semidirect-product formulation for groups}
Here $N\rtimes H$ means that $N$ is normal and $H$ is a
specified complement. The following subgroup calculation replaces
the direct-factor sandwich. It requires normality of $A$ in $F$,
as part of $F=A\rtimes X$, but not in the whole group $M$.

\begin{lemma}[Nested semidirect decompositions]\label{lem:semidirect-sandwich}
Suppose
\[
M=E\rtimes V,\qquad F=A\rtimes X,\qquad A\leq E\leq F\leq M.
\]
Let $p:F\to X$ be projection with kernel $A$. Put
\[
Y=E\cap X,\qquad Z=p(F\cap V)=X\cap AV.
\]
Then $E=A\rtimes Y$ and $X=Y\rtimes Z$.
Here $AV$ denotes the set product.
The actions are inherited from the given decompositions.
\end{lemma}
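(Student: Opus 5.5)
I will mirror the proof of Lemma~\ref{lem:variety}, replacing the direct-factor observation by its semidirect analogue: if $P=S\rtimes T$ (so $S\trianglelefteq P$, $S\cap T=1$, $P=ST$) and $S\leq Q\leq P$, then $Q=S\rtimes(Q\cap T)$. The proof is a Dedekind modular law computation. Writing $q=st$ with $s\in S\leq Q$ gives $t=s^{-1}q\in Q\cap T$. Normality of $S$ in $Q$ is inherited from $P$, and $S\cap(Q\cap T)=1$. The conjugation action of $Q\cap T$ on $S$ is the restriction of the given one, which justifies ``inherited actions''.

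The first step applies this inside $F=A\rtimes X$ with $Q=E$; this is legitimate since $A\leq E\leq F$. It yields $E=A\rtimes(E\cap X)=A\rtimes Y$ at once.

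For the second decomposition I will work directly inside $X$, because $A$ need not be normal in $M$ and so the reshuffling $M=Y\times(A\times V)$ of Lemma~\ref{lem:variety} is unavailable. The needed facts about $Y=E\cap X$ are as follows.
\begin{enumerate}
\item $Y\trianglelefteq X$. I will deduce this from $E\trianglelefteq M$: for $x\in X\leq F\leq M$, conjugation by $x$ preserves both $E$ and $X$, so it preserves $E\cap X$.
\item $Z=p(F\cap V)$ is a subgroup of $X$, being a homomorphic image of the subgroup $F\cap V$. Its equality with $X\cap AV$ comes from $p(v)=a^{-1}v$ for $v=ax\in F\cap V$: this gives $p(F\cap V)\subseteq X\cap AV$, and conversely $x=av\in X$ forces $v=a^{-1}x\in F\cap V$ with $p(v)=x$.
\item $Y\cap Z=1$. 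An element $x=av$ of $Y\cap Z$ lies in $E$, so $v=a^{-1}x\in E\cap V=1$. Then $x=a\in A\cap X=1$.
\item $X=YZ$. Given $x\in X\leq M=EV$, write $x=ev$. Then $v=e^{-1}x\in F$, because $E\leq F$, so $v\in F\cap V$. Write $v=a\,p(v)$; then $x=(ea)\,p(v)$, and $ea=x\,p(v)^{-1}\in X$ while $ea\in E$ since $A\leq E$. Hence $ea\in Y$ and $x\in YZ$.
\end{enumerate}

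These four facts together give $X=Y\rtimes Z$, with the action of $Z$ on $Y$ given by conjugation in $X$, hence inherited from $M$. I expect the only delicate point to be this last factorization, specifically checking that $v$ lands in $F$ and that $ea$ lies simultaneously in $E$ and $X$; everything else is routine subgroup bookkeeping.
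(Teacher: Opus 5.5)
Your proof is correct and follows the paper's argument. The first step is the same modular-law observation inside $F=A\rtimes X$. For $X=Y\rtimes Z$, the paper first writes $F=E\rtimes(F\cap V)$ and pushes this factorization forward along $p$, and your items (iii) and (iv) do exactly this elementwise; your element $ea$ is $p(e)$. The only cosmetic difference is that you obtain $Y\trianglelefteq X$ from $E\trianglelefteq M$, rather than from $E\trianglelefteq F$ and the surjectivity of $p$.
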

\begin{proof}
If $A\leq E\leq A\rtimes X$, writing $e=ax$ shows that
$x=a^{-1}e\in E\cap X$. Hence $E=A\rtimes(E\cap X)$
and $p(E)=Y$.
Likewise, $E\leq F\leq E\rtimes V$ gives
$F=E\rtimes W$, where $W=F\cap V$.
The restriction $p|_W$ is injective, since $W\cap A\leq W\cap E=1$.
If $p(w)\in p(E)$, then $w\in EA=E$, and hence $w=1$.
Applying $p$ to $F=EW$ therefore gives $X=Y\rtimes p(W)$.
Here $Y=p(E)$ is normal in $X$ because $E$ is normal in $F$.
For $w=ap(w)\in W$, we have $p(w)=a^{-1}w\in X\cap AV$.
Conversely, if $z=av\in X\cap AV$, then
$v=a^{-1}z\in F\cap V$ and $z=p(v)$.
Thus $p(W)=X\cap AV$.
\end{proof}

\begin{theorem}\label{thm:semidirect}
Let a group $M$ have internal decompositions
\[
M=A\rtimes\bigoplus_{i\in I}C_i
 =B\rtimes\bigoplus_{j\in J}D_j,
\]
where $A,B\trianglelefteq M$ and $A,B,D_j$ are finitely
generated. Each displayed complement is the restricted direct
product of its factors. There are finite subsets $I_0\subseteq I$, $J_0\subseteq J$ and
subgroups $Y,Z\leq X=\bigoplus_{i\in I_0}C_i$ such that
\begin{equation}\label{eq:semidirect-conclusion}
X=Y\rtimes Z,\qquad
A\rtimes Y=B\rtimes\bigoplus_{j\in J_0}D_j.
\end{equation}
The actions are inherited from $M$.
\end{theorem}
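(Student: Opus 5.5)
The proof will mirror Theorem~\ref{thm:variety}, with Lemma~\ref{lem:semidirect-sandwich} replacing Lemma~\ref{lem:variety}. Choose finite generating sets for $A$, $B$ and the relevant $D_j$. Every element of $M$ is uniquely a product $b\,d$ with $b\in B$ and $d\in\bigoplus_jD_j$ of finite support. Collect the supports of the $\bigoplus_jD_j$-components of the generators of $A$ to obtain a finite $J_0\subseteq J$. Put
\[
E=B\rtimes\bigoplus_{j\in J_0}D_j,\qquad V=\bigoplus_{j\notin J_0}D_j .
\]
Since the restricted product $\bigoplus_{j\in J_0}D_j$ is a subgroup, and $B$ is normal in $M$, the set $E=B\cdot\bigoplus_{j\in J_0}D_j$ is a subgroup. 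It contains the generators of $A$, hence $A\leq E$. Moreover $M=E\rtimes V$: the group $E$ is normal in $M$ because $M/B\cong\bigoplus_jD_j$ and the image of $E$ there is a direct factor, hence normal. We also have $E\cap V=1$ and $EV=M$ by uniqueness of the $B\rtimes\bigoplus D_j$ factorization.

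The group $E$ is finitely generated, generated by the generators of $B$ and of the finitely many $D_j$, $j\in J_0$. Repeating the support argument with the decomposition $M=A\rtimes\bigoplus_iC_i$ gives a finite $I_0$ with
\[
E\leq F=A\rtimes X,\qquad X=\bigoplus_{i\in I_0}C_i .
\]
Here $F$ is a subgroup because $A\trianglelefteq M$, and $A$ is normal in $F$. We now have $M=E\rtimes V$, $F=A\rtimes X$ and $A\leq E\leq F\leq M$. Lemma~\ref{lem:semidirect-sandwich} then gives $Y=E\cap X$ and $Z=X\cap AV$ with $E=A\rtimes Y$ and $X=Y\rtimes Z$. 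This is exactly \eqref{eq:semidirect-conclusion}, with actions inherited from $M$ because all products are internal.

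The main obstacle is verifying that $E$ is normal in $M$ and that $M=E\rtimes V$ holds as an internal semidirect decomposition, since Lemma~\ref{lem:semidirect-sandwich} requires this. The $D_j$ are not assumed normal in $M$, so I will work modulo $B$. Since $B\trianglelefteq M$, the map $\bigoplus_jD_j\to M/B$ is an isomorphism, and $E/B$ corresponds to the restricted product over $J_0$. That subgroup is normal in the restricted direct product, where distinct factors commute. Hence $E/B$ is normal in $M/B$ and $E$ is normal in $M$. Normality of $A$ in $F$ is inherited from $A\trianglelefteq M$, so no other hypotheses are needed.
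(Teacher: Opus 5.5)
Your proposal is correct and follows essentially the same route as the paper's proof. You truncate the supports of the generators of $A$ to obtain $J_0$ and $E$, and then the supports of the generators of $E$ to obtain $I_0$ and $F=A\rtimes X$. You get normality of $E$ and $M=E\rtimes V$ by viewing $E$ as the preimage of a direct factor of $M/B$, and then apply Lemma~\ref{lem:semidirect-sandwich} with $Y=E\cap X$ and $Z=X\cap AV$.
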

\begin{proof}
Finite supports of generators give finite sets $J_0,I_0$ with
\[
A\leq E=B\rtimes\bigoplus_{j\in J_0}D_j
 \leq F=A\rtimes\bigoplus_{i\in I_0}C_i=A\rtimes X.
\]
Here $E$ is finitely generated, being a finite semidirect
product of finitely generated groups.
Put $V=\bigoplus_{j\notin J_0}D_j$.
The subgroup $E$ is normal in $M$, since it is the inverse
image of a direct factor of $M/B$, and $M=E\rtimes V$.
Lemma~\ref{lem:semidirect-sandwich} gives the conclusion with
$Y=E\cap X$ and $Z=X\cap AV$.
\end{proof}

\paragraph{Comparison with Question 3.2.}
Bergman already observes the ordinary group case and asks
whether semidirect versions hold. The finite-support truncations
above are the same as in his proof. Crawley--J\'onsson
\cite[Lemma 2.3]{CJ} supply the modular law for internal direct
factors. Lemma~\ref{lem:semidirect-sandwich} replaces that
calculation by $Y=E\cap X$ and $Z=X\cap AV$; normality gives
$Y\trianglelefteq X$ and preserves the inherited actions.
Theorem~\ref{thm:semidirect} answers the case with normal
factors $A,B$ and restricted direct-product complements.
It makes no claim about arbitrary iterated semidirect products.

\begin{example}\label{ex:semidirect-infinite-actions}
Let
\[
W=\{(w_1,w_2,w_3)\in\mathbb F_5^3:w_1+w_2+w_3=0\}.
\]
The group $G=S_3=N\rtimes K$, with $N=A_3\cong C_3$ and
$K\cong C_2$ generated by a transposition, acts on $W$
by permuting coordinates. Let
\[
T=\bigoplus_{n\geq1}\langle t_n\rangle,\qquad |t_n|=2,
\]
and let each $t_n$ act as $-1$ on $W$.
These scalar actions commute with the $G$-action.
For a further central group $U\cong C_2$, put
\[
M=(W\rtimes(G\times T))\times U,
\qquad A=W\times U,\qquad B=W\rtimes N.
\]
The groups $A$ and $B$ are finite and normal in $M$, with
$A\cap B=W$; neither contains the other. Moreover,
\[
M=A\rtimes(G\times T)=B\rtimes(K\times U\times T).
\]
Both complements are restricted direct products of finite groups.
Choose only $U$ from the second complement and only $G$ from
the first. Then
\[
E=B\times U=A\rtimes N,\qquad X=G=N\rtimes K.
\]
Thus $Y=N$ is not a direct factor of $X=S_3$: a direct
complement would be a normal subgroup of order two.
The action of $Y$ on $W\subseteq A$, the action of $K$ on $Y$,
and the action of every tail factor $\langle t_n\rangle$ on $W$
are nontrivial. The example concerns this finite choice;
it does not exclude a direct-factor comparison after different
finite sets have been chosen.
\end{example}

\section{Extension from the socle}\label{restriction-section}
Throughout this section the ring is arbitrary. A module $M$ is
quasi-injective if each map from a submodule of $M$ to $M$
extends to an endomorphism of $M$. For $U\subseteq M$, put
\[
\mathcal N_M(U)=\{a\in\End_R(M):a(U)=0\}.
\]
For finitely generated $U$, these subgroups form a basis of
zero neighborhoods for the finite topology. It is the
topology of pointwise convergence with $M$ discrete.
Endomorphism rings are complete and Hausdorff: the equations
for additivity and scalar compatibility define a closed
subset of the product of discrete sets $M^M$. Composition
is continuous. We use the convention of \cite[Section 1]{AW}.

\Needspace{20\baselineskip}
\begin{theorem}\label{restriction}
Let $M$ be a quasi-injective module over an arbitrary ring and
put $W=\Soc M$. Restriction defines a continuous surjective
ring homomorphism
\[
\rho_M:\End_R(M)\longrightarrow\End_R(W).
\]
For every submodule $U\subseteq M$,
\begin{equation}\label{image}
\rho_M\bigl(\mathcal N_M(U)\bigr)=\mathcal N_W(U\cap W).
\end{equation}
Hence the following conditions are equivalent:
\begin{enumerate}
\item $\rho_M$ is open for the finite topologies;
\item $\Soc U$ has finite length for every finitely generated
submodule $U\subseteq M$.
\end{enumerate}
If $W$ is essential in $M$, then
$\ker\rho_M=J(\End_R(M))$.
\end{theorem}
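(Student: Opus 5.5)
Every endomorphism $a$ of $M$ maps $W=\Soc M$ into $W$, since homomorphic images of semisimple modules are semisimple. Hence restriction $a\mapsto a|_W$ is a well-defined ring homomorphism $\rho_M$. For surjectivity, take $g\in\End_R(W)$, regard it as a map $W\to M$ from a submodule, and extend it by quasi-injectivity to $a\in\End_R(M)$. Then $\rho_M(a)=g$.

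The core step is \eqref{image}. The inclusion $\subseteq$ is immediate: if $a(U)=0$, then $a$ kills $U\cap W$. For $\supseteq$, let $g\in\End_R(W)$ with $g(U\cap W)=0$. Since $W$ is semisimple, write $W=(U\cap W)\oplus W'$. The plan is to define a map on $U+W$ by sending $U$ to $0$ and acting by $g$ on $W$. This is well defined because $U\cap W$ is killed by both prescriptions; explicitly, $u+w\mapsto g(w)$. Extending this map from the submodule $U+W$ to $a\in\End_R(M)$ gives $a(U)=0$ and $a|_W=g$. Taking $U=0$ recovers surjectivity. Continuity also follows, since $\rho_M^{-1}(\mathcal N_W(V))\supseteq\mathcal N_M(V)$ for finitely generated $V\subseteq W$.

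For the equivalence, recall that openness of the homomorphism $\rho_M$ means each image $\rho_M(\mathcal N_M(U))=\mathcal N_W(U\cap W)$, for finitely generated $U$, contains some basic neighborhood $\mathcal N_W(V)$ with $V\subseteq W$ finitely generated. This is the step I expect to need the most care.

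(ii)$\Rightarrow$(i): take $V=\Soc U=U\cap W$, which is finitely generated because it has finite length. Then the image equals $\mathcal N_W(V)$.

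(i)$\Rightarrow$(ii): suppose $\mathcal N_W(V)\subseteq\mathcal N_W(U\cap W)$ with $V$ finitely generated semisimple, hence of finite length. I claim $U\cap W\subseteq V$. If not, write $W=V\oplus V'$ and project $U\cap W$ to $V'$; the image is nonzero. Choose the projection $g$ of $W$ onto $V'$. Then $g\in\mathcal N_W(V)$ but $g(U\cap W)\neq0$, a contradiction. Hence $\Soc U=U\cap W\subseteq V$ has finite length.

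For the Jacobson radical, assume $W$ is essential and use the standard fact for quasi-injective modules (Johnson--Wong) that
\[J(\End_R(M))=\{a:\ker a\text{ essential in }M\}.\]
If $a|_W=0$, then $\ker a\supseteq W$ is essential, so $a\in J$. Conversely, if $\ker a$ is essential, then $\ker a$ contains every simple submodule of $M$, since each meets $\ker a$ nontrivially and is simple. Thus $\ker a\supseteq W$ and $\rho_M(a)=0$.

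If citing Johnson--Wong is to be avoided, argue directly. For $\ker\rho_M\subseteq J$: given $a$ vanishing on $W$ and any $b$, the kernel of $1-ba$ meets $W$ trivially. Since $W$ is essential, $1-ba$ is monic. Quasi-injectivity then makes $1-ba$ split monic, with an image that is an essential summand, so $1-ba$ is invertible. For $J\subseteq\ker\rho_M$: if $\rho_M(a)\ne0$, then $\rho_M(J)$ is a nonzero ideal in $J$ of the semisimple-module endomorphism ring $\End_R(W)$, which is von Neumann regular with $J=0$. By surjectivity $\rho_M(J(\End M))\subseteq J(\End W)=0$, which is a contradiction.
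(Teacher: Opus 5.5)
Your proof is correct and follows essentially the paper's route. You use the same extension $u+w\mapsto g(w)$ from $U+W$ for \eqref{image} and the same reduction of openness to openness of $\mathcal N_W(U\cap W)$. Your direct radical argument is also the paper's: invertibility of $1-ba$ via splitting of monomorphisms, and $\rho_M(J(\End_R(M)))\subseteq J(\End_R(W))=0$ by von Neumann regularity.

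The only variation is in (i)$\Rightarrow$(ii). There you separate $U\cap W$ from $V$ using the projection of $W$ onto a complement of $V$, which is slightly more direct than the paper's choice of a simple quotient of $(L+V)/V$ embedded back into $W$. The essential-kernel description of the radical that you cite first is an optional shortcut; it is usually credited to Faith--Utumi rather than Johnson--Wong.
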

\begin{proof}
The socle is fully invariant, so restriction is a ring
homomorphism. Quasi-injectivity gives surjectivity, and
continuity follows by testing on finite subsets of $W$.
The inclusion from left to right in \eqref{image} is immediate.
If $g\in\End_R(W)$ vanishes on $U\cap W$, the rule
\[
U+W\longrightarrow M,\qquad u+w\longmapsto g(w)
\]
is well-defined and $R$-linear. Extend it to an endomorphism
of $M$. This extension vanishes on $U$ and restricts to $g$,
proving \eqref{image}.

For a submodule $L$ of the semisimple module $W$, we claim
that $\mathcal N_W(L)$ is open if and only if $L$ has finite
length. The forward implication requires verification.
If $\mathcal N_W(V)\subseteq\mathcal N_W(L)$ for a finitely
generated $V\subseteq W$ and $L\not\subseteq V$, choose a
simple quotient of $(L+V)/V$ on which the image of $L$ is
nonzero. Semisimplicity extends the corresponding map to
$W/V$, and its simple target embeds in $W$. The resulting
endomorphism of $W$ kills $V$ but not $L$, a contradiction.
Thus $L\subseteq V$ and has finite length. The converse follows
by choosing finite generators of $L$. Apply this claim to
$L=U\cap W=\Soc U$ in \eqref{image}.

Suppose finally that $W$ is essential. If $n\in\ker\rho_M$
and $b\in\End_R(M)$, the map $1-bn$ fixes $W$ pointwise.
Its kernel is zero by essentiality. By
Lemma~\ref{relative-extension}, this monomorphism splits.
Its image contains the essential submodule $W$, so its
complement is zero. Thus $1-bn$ is invertible for every $b$,
and $n\in J(\End_R(M))$. Conversely, the surjective map
$\rho_M$ sends the radical into the radical of
$\End_R(W)$, which is zero because this endomorphism ring
is von Neumann regular. This proves the equality.
\end{proof}

\begin{remark}\label{nilpotent-kernel}
If $R/J$ is semisimple artinian and $J^d=0$, then the kernel
in Theorem~\ref{restriction} has $d$th power zero. Indeed, put
$W_k=\{m\in M:mJ^k=0\}$. Then $W_1=W$, $W_d=M$, and
every endomorphism killing $W$ sends $W_k$ into $W_{k-1}$.
This argument uses no self-injectivity assumption on $R$.
\end{remark}

Essentiality cannot be omitted from the last assertion of the
theorem. For example, $\mathbb Q$ as a $\mathbb Z$-module has
zero socle. Restriction then has the whole endomorphism field
as kernel. The openness criterion itself still applies.

\section{Semiprimary self-injective rings}\label{structure-section}
From now until the end of Section~\ref{homogeneous-section},
assume \eqref{F}. Put $S=\Soc R$. We collect the structural
facts needed in the proofs.
The arguments fix the sides of annihilators and tensor products.
A module $X$ is FP-injective if $\Ext_R^1(F,X)=0$ for every
finitely presented right module $F$. For a subset $L\subseteq R$, let
$\la(L)=\{r:rL=0\}$ and $\ra(L)=\{r:Lr=0\}$.

\begin{lemma}\label{structure}
Under \eqref{F}, every module has essential socle, and $R_R$
is an injective cogenerator. Its socle $S$ is of finite length on both
sides, contains every simple type on both sides, and satisfies
\[
S=\la(J)=\ra(J),\qquad JS=SJ=0,\qquad
\la(S)=\ra(S)=J.
\]
For every projective right module $Q$,
\[
QS=\Soc Q,
\qquad Q\otimes_RS\xrightarrow{\ \sim\ }QS.
\]
The injective hull of each simple right module is finitely
generated and projective. Every projective right module is
FP-injective, and every monomorphism between projective right
modules splits.
\end{lemma}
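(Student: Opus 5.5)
Most of Lemma~\ref{structure} is classical structure theory for right self-injective semiprimary rings, so I will assemble it from standard facts and verify by hand only the identities for $S$ and the tensor isomorphism.

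\textbf{Essential socles and the cogenerator property.} A semiprimary ring is right and left perfect. Every nonzero module $X$ satisfies $XJ^d=0$. Choose $k$ least with $XJ^k=0$, and pick a nonzero $x$ in $XJ^{k-1}$. Then $xJ=0$, so $xR$ is semisimple. Hence every nonzero submodule contains a simple submodule, and socles are essential. Since $R$ is right self-injective and semiprimary, it is right perfect and right self-injective, hence right PF by the classical results \cite{Bass,JW,FU}. Thus $R_R$ is an injective cogenerator, and it contains a copy of every simple right module. Right PF also gives that $S$ is essential and finitely generated on the right. It is then left PF too (the Kato/Osofsky theory cited), which gives the left statements.

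\textbf{The annihilator identities.} Since $R/J$ is semisimple, a right module $X$ is semisimple iff $XJ=0$. Hence $\Soc R_R=\ra(J)$, and symmetrically $\Soc{}_RR=\la(J)$. For PF rings the two socles coincide, so $S=\la(J)=\ra(J)$, and therefore $JS=SJ=0$. For $\la(S)$: the inclusion $J\subseteq\la(S)$ holds because $JS=0$. Conversely, $\la(S)$ is a two-sided ideal. If it were not contained in $J$, its image in $R/J$ would contain a nonzero idempotent, which lifts to an idempotent $e\in\la(S)$. But $eR$ has nonzero socle $eS$, so $eS\neq0$, a contradiction. The same argument on the other side gives $\ra(S)=J$. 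Finite length of $S$ on both sides comes from $S$ being a finitely generated $R/J$-module on each side. Each simple type occurs by the cogenerator property, on each side.

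\textbf{Projective modules.} First, $Q=R^{(X)}$ satisfies $QS=S^{(X)}=\Soc Q$, since the socle commutes with direct sums. This passes to summands because both sides are additive. For the tensor map, note $S$ is a right $R/J$-module. Since $R/J$ is semisimple, $S_R$ is flat as an $R/J$-module, so it suffices that the map $Q\otimes_RS\to QS$ is an isomorphism for free $Q$. That is clear coordinatewise, and it survives summands.

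\textbf{Hulls, FP-injectivity and splitting.} The injective hull of a simple $T$ embeds in $R_R$. It is a summand $eR$ for a primitive idempotent $e$, because $R$ is injective and $T\subseteq R$. So it is finitely generated projective. Since $R$ is right noetherian-free but semiperfect, $R^{(X)}$ is not injective in general, which is the whole issue. However, finitely presented modules $F$ satisfy $\Ext^1(F,R^{(X)})=\Ext^1(F,R)^{(X)}=0$, using the finite presentation to commute $\Ext$ with sums. Summands inherit this, so projectives are FP-injective.

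\textbf{The main obstacle: splitting monomorphisms between projectives.} Let $f\colon Q\to Q'$ be monic with $Q,Q'$ projective. Over a perfect ring projectives are direct sums of $e_iR$, with $e_iR$ local. I expect this step to be the hardest. My plan is the following. Because $\Soc Q$ is essential, $f$ restricted to $\Soc Q=QS$ is monic. Each summand $e_iR$ is injective with simple essential socle. So $\Soc Q'\supseteq f(\Soc Q)$ decomposes, and one picks a summand $Q''$ of $Q'$ that is a sum of the $e_jR$ whose simple socles form a complement of $f(\Soc Q)$ in $\Soc Q'$, using essential socles and the fact that indecomposable projectives are hulls of their socles. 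Then $f(Q)\cap Q''=0$, because its socle is zero. Moreover $f(Q)\oplus Q''$ has the full socle of $Q'$, and each $f(e_iR)$ is an injective hull inside $Q'$. The delicate point is showing $f(Q)+Q''=Q'$ without injectivity of infinite sums. I would compare them via Nakayama: project modulo $Q'J$ and use that over a perfect ring $Q'J$ is small, so it suffices that the images span $Q'/Q'J$. This I expect to require the socle/top matching $e_iR\mapsto$ Nakayama permutation of the QF-like pairing on finite layers.
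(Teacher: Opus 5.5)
Your arguments for essential socles, for $\la(S)=J$ by lifting an idempotent inside the ideal $\la(S)$, for $QS=\Soc Q$ and the tensor map (free modules and summands; the flatness remark plays no role), and for FP-injectivity (since $\Ext^1_R(F,-)$ commutes with direct sums once the relation module of $F$ is finitely generated) are fine.

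\textbf{The left-sided claims.} The real problem is the sentence ``It is then left PF too''. This is not part of the Kato--Osofsky theory. Left PF includes injectivity of ${}_RR$, and by Osofsky's theorem a perfect ring that is self-injective on both sides is QF. So under \eqref{F} your sentence already implies the semiprimary Faith conjecture, which the paper leaves open.

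Everything left-sided in your proposal rests on it: the coincidence of the two socles, finite length of ${}_RS$, every simple left module occurring in $S$ (``cogenerator on each side''), and, in your route, $\ra(S)=J$. These facts are true (they follow from right PF alone), but they must be extracted from right-sided data, as the paper does:
\begin{enumerate}
\item Suppose left multiplication by $j\in J$ were nonzero on the simple essential socle of a primitive $eR$. Then it would be monic on $eR$, hence split since $eR$ is injective, and a left inverse would give $cje=e$, impossible modulo $J$. So $JS=0$.
\item Right self-injectivity makes every $D$-endomorphism of $S_D$ a left multiplication, with kernel $\la(S)=J$. Thus $D\cong\End_D(S_D)$, with $S_D$ a finitely generated faithful generator. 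This Morita self-equivalence shows that ${}_DS$ is finitely generated and contains every simple left type.
\item The right cogenerator property gives $\ra\la(L)=L$ for right ideals $L$. Hence $\ra(S)=\ra\la(J)=J$ and $\ra(J)=\ra\la(S)=S$.
\end{enumerate}
With the paper's conventions, incidentally, $\Soc R_R=\la(J)$, not $\ra(J)$.

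\textbf{Splitting of monomorphisms.} Your proposal leaves this unproved. The step $f(Q)+Q''+Q'J=Q'$ is exactly where the difficulty sits, and a Nakayama permutation on finite layers gives no control over an infinite sum of indecomposables. Note also that $f(Q)\oplus Q''\cong Q\oplus Q''$ is projective and essential in $Q'$. So concluding $f(Q)\oplus Q''=Q'$ is itself an instance of the statement you are trying to prove.

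The missing idea is purity:
\begin{enumerate}
\item Write $Q$ as a direct sum of finitely generated indecomposable projectives. Every finite subsum $Q_F$ is injective under \eqref{F}, so $f|_{Q_F}$ splits.
\item Hence $f$ is a directed union of split monomorphisms and is pure. Equivalently, $\operatorname{coker}f$ is the directed colimit of the projective modules $Q'/f(Q_F)$.
\item Since $Q'$ is flat, the Tor sequence gives $\Tor_1^R(\operatorname{coker}f,N)=0$ for every left module $N$. So the cokernel is flat, hence projective because semiprimary rings are perfect (Bass), and $f$ splits.
\end{enumerate}
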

\begin{proof}
Nilpotence of $J$ gives a nonzero semisimple submodule in
every nonzero cyclic module. Decompose $R_R$ into finitely
many primitive projectives. Each is injective and has simple
essential socle: the hull of a simple submodule embeds and
splits, and indecomposability excludes a complement.
Distinct projective types have distinct socle types, since
injective hulls are unique. The number of projective types
equals the number of simple types, so every simple right
module occurs in $S$. This also proves the assertion about
its hull, and $R_R$ is a cogenerator.

Certainly $SJ=0$. If left multiplication by some $j\in J$
is nonzero on $\Soc(eR)$, it is monic on $eR$ by essentiality
and simplicity. It splits because $eR$ is injective. A left
inverse would give $cje=e$, which is impossible modulo $J$.
Thus $JS=0$.

Every endomorphism of $S_D$ extends to an endomorphism of
$R_R$, hence to left multiplication. The kernel of this
action is $J$: a larger ideal modulo $J$ would contain a
nonzero idempotent, whose lift $e$ would satisfy $eS=0$,
contrary to $\Soc(eR)\ne0$. Therefore
$D\cong\End_D(S_D)$. Since $S_D$ is a finite faithful
generator, this gives a Morita self-equivalence of $D$.
In particular, ${}_DS$ is finite and contains all simple
left types. The cogenerator property gives $\ra\la(L)=L$ for every
right ideal $L$: a nonzero element of $R/L$ is separated by
a homomorphism to $R$, and such a map is induced by left
multiplication by an element of $\la(L)$.
Since $S=\la(J)$, this gives $\ra(S)=J$. Applying the same
identity to $S$, and using $\la(S)=J$ proved above, gives
$\ra(J)=S$.

Semiprimary rings are perfect on both sides. Their projectives
are direct sums of finitely generated indecomposable
projectives, and their flat modules are projective
\cite{Bass}. Finite projectives are injective under
\eqref{F}. Direct sums of these are FP-injective: a map
from a finitely generated relation module has image in a
finite injective subsum and extends there. Let $u:P\to Q$ be a monomorphism between projectives.
Its restriction to every finite injective subsum of $P$
splits in $Q$. Taking their directed union shows that $u$
is pure. The tensor sequence then gives
$\Tor_1^R(Q/u(P),N)=0$ for every left module $N$, since
$Q$ is flat. The quotient is flat and hence projective,
so $u$ splits. Finally,
$\Soc(R^{(T)})=S^{(T)}=R^{(T)}S$ and the tensor assertion
hold for free modules; applying a summand projection gives
them for projectives.
\end{proof}

\begin{lemma}\label{cover-detector-lemma}
Let $0\to K_X\to Q_X\to X\to0$ be a projective cover under
\eqref{F}. There is an isomorphism of right modules
\begin{equation}\label{cover-detector}
\Tor_1^R(X,R/S)\cong\Soc K_X.
\end{equation}
The right $R$-action on the Tor group comes from the second
variable $R/S$, viewed as an $R$-$R$-bimodule.
In particular, this Tor group vanishes if and only if $X$
is projective. The left module $R/S$ is finitely presented.
\end{lemma}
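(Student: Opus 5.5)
Tensor the short exact sequence $0\to S\to R\to R/S\to0$ of $R$-$R$-bimodules with $X$, and separately tensor the projective cover $0\to K_X\to Q_X\to X\to0$ with ${}_R(R/S)$. The second choice is the efficient one. Since $Q_X$ is projective, hence flat, the long exact Tor sequence gives
\[
0\to\Tor_1^R(X,R/S)\to K_X\otimes_R R/S\to Q_X\otimes_R R/S\to X\otimes_R R/S\to0 .
\]
Identify $K_X\otimes_R R/S\cong K_X/K_XS$ and $Q_X\otimes_R R/S\cong Q_X/Q_XS$; these identifications are right-linear for the right action coming from $R/S$. Then
\[
\Tor_1^R(X,R/S)\cong\ker\bigl(K_X/K_XS\to Q_X/Q_XS\bigr)=(K_X\cap Q_XS)/K_XS .
\]

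Next I will compute the two terms. By Lemma~\ref{structure}, $Q_XS=\Soc Q_X$, so $K_X\cap Q_XS=\Soc K_X$. For the denominator, I claim $K_XS=0$. Because the cover is a projective cover, $K_X\subseteq Q_XJ$, so $K_XS\subseteq Q_XJS=0$ since $JS=0$ by Lemma~\ref{structure}. (Over a semiprimary, hence perfect, ring, projective covers exist and have superfluous kernel, and superfluous submodules of a projective lie in $Q_XJ$.) This gives \eqref{cover-detector}. The right action on the Tor group is through $R/S$, and $J$ and $S$ act compatibly on the quotient, so the isomorphism is right-linear.

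For the vanishing criterion, note that every module has essential socle by Lemma~\ref{structure}. Hence $\Soc K_X=0$ if and only if $K_X=0$, which holds if and only if $Q_X\to X$ is an isomorphism, i.e.\ $X$ is projective. Conversely, if $X$ is projective then Tor vanishes trivially.

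Finally, ${}_R(R/S)$ is finitely presented because $S$ is of finite length as a left module (Lemma~\ref{structure}), hence a finitely generated left ideal.

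The main obstacle is the bookkeeping of sides: I must check that $K\otimes_R R/S\cong K/KS$ as right modules with the stated action, and that $K_X\subseteq Q_XJ$ is justified for an arbitrary, possibly infinitely generated, $X$. For the latter I would cite Bass's theory of perfect rings \cite{Bass}.
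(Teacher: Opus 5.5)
Your proposal is correct and follows essentially the same route as the paper's proof. You tensor the projective cover with $R/S$, use $K_X\subseteq Q_XJ$ and $JS=0$ to get $K_XS=0$, identify the kernel as $K_X\cap\Soc Q_X=\Soc K_X$, and then conclude from essentiality of socles and finite generation of ${}_RS$, exactly as the paper does.
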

\begin{proof}
Minimality gives $K_X\subseteq Q_XJ$, and hence $K_XS=0$.
Tensoring the covering sequence with $R/S$ identifies the
Tor group with
\[
\ker(K_X/K_XS\longrightarrow Q_X/Q_XS)
 =K_X\cap\Soc Q_X=\Soc K_X.
\]
Socles are essential, so vanishing is equivalent to $K_X=0$.
The last assertion follows because ${}_RS$ is finitely
generated by Lemma~\ref{structure}.
\end{proof}

\subsection{The countable free module and its hull}
Set $E=R^{(\N)}$ and $P=R^{\N}$, retaining the canonical
inclusion. An inclusion is pure when tensoring with any left
module preserves its injectivity. We first recall why this countable choice detects
the QF conclusion.

\begin{lemma}\label{countable}
Under \eqref{F}, the following are equivalent:
$R$ is QF; $E$ is injective; $P$ is projective;
the inclusion $E\subseteq P$ splits.
\end{lemma}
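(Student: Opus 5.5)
We will prove the cycle of implications QF $\Rightarrow$ ($E$ injective) $\Rightarrow$ ($E\subseteq P$ splits) $\Rightarrow$ ($P$ projective) $\Rightarrow$ QF.

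\emph{QF implies $E$ injective.} Over a QF ring, direct sums of injective modules are injective, because the ring is right noetherian (Bass--Papp). Hence $E=R^{(\N)}$ is injective.

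\emph{$E$ injective implies the inclusion splits.} If $E$ is injective, the inclusion $E\subseteq P$ splits, since any monomorphism out of an injective module splits.

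\emph{Splitting implies $P$ projective.} Suppose $E\subseteq P$ splits, say $P=E\oplus C$. We must show that $C$, and hence $P$, is projective. The first tool is the purity of $R^{(\N)}\subseteq R^{\N}$: for every finitely presented left module $N$ one has $R^{\N}\otimes N\cong N^{\N}$, so tensoring the inclusion with such $N$ gives $N^{(\N)}\subseteq N^{\N}$, which is injective. Purity for all left modules then follows by taking directed limits. Thus $P/E$ is flat whenever $P$ is flat. Note, however, that $P$ is flat if and only if $R$ is left coherent (Chase), so I do not want to rely on flatness of $P$ directly.

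A better route uses Lemma~\ref{cover-detector-lemma}: $X$ is projective if and only if $\Tor_1^R(X,R/S)=0$, and ${}_R(R/S)$ is finitely presented. For a finitely presented left module $N$, $\Tor_1(-,N)$ commutes with direct products, since $N$ has a resolution by finitely generated free modules in degrees $0,1$. This requires $N$ to be $FP_2$, which I expect to be the main obstacle. To check it, I would use that ${}_RS$ is finitely generated and that its kernel from $R^n$ should again be finitely generated. Since $R$ is left perfect and every left module has a projective cover, I would try to show $\la$/$\ra$ duality (Lemma~\ref{structure}) gives finite generation of syzygies of ${}_RS$.

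Granting this, $\Tor_1(P,R/S)\cong\Tor_1(R,R/S)^{\N}=0$, so $P$ is projective directly, without needing the splitting. In that case the splitting hypothesis serves the converse direction instead, and the cycle should be rearranged accordingly.

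\emph{$P$ projective implies QF.} I would take the following route. Assuming $P$ projective, the monomorphism $E\subseteq P$ between projectives splits by Lemma~\ref{structure}. Its complement $C$ is projective, hence a direct sum of finitely generated indecomposable injective projectives. Then $P\cong E\oplus C$ is a direct sum of injectives, and it is also a direct product of copies of the injective module $R_R$, hence injective. So $E$, a summand of $P$, is injective.

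With $E=R^{(\N)}$ injective, every countable direct sum of the finitely many indecomposable injectives $eR$ is injective. By Faith's countable Bass--Papp theorem, $R$ then satisfies ACC on annihilator right ideals. Since $R$ is right self-injective, hence right Kasch, annihilator duality via Lemma~\ref{structure} turns this into ACC on left annihilators and so on all right ideals. Thus $R$ is right noetherian and right self-injective, that is, QF.

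In summary the two implications $E$ injective $\Leftrightarrow$ QF are classical (Faith), and the genuine work is the projectivity of $P$. If the $FP_2$ argument fails, I would fall back to proving directly that splitting forces $E$ injective, as a summand of the injective module $P=R^{\N}$, which is immediate. So the main obstacle is only to show that $P$ projective implies splitting, and that comes from Lemma~\ref{structure} as above.
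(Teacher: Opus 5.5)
Your proposal never derives ``$P$ is projective'' from any of the other three conditions, and the route you sketch for it cannot work as stated. The sequence $0\to S\to R\to R/S\to0$ has free middle term, so ${}_R(R/S)$ is $FP_2$ exactly when ${}_RS$ is finitely presented. Since ${}_RS$ contains every simple left type (Lemma~\ref{structure}), this means every simple left module is finitely presented, equivalently ${}_RJ$ is finitely generated. Then each $J^n/J^{n+1}$ has finite length on the left, and $R$ is left artinian, hence left noetherian. That gives DCC on right annihilators. Since every right ideal is a right annihilator, $R$ is right artinian and therefore QF. So, under \eqref{F}, the property you hoped to verify is equivalent to the conclusion itself. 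You noticed the symptom yourself: if it held unconditionally, $\Tor_1^R(P,R/S)=0$ would make $P$ projective for every ring satisfying \eqref{F}. Your last paragraph would then prove the semiprimary Faith conjecture, which is open.

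Your fallback does give ``split $\Rightarrow$ $E$ injective'' (as $P$ is injective) and ``$P$ projective $\Rightarrow$ split'' (Lemma~\ref{structure}). The missing arrow, however, is the one \emph{into} ``$P$ projective''. Calling ``$P$ projective implies splitting'' the main obstacle points in the wrong direction. The paper supplies this arrow with the tool you set aside. A QF ring is left noetherian, hence left coherent, so $R^{\N}$ is flat by Chase's theorem. A semiprimary ring is perfect, so flat right modules are projective by Bass. Your own Tor idea also works here, but only after QF is known: left noetherianity makes ${}_R(R/S)$ an $FP_2$ module, so $\Tor_1^R(P,R/S)\cong\Tor_1^R(R,R/S)^{\N}=0$ and Lemma~\ref{cover-detector-lemma} applies. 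The cycle QF $\Rightarrow$ $P$ projective $\Rightarrow$ split $\Rightarrow$ $E$ injective $\Rightarrow$ QF then closes.

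For the last arrow, Faith's $\Sigma$-injectivity criterion is a legitimate alternative to the paper's direct argument. The paper shows that injectivity of $E$ forces $J/J^2$ to have finite length, hence $J$ finitely generated and $R_R$ artinian. Your duality sentence is wrong, though: ACC on right annihilators dualizes to DCC, not ACC, on left annihilators. No duality is needed, since $\ra\la(L)=L$ for every right ideal $L$, so ACC on right annihilators is already right noetherianity.
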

\begin{proof}
If $E$ is injective and $J/J^2$ has infinite length as a
right $D$-module, choose countably many distinct simple
summands of $J/J^2$ and map them nontrivially into distinct
coordinates of $S^{(\N)}$, sending a complement to zero.
The resulting map $J\to E$ has infinitely many nonzero
coordinate maps, so it cannot extend to $R\to E$. This
contradicts injectivity. Thus $J/J^2$ has finite length. Choose lifts $u_1,\ldots,u_t$
of its generators and put $K=\sum_i u_iR$. Then $J=K+J^2$;
successively substituting this equality gives $J=K+J^n$
for every $n\geq2$, so $J=K$ by nilpotence.
Every $J^n/J^{n+1}$ is generated over $D$ by the finitely
many products $u_{i_1}\cdots u_{i_n}$. Each layer therefore
has finite length, and the finite radical filtration makes
$R_R$ artinian. Right self-injectivity gives QF.

Conversely, over a QF ring direct sums of injectives are
injective, and products of projectives are projective
(coherence gives flatness by \cite{Chase}, and perfectness
gives projectivity).
This proves the required implications from QF.
The module $P$ is always injective. Therefore a splitting
of $E\subseteq P$ makes $E$ injective. If $P$ is projective,
the same inclusion splits by Lemma~\ref{structure}.
\end{proof}

\begin{proposition}\label{hull}
The inclusion $E\subseteq P$ admits a decomposition
\begin{equation}\label{hull-decomp}
P=I\oplus T,\qquad E\subseteq I,
\end{equation}
where $I$ is an injective hull of the original $E$. The map
$E\to I$ is pure and is its pure-injective envelope. Moreover,
\[
\Soc I=ES=S^{(\N)},\qquad
\Soc T\cong S^{\N}/S^{(\N)}.
\]
For $B=I/E$, the extension of $P/E=B\oplus T$ by $E$ is
the pullback along $B\oplus T\to B$ of
\begin{equation}\label{hull-extension}
\eta_I=[0\to E\to I\to B\to0].
\end{equation}
\end{proposition}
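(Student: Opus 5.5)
The plan has four steps: build $I$ inside $P$ and split it off, prove the purity and envelope claims, compute the two socles, and identify the extension class.

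\emph{Constructing $I$ and the decomposition.} The module $P=R^{\N}$ is injective, being a product of copies of the injective module $R_R$. By Zorn's lemma, choose a submodule $I\subseteq P$ maximal among those containing $E$ as an essential submodule. The standard argument shows $I$ is an injective hull of $E$. Since $I$ is injective, it is a direct summand of $P$; write $P=I\oplus T$. The inclusion $E\subseteq I$ is unchanged, which gives \eqref{hull-decomp}.

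\emph{Purity and the envelope.} I would show that $E\to P$ is pure. For each finitely generated submodule of $E$, it lies in a finite subsum $R^{(n)}$, and $R^{(n)}$ is a direct summand of $P$ via the coordinate projection. Hence $E$ is a directed union of summands of $P$, so $E\to P$ is pure, and therefore $E\to I$ is pure as well. The module $I$ is injective and hence pure-injective. For the envelope property, let $g:I\to I$ fix $E$. Then $\ker g\cap E=0$, so $g$ is monic by essentiality. Injectivity makes $g(I)$ a summand of $I$ that contains the essential submodule $E$, so $g$ is an automorphism. Thus $E\to I$ has the minimality required of a pure-injective envelope, and the pure-injective envelope is unique up to isomorphism over $E$.

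\emph{Socles.} By Lemma~\ref{structure}, $\Soc E=ES=S^{(\N)}$, and since $E$ is essential in $I$ we get $\Soc I=\Soc E=S^{(\N)}$. For $P$, the socle is $\{p:pJ=0\}$, because every module has essential socle and $S=\ra(J)$. Computing coordinatewise gives $\Soc P=S^{\N}$. From $P=I\oplus T$ we have $\Soc P=\Soc I\oplus\Soc T$, hence $\Soc T\cong S^{\N}/S^{(\N)}$.

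\emph{The extension class.} From $P/E=(I/E)\oplus T=B\oplus T$ with projection $q:B\oplus T\to B$, the fibre product of $I\to B$ and $q$ is $\{(x,b\oplus t):\bar x=b\}\cong I\oplus T=P$. This is compatible with the inclusion of $E$ and with the quotient map to $B\oplus T$. Hence $[0\to E\to P\to P/E\to 0]=q^*\eta_I$.

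The main obstacle I expect is confirming that "pure-injective envelope" carries its intended meaning here. This requires checking the pure-essential condition, which the essentiality argument above should yield. Every other step is routine.
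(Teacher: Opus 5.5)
Your proof is correct and follows the paper's outline. Both realize the hull inside $P$ and split it off, use essentiality for left minimality and for $\Soc I=\Soc E$, and identify the extension through $I\times_B(B\oplus T)\cong P$.

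Two steps are handled differently. First, you build $I$ by Zorn's lemma as a maximal essential extension of $E$ inside the injective module $P$. The paper instead extends $E\to P$ across an abstract hull and notes that the extension is monic; the two constructions are interchangeable. Second, and more substantively, the purity arguments differ:
\begin{itemize}
\item You observe that $E=R^{(\N)}$ is a directed union of the coordinate summands $R^{(n)}$ of $R^{\N}$. This works over any ring, so it is more elementary and independent of \eqref{F}.
\item The paper uses that $E$ is FP-injective (Lemma~\ref{structure}, which needs \eqref{F}), so $E$ is pure in every extension, without reference to how $I$ sits in $P$. This embedding-free form is the one the paper reuses for $E'\subseteq I'$ in Theorem~\ref{descent}.
\end{itemize}

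On the envelope, you should say explicitly that every map from $E$ to a pure-injective module extends across the pure monomorphism $E\to I$. This preenvelope property, together with the left minimality you proved, is exactly the envelope condition; uniqueness follows from it rather than being an input. The pure-essentiality you flag as the main obstacle is immediate, since a submodule of $I$ meeting $E$ trivially is zero by ordinary essentiality.

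There are two harmless slips:
\begin{itemize}
\item $\Soc M=\{m:mJ=0\}$ holds because $R/J$ is semisimple, not because socles are essential.
\item Coordinatewise, $p_nJ=0$ means $p_n\in\la(J)$ rather than $\ra(J)$. Both equal $S$ by Lemma~\ref{structure}, so the conclusion is unaffected.
\end{itemize}
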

\begin{proof}
Extend the fixed map $E\to P$ across an abstract injective
hull $E\subseteq I_0$. Essentiality makes $I_0\to P$ monic,
and injectivity of its source splits it. Identify its image
with $I$. Essentiality gives $\Soc I=\Soc E$, and the
socle formulas follow from Lemma~\ref{structure} and
$\Soc P=S^{\N}$.

Since $E$ is FP-injective, $E\subseteq I$ is pure. If an
endomorphism of $I$ fixes $E$, it is monic by essentiality,
splits by injectivity, and is onto since its image contains
$E$. Thus $E\to I$ is left minimal. Every map from $E$ to
a pure-injective module extends across this pure inclusion;
hence this is a pure-injective envelope. This is the
minimality appearing in \cite{KS}; the sum--hull--product
construction itself is classical \cite{Levine}.

Quotienting \eqref{hull-decomp} by $E$ gives $B\oplus T$.
The pullback of $I\to B$ along the projection has middle
term $I\times_B(B\oplus T)\cong I\oplus T=P$, proving
the last assertion.
\end{proof}

\section{The countable hull quotient}\label{quotient-section}
We retain $E\subseteq I\subseteq P$ from Proposition~\ref{hull}
and put $B=I/E$. All maps below refer to this fixed inclusion.

\subsection{The hull extension and homological detection}
\begin{theorem}\label{quotient}
Let $B=I/E$, with $I$ as in Proposition~\ref{hull}, and let
$K_I$ be the kernel of a projective cover of $I$. Then
$BS=0$ and $BJ^{d-1}=0$. Moreover,
\begin{align}
\Ext_R^1(D,E)&\cong\Hom_R(D,B)\cong\Soc B,\label{ext-b}\\
\Soc K_I&\cong\Tor_1^R(I,R/S)
 \cong B\otimes_RS\cong(B/BJ)\otimes_DS.\label{tor-b}
\end{align}
The right $D$-action in \eqref{ext-b} is induced by
precomposition with left multiplication on $D$. In
\eqref{tor-b}, the right $R$-actions come from the bimodules
$S$ and $R/S$.
The quotient map $p:I\to B$ induces natural isomorphisms
\begin{equation}\label{natural-tor}
\Tor_n^R(p,N):\Tor_n^R(I,N)\xrightarrow{\ \sim\ }
\Tor_n^R(B,N)\qquad(n\geq1)
\end{equation}
for every left $R$-module $N$.
Each of $B=0$ and projectivity of $I$ is equivalent to $R$
being QF.
\end{theorem}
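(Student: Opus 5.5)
The plan is to read every assertion off the fixed short exact sequence $\eta_I:0\to E\to I\xrightarrow{p}B\to0$ of Proposition~\ref{hull}. The structural input is the same throughout: $E=R^{(\N)}$ is free, $I$ is injective, $E\subseteq I$ is pure, and $\Soc I=\Soc E=ES=S^{(\N)}$.

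\emph{Annihilation.} For $x\in I$ we have $(xS)J=x(SJ)=0$ by Lemma~\ref{structure}. Hence $xS\subseteq\Soc I=ES\subseteq E$, which gives $BS=0$. Likewise $(xJ^{d-1})J=xJ^d=0$, so $IJ^{d-1}\subseteq\Soc I\subseteq E$, and therefore $BJ^{d-1}=0$.

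\emph{The isomorphisms in \eqref{ext-b}.} Apply $\Hom_R(D,-)$ to $\eta_I$:
\[
\Hom_R(D,E)\to\Hom_R(D,I)\to\Hom_R(D,B)\to\Ext^1_R(D,E)\to\Ext^1_R(D,I)=0 .
\]
A map $D\to X$ is an element of $X$ annihilated by $J$, that is, an element of $\Soc X$. Since $\Soc E=\Soc I$, the first arrow is bijective. The second arrow is therefore zero, and the connecting map gives $\Hom_R(D,B)\cong\Ext^1_R(D,E)$. Because $D$ is semisimple, $\Hom_R(D,B)$ is the set of elements killed by $J$, which is $\Soc B$. The $D$-actions match because every map in the sequence is natural in the first variable.

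\emph{The Tor statements \eqref{natural-tor} and \eqref{tor-b}.} Take the long Tor sequence of $\eta_I\otimes_R N$. Since $E$ is free, $\Tor_n^R(E,N)=0$ for $n\ge1$. This makes $\Tor_n(p,N)$ injective for $n\ge1$ and surjective for $n\ge2$. For $n=1$, surjectivity follows because purity makes $E\otimes N\to I\otimes N$ injective, proving \eqref{natural-tor}. Take $N=R/S$:
\begin{itemize}
\item Lemma~\ref{cover-detector-lemma} gives $\Soc K_I\cong\Tor_1^R(I,R/S)$.
\item Tensoring $0\to S\to R\to R/S\to0$ with $B$ identifies $\Tor_1^R(B,R/S)$ with $\ker(B\otimes_RS\to B)$. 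That map has image $BS=0$, so the kernel is all of $B\otimes_RS$.
\item Since $JS=0$, we get $B\otimes_RS\cong(B/BJ)\otimes_DS$.
\end{itemize}
All identifications are natural in the bimodule variable, so the right actions coming from $S$ and $R/S$ are respected.

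\emph{The equivalences.} By Lemma~\ref{countable}, $B=0$ iff $E$ is injective iff $R$ is QF. By Lemma~\ref{cover-detector-lemma} and \eqref{tor-b}, $I$ is projective iff $(B/BJ)\otimes_DS=0$. This is the step needing care. Over the semisimple ring $D$, the module $(B/BJ)\otimes_DS$ vanishes only if $B/BJ=0$: every simple right $D$-module $V$ has $V\otimes_DS\ne0$, because ${}_DS$ contains every simple left type (Lemma~\ref{structure}). Finally, $B/BJ=0$ forces $B=BJ=\dots=BJ^d=0$ by nilpotence of $J$. The main obstacle is keeping track of sides in this faithfulness argument and in the bimodule actions; everything else is formal from purity and $\Soc I=\Soc E$.
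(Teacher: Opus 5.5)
Your proposal is correct. For the annihilation statements, \eqref{ext-b}, \eqref{natural-tor} and \eqref{tor-b} it follows the paper's argument almost verbatim: $\Soc I=\Soc E$, vanishing of positive Tor for the free module $E$, purity in degree one, and tensoring the bimodule sequence $0\to S\to R\to R/S\to0$ with $B$ using $BS=0$.

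You genuinely diverge only in proving that projectivity of $I$ is equivalent to $R$ being QF.

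\begin{itemize}
\item The paper argues directly. If $I$ is projective, then $E\subseteq I$ is a monomorphism between projectives, so it splits by Lemma~\ref{structure}, and essentiality forces $I=E$. The converse is trivial, and Lemma~\ref{countable} finishes.
\item You instead pass through the Tor detector of Lemma~\ref{cover-detector-lemma} and the computation \eqref{tor-b}. Since ${}_DS$ contains every simple left type, $V\otimes_DS\ne0$ for every simple right $D$-module $V$. Hence $(B/BJ)\otimes_DS=0$ forces $B/BJ=0$, and nilpotence of $J$ gives $B=0$.
\end{itemize}

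The paper's route is shorter and does not need the Tor computation. It relies instead on the splitting of monomorphisms between projectives, which Lemma~\ref{structure} obtains via purity and the fact that flat modules over a perfect ring are projective. Your route reuses what you have just proved. It also shows directly that the Tor obstruction vanishes exactly when $B$ does, which the paper records only in the remark following the theorem.

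One small point to fill in: your step ``$B=0$ iff $E$ is injective'' is not literally Lemma~\ref{countable}. It needs the one-line observation that an injective $E$ splits off its essential extension $I$ and therefore equals it.
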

\begin{proof}
Since $IS\subseteq\Soc I=ES$, we have $BS=0$.
Also $J^{d-1}\subseteq S$, so $BJ^{d-1}=0$; for $d=1$
this says $B=0$.
Every map $D\to I$ has image in $\Soc I\subseteq E$.
Applying $\Hom_R(D,-)$ to the hull extension therefore
identifies $\Hom_R(D,B)$ with $\Ext_R^1(D,E)$.
Evaluation at $1+J$ gives the second isomorphism in
\eqref{ext-b}, with the stated action.

The module $E$ is projective and $E\subseteq I$ is pure.
In the Tor sequence, all positive Tor groups of $E$ vanish,
and $E\otimes_RN\to I\otimes_RN$ is injective. This proves
\eqref{natural-tor}, including degree one. Apply it to the
bimodule $N=R/S$. Tensoring $0\to S\to R\to R/S\to0$
with $B$, the multiplication map $B\otimes_RS\to B$ is zero
because $BS=0$. Hence
$\Tor_1^R(B,R/S)\cong B\otimes_RS$.
Lemma~\ref{cover-detector-lemma} now gives \eqref{tor-b}.
Since $JS=0$, the last tensor product factors through
$B/BJ$ and $D$.

If $I$ is projective, the inclusion $E\subseteq I$ splits
by Lemma~\ref{structure}, and essentiality forces $I=E$.
The converse is immediate. Lemma~\ref{countable} identifies
$I=E$ with the QF conclusion.
\end{proof}

The groups in \eqref{ext-b} and \eqref{tor-b} measure different
parts of $B$: its socle and its top tensored with $S$.
They are not asserted to be isomorphic. Both vanish exactly
when $B=0$, since a nonzero module over a semiprimary ring
has nonzero top and socle, and ${}_DS$ is a faithful generator.
The Tor obstruction has the following stronger persistence.

\begin{proposition}\label{finite-blocks}
For each positive integer $m$, the short exact sequence
$\eta_I$ is isomorphic to its $m$-fold direct sum, with
the end terms transported by the isomorphisms. In particular,
$B\cong B^{\oplus m}$.
For a finite $F\subseteq\N$, set
\[
E_{\bar F}=R^{(\N\setminus F)},\qquad
I_{\bar F}=\ker(\pi_F|_I),
\]
where $\pi_F:R^{\N}\to R^F$ is coordinate projection. Then
\[
I=R^F\oplus I_{\bar F},\qquad
E_{\bar F}\subseteq_{\rm ess}I_{\bar F},\qquad
I_{\bar F}/E_{\bar F}\cong B.
\]
If $B\ne0$, every simple type occurring in either $\Soc B$
or $B/BJ$ has infinite multiplicity.
\end{proposition}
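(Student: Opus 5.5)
We would prove the three assertions in the order of the statement, since each one feeds the next.

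\emph{The $m$-fold sum.} Fix a bijection $\N\cong\N\times\{1,\ldots,m\}$. Permuting coordinates is an automorphism of $P=R^{\N}$. It carries $P$ onto $(R^{\N})^m$ and $E=R^{(\N)}$ onto $E^{\oplus m}$, compatibly with the canonical inclusions. In $P$, the image $\sigma(I)$ is an injective hull of $\sigma(E)=E^{\oplus m}$. The module $I^{\oplus m}\subseteq P^m$ is also an injective essential extension of $E^{\oplus m}$, since finite sums preserve injectivity and essentiality. Two injective hulls of the same submodule are isomorphic over that submodule. Hence $\sigma$ identifies $\eta_I$, up to an isomorphism of middle terms fixing the left term, with $\eta_I^{\oplus m}$. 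The induced isomorphism on cokernels gives $B\cong B^{\oplus m}$. We only need an isomorphism of sequences with transported end terms, so the fact that $\sigma(I)$ is a different hull from $I^{\oplus m}$ inside $P^m$ causes no trouble.

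\emph{Finite blocks.} We have $R^F=E\cap R^F\subseteq I$, and $R^F$ is injective by Lemma~\ref{structure}, because finite projectives are injective under \eqref{F}. The projection $\pi_F|_I:I\to R^F$ restricts to the identity on $R^F\subseteq I$, so it is a retraction. This gives $I=R^F\oplus I_{\bar F}$ with $I_{\bar F}=\ker(\pi_F|_I)$, and clearly $E_{\bar F}\subseteq I_{\bar F}$. For essentiality, take a nonzero $x\in I_{\bar F}$. Essentiality of $E$ in $I$ gives $xr\in E$ nonzero. Since $\pi_F(xr)=0$, we get $xr\in E\cap\ker\pi_F=E_{\bar F}$. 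Finally, $E=R^F\oplus E_{\bar F}$ is compatible with the decomposition of $I$, so
\[
I/E\cong I_{\bar F}/E_{\bar F}.
\]

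\emph{Infinite multiplicity.} Since $B\cong B^{\oplus m}$, we get $\Soc B\cong(\Soc B)^{\oplus m}$ and $B/BJ\cong(B/BJ)^{\oplus m}$ for every $m$. Suppose a simple type $Q$ occurs in $\Soc B$ with multiplicity $\mu$. Then $\mu=m\mu$ as cardinals for every $m$, so $\mu$ is $0$ or infinite. Multiplicities here are well defined as cardinalities of homogeneous components, namely the $\dim$ of $\Hom(Q,-)$ over the division ring $\End(Q)$. The same argument applies to the semisimple module $B/BJ$. The hypothesis $B\ne0$ only guarantees that some type occurs at all.

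The main obstacle is the first step. The $m$-fold isomorphism must be realized with the fixed hull: we need that $\sigma(I)$ and $I^{\oplus m}$ are both hulls of the same submodule $E^{\oplus m}$, and then invoke uniqueness of injective hulls relative to that submodule.
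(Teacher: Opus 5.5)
Your proof is correct and follows the paper's argument. The paper likewise extends a chosen isomorphism $E^{\oplus m}\cong E$ across the hulls by uniqueness of injective hulls; your coordinate permutation is just one specific choice of that isomorphism. It then splits off $R^F$ by subtracting the finite truncation, and gets infinite multiplicity from $\kappa=2\kappa$. Your appeal to injectivity of $R^F$ is unnecessary, since $\pi_F|_I$ already retracts onto $R^F\subseteq E\subseteq I$.
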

\begin{proof}
Choose an isomorphism $E^{\oplus m}\to E$. Since $m$ is
finite, $I^{\oplus m}$ is an injective hull of $E^{\oplus m}$.
Uniqueness of injective hulls extends the chosen isomorphism
to $I^{\oplus m}\to I$, and passage to quotients gives the
isomorphism of short exact sequences.

The finite truncation of any $x\in I$ lies in $R^F\subseteq E$,
so subtracting it gives $I=R^F\oplus I_{\bar F}$.
Essentiality of $E$ in $I$ gives essentiality of $E_{\bar F}$
in $I_{\bar F}$; inclusion induces the quotient isomorphism.
Finally, taking $m=2$ shows that the socle and top of $B$
are each isomorphic to their doubles. A nonzero multiplicity
$\kappa=2\kappa$ is infinite.
\end{proof}

\begin{theorem}\label{persistence}
Assume \eqref{F}, and let $X$ be a right module with $XS=0$.
Choose projective covers
\[
q_n:Q_n\twoheadrightarrow X_n,
\qquad X_0=X,\quad X_{n+1}=\ker q_n\quad(n\geq0).
\]
Then
\begin{equation}\label{syzygy-socle}
X_nS=0,\qquad
\Soc X_{n+1}=\Soc Q_n
 \cong(X_n/X_nJ)\otimes_DS.
\end{equation}
If $X\ne0$, every $X_{n+1}$ is an essential nonzero submodule
of $Q_n$, and
\begin{equation}\label{all-tor}
\Tor_n^R(X,R/S)\cong\Soc X_n
 =\Soc Q_{n-1}\ne0\qquad(n\geq1).
\end{equation}
Consequently, if $B\ne0$, then
\[
\operatorname{pd}_R B=\operatorname{pd}_R I=\infty,
\qquad
\ell_R\Tor_n^R(I,R/S)
 =\ell_R\Tor_n^R(B,R/S)=\infty
\quad(n\geq1).
\]
\end{theorem}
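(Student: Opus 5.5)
I will argue by induction on $n$, establishing \eqref{syzygy-socle} at each stage and then deriving \eqref{all-tor} by dimension shifting.

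\emph{Base and inductive step.} Suppose $X_nS=0$; for $n=0$ this is the hypothesis. Let $q_n:Q_n\to X_n$ be a projective cover. Then $Q_nS\subseteq X_{n+1}=\ker q_n$. By Lemma~\ref{structure}, $Q_nS=\Soc Q_n$, and this socle is essential in $Q_n$. So $\Soc Q_n\subseteq X_{n+1}\subseteq Q_n$, which gives $\Soc X_{n+1}=\Soc Q_n$.

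Minimality of the cover gives $X_{n+1}\subseteq Q_nJ$. Since $JS=0$, we get $X_{n+1}S\subseteq Q_nJS=0$, so the hypothesis propagates to $X_{n+1}$.

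For the tensor formula, Lemma~\ref{structure} gives $\Soc Q_n\cong Q_n\otimes_RS$. Because $JS=0$, this equals $(Q_n/Q_nJ)\otimes_DS$. Finally, $Q_n/Q_nJ\cong X_n/X_nJ$ because $q_n$ is a projective cover.

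\emph{Nonvanishing.} If $X\ne0$, then $X_0/X_0J\ne0$, so $Q_0\ne0$ and $\Soc Q_0\ne0$. Hence $X_1\supseteq\Soc Q_0$ is a nonzero essential submodule. Inductively, each $X_n\ne0$ gives $Q_n\ne0$, and then $X_{n+1}\supseteq\Soc Q_n\ne0$ with $X_{n+1}$ essential in $Q_n$.

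\emph{The Tor isomorphisms.} For $n=1$, Lemma~\ref{cover-detector-lemma} gives $\Tor_1^R(X_{n-1},R/S)\cong\Soc X_n$, since $X_n$ is the cover kernel of $X_{n-1}$. For larger $n$ I use dimension shifting along $0\to X_{k+1}\to Q_k\to X_k\to0$. Since $Q_k$ is projective, $\Tor_m^R(X_k,R/S)\cong\Tor_{m-1}^R(X_{k+1},R/S)$ for $m\ge2$. Hence $\Tor_n^R(X,R/S)\cong\Tor_1^R(X_{n-1},R/S)\cong\Soc X_n=\Soc Q_{n-1}$.

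The only point requiring care is that these isomorphisms respect the right $R$-action coming from the bimodule $R/S$. Connecting maps are natural in the second variable, so this should be routine.

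\emph{Consequences for $B$.} Take $X=B$; this is allowed because $BS=0$ by Theorem~\ref{quotient}. If $B\ne0$, every $\Tor_n^R(B,R/S)$ is nonzero, so $\operatorname{pd}B=\infty$. By \eqref{natural-tor}, $\Tor_n^R(I,R/S)\cong\Tor_n^R(B,R/S)$ for $n\geq1$, so $\operatorname{pd}I=\infty$ as well.

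For infinite length, Proposition~\ref{finite-blocks} gives $B\cong B^{\oplus2}$. Tor commutes with finite direct sums, so $\Tor_n^R(B,R/S)$ is isomorphic to its own double. A nonzero module isomorphic to its double cannot have finite length. Alternatively, the tops $X_n/X_nJ$ double each time, so $\Soc Q_{n-1}$ has infinite length.

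The main obstacle I expect is keeping the side conventions straight. The right action on the Tor groups must match the right-module socle, and I will check that Lemma~\ref{cover-detector-lemma} is applied to $X_{n-1}$ with precisely this action.
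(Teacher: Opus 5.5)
Your proposal is correct and follows the paper's proof essentially step for step. Cover minimality gives $X_{n+1}\subseteq Q_nJ$ and hence $X_{n+1}S=0$; Lemma~\ref{structure} places $\Soc Q_n=Q_nS$ inside $X_{n+1}$; dimension shifting together with Lemma~\ref{cover-detector-lemma} gives the Tor formula; and $B\cong B\oplus B$ forces infinite length. Your closing remark that ``the tops double each time'' is too vague as written: it would need uniqueness of minimal resolutions applied to $B\cong B\oplus B$. Your main length argument does not depend on it.
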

\begin{proof}
For $n\geq1$, minimality gives $X_n\subseteq Q_{n-1}J$,
so $X_nS=0$ since $JS=0$. The case $n=0$ is the hypothesis.
Thus $q_n(Q_nS)=0$, and Lemma~\ref{structure} gives
\[
\Soc Q_n=Q_nS\subseteq X_{n+1}.
\]
The socle of a submodule is its intersection with the socle
of the ambient module. This proves the equality in
\eqref{syzygy-socle}. Multiplication and the isomorphism on
tops induced by a cover give
\[
\Soc Q_n\cong Q_n\otimes_RS
 \cong(Q_n/Q_nJ)\otimes_DS
 \cong(X_n/X_nJ)\otimes_DS.
\]
If $X_n\ne0$, then $Q_n\ne0$ and its essential socle is
nonzero. Thus $X_{n+1}$ is nonzero and essential in $Q_n$.
Induction proves this for every $n$. Dimension shifting and
Lemma~\ref{cover-detector-lemma} give \eqref{all-tor}.
Apply this to $X=B$ and use \eqref{natural-tor}.
By Proposition~\ref{finite-blocks}, $B\cong B\oplus B$.
Additivity of Tor implies that each of these nonzero
semisimple Tor modules is isomorphic to its double.
Every simple type occurring in it therefore has infinite
multiplicity, proving the infinite-length assertion.
\end{proof}

\begin{corollary}\label{finite-tor-test}
Under \eqref{F}, for any fixed $n\geq1$, the following are equivalent:
\[
R\text{ is QF};\qquad
\ell_R\Tor_n^R(I,R/S)<\infty;\qquad
\ell_R\Ext_R^1(D,E)<\infty.
\]
\end{corollary}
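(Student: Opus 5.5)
The plan is to prove that each of the three conditions is equivalent to $B=0$. Theorem~\ref{quotient} already shows that $B=0$ is equivalent to $R$ being QF, so only two links remain: the Tor condition and the Ext condition.

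\emph{QF implies both finiteness conditions.} If $R$ is QF, then $B=0$ and $I=E$. Since $E$ is projective, $\Tor_n^R(I,R/S)=0$ for every $n\geq1$. Also $\Ext_R^1(D,E)\cong\Soc B=0$ by \eqref{ext-b}. Both lengths are therefore zero and in particular finite.

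\emph{The Tor condition implies QF.} Argue by contraposition. If $R$ is not QF, then $B\neq0$, and Theorem~\ref{persistence} gives $\ell_R\Tor_n^R(I,R/S)=\infty$ for every $n\geq1$. This covers any fixed $n$.

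\emph{The Ext condition implies QF.} Again suppose $B\neq0$. By \eqref{ext-b}, $\Ext_R^1(D,E)\cong\Soc B$. This isomorphism respects the module structure: the $D$-action on the Ext group (from precomposition) corresponds to the $R$-action on $\Soc B$, and both are semisimple, so length is preserved. A nonzero module over the semiprimary ring $R$ has a nonzero essential socle by Lemma~\ref{structure}, so $\Soc B\neq0$. By Proposition~\ref{finite-blocks}, every simple type occurring in $\Soc B$ then has infinite multiplicity. Hence $\Soc B$ has infinite length, and so does $\Ext_R^1(D,E)$.

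The only step that needs care is this identification of lengths in the Ext case. One must check that length measured via the right $D$-action on $\Ext_R^1(D,E)$ agrees with the $R$-length of $\Soc B$ under the evaluation isomorphism $\Hom_R(D,B)\to\Soc B$, $f\mapsto f(1+J)$. I would verify directly that this evaluation is $D$-linear once $\Soc B$ is regarded as a $D=R/J$-module. The rest is a formal assembly of Theorem~\ref{quotient}, Proposition~\ref{finite-blocks} and Theorem~\ref{persistence}.
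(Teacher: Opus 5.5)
Your proposal is correct and follows essentially the paper's route, which just combines Theorem~\ref{persistence}, \eqref{ext-b} and Proposition~\ref{finite-blocks} with the equivalence of $B=0$ and QF from Theorem~\ref{quotient}. The check you flag is immediate, since $(f\cdot d)(1+J)=f(d)=f(1+J)d$, so evaluation is $D$-linear and preserves length.
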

\begin{proof}
Use Theorem~\ref{persistence}, \eqref{ext-b}, and
Proposition~\ref{finite-blocks}.
\end{proof}

Short radical length of $B$ is therefore compatible with a
nonterminating projective resolution. In particular, the
bound $BJ^{d-1}=0$ supplies no homological contradiction.

\subsection{A presentation of the extension and socle descent}
We first identify the map representing the fixed extension
in a projective presentation.

\begin{proposition}\label{graph}
Lift $E\subseteq I$ to a projective cover $Q_I\to I$ and
split the lift, writing $Q_I=E\oplus Q_0$. There is a
projective cover $Q_0\to B$ with kernel $L$ and a map
$\tau:L\to E$ such that
\[
K_I=\{(-\tau(l),l):l\in L\},\qquad
\partial(\tau)=\eta_I.
\]
The map $\tau$ extends to $Q_0\to E$ if and only if $B=0$.
\end{proposition}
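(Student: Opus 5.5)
Let $c:Q_I\to I$ be a projective cover with kernel $K_I$. Since $E$ is projective, the inclusion $E\subseteq I$ lifts to $s:E\to Q_I$ with $cs$ the inclusion. The map $s$ is monic, so Lemma~\ref{structure} says it splits; write $Q_I=E\oplus Q_0$ with $E$ identified with $s(E)$. Composing $c|_{Q_0}$ with $p:I\to B$ gives a map $q:Q_0\to B$. It is surjective because $c$ is onto and $pc$ kills $E$. Put $L=\ker q$. For $l\in L$ we have $c(l)\in E$, and we define $\tau(l)=c(l)$ viewed in $E$. The signs in the statement then require $c(e,x)=e+c(x)$. With this, $(e,x)\in K_I$ if and only if $x\in L$ and $e=-\tau(x)$, because $E\cap c(Q_0)$ is detected inside $I$ and $c|_E$ is the inclusion. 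This gives the graph description of $K_I$.

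Next I check that $q$ is a projective cover, that is, that $L$ is superfluous, equivalently $L\subseteq Q_0J$. Suppose $L+N=Q_0$. Then $E\oplus N$ maps onto $I$, since its image contains $E$ and surjects onto $B$. As $c$ is a projective cover, $K_I$ is superfluous, so $E\oplus N=Q_I$ and hence $N=Q_0$. An alternative route: $Q_I/Q_IJ\cong I/IJ$, and the top of $Q_0$ maps onto the top of $B$ with kernel controlled by $E/EJ\to I/IJ$. Here I need injectivity of $E/EJ\to I/IJ$, which follows from purity of $E\subseteq I$ (Proposition~\ref{hull}) after tensoring with $D$. The first argument avoids this and I would use it.

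For $\partial(\tau)=\eta_I$, consider the pushout of $0\to L\to Q_0\to B\to0$ along $\tau$. Its middle term is $(E\oplus Q_0)/\{(-\tau(l),l)\}=Q_I/K_I\cong I$ via $c$. The induced maps $E\to I$ and $I\to B$ are the inclusion and $p$. So the pushout is $\eta_I$, and by the standard description of the connecting map $\partial:\Hom(L,E)\to\Ext^1(B,E)$ this is exactly $\partial(\tau)$, up to the sign convention fixed by choosing $-\tau$ in the graph.

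Finally, $\tau$ extends to $Q_0$ if and only if $\partial(\tau)=0$, by exactness of $\Hom(Q_0,E)\to\Hom(L,E)\to\Ext^1(B,E)$. That holds if and only if $\eta_I$ splits. If $\eta_I$ splits, then $E$ is a summand of $I$ and essential in it, so $I=E$ and $B=0$. Conversely, $B=0$ gives a split extension. The main point needing care is the sign and identification bookkeeping in the pushout. I expect no real obstacle there, only consistent conventions.
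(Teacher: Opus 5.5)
Your proposal is correct and follows the paper's proof: you lift and split $E$, read off $K_I$ as the graph of $-\tau$ with $\tau=c|_L$, identify $Q_I/K_I\cong I$ with the pushout along $\tau$, and get the extension criterion from essentiality of $E$ in $I$. There are two minor deviations. You check superfluity of $L$ directly from superfluity of $K_I$, whereas the paper projects $K_I\subseteq Q_IJ$ to get $L\subseteq Q_0J$ and then uses nilpotence of $J$. You also cite exactness of $\Hom_R(Q_0,E)\to\Hom_R(L,E)\to\Ext_R^1(B,E)$, where the paper writes down the retraction $[(e,x)]\mapsto e+t(x)$ explicitly.
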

\begin{proof}
Projectivity of $E$ gives the lift, which is monic and
splits by Lemma~\ref{structure}. Write the covering map as
$(e,q)\mapsto e+u(q)$. The induced epimorphism
$Q_0\to B$ has kernel $L=\{q:u(q)\in E\}$; set
$\tau(l)=u(l)$. This gives the displayed graph.
Since $K_I\subseteq(E\oplus Q_0)J$, its projection gives
$L\subseteq Q_0J$. Since $J$ is nilpotent, $Q_0J$ is
superfluous in $Q_0$, so $Q_0\to B$ is a projective cover.
The quotient by the graph is the pushout of
$0\to L\to Q_0\to B\to0$ along $\tau$, proving the
boundary formula with this sign convention.

An extension $t:Q_0\to E$ of $\tau$ induces a retraction
$I\to E$ by $[(e,q)]\mapsto e+t(q)$. Conversely a
retraction composed with $u$ extends $\tau$.
Essentiality makes such a retraction equivalent to $I=E$.
\end{proof}

\begin{theorem}\label{descent}
Put $A=R/S$ and
\[
I'=\ann_I(S),\qquad E'=\ann_E(S)=EJ=J^{(\N)}.
\]
Then
\begin{equation}\label{descent-identities}
I=E+I',\qquad E\cap I'=E',\qquad B\cong I'/E'.
\end{equation}
The module $J_A$ is a faithful, finitely cogenerated injective
cogenerator, and left multiplication induces an isomorphism
$A\xrightarrow{\sim}\End_A(J)$.
The right $A$-module $I'$ is injective and is
an injective hull of $E'$, and $E'\subseteq I'$ is pure
over $A$. After inflation to $R$, the original extension
$0\to E\to I\to B\to0$ is the pushout of
$0\to E'\to I'\to B\to0$ along $E'\hookrightarrow E$.
If $d\geq2$, the radical of $A$ has $(d-1)$st power zero.
\end{theorem}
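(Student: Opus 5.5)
The plan is to prove the three identities \eqref{descent-identities} by an extension argument based on FP-injectivity of $E$. The assertions about $J_A$ and $I'$ then follow from one principle: if $Q$ is an injective right $R$-module, then $\ann_Q(S)$ is an injective right $A$-module. This holds because $\Hom_A(X,\ann_Q(S))=\Hom_R(X,Q)$ for every right module $X$ with $XS=0$.

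First the identities. By Lemma~\ref{structure}, the elements $r\in R$ with $rS=0$ are exactly $\la(S)=J$. Hence $\ann_E(S)=J^{(\N)}=EJ$, and $E\cap I'=E'$ is immediate. For $I=E+I'$, take $x\in I$ and consider the map $S\to I$, $s\mapsto xs$. Its image lies in $IS\subseteq\Soc I=ES\subseteq E$, so it is a map of right ideals $S\to E$. Since $S_R$ has finite length, $R/S$ is finitely presented. FP-injectivity of the projective module $E$ (Lemma~\ref{structure}) then gives $\Ext^1_R(R/S,E)=0$, so this map extends to $R\to E$. Thus there is $e\in E$ with $xs=es$ for all $s\in S$, and $x-e\in I'$. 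Finally, $B=I/E=(E+I')/E\cong I'/(E\cap I')=I'/E'$. The same data show that $I$ is the pushout of $E\leftarrow E'\rightarrow I'$, because $I=E+I'$ and $E\cap I'=E'$. This is the asserted pushout description of $\eta_I$.

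Next, $J_A$ and $I'$. We have $J=\ann_{R_R}(S)$ because $\la(S)=J$, and $JS=0$. So the principle above makes $J_A$ and $I'=\ann_I(S)$ injective over $A$. Faithfulness holds because the right annihilator of $J$ is $\ra(J)=S$. For the cogenerator property, let $T$ be a simple $A$-module. By Lemma~\ref{structure}, $T$ is the socle of some $eR$. Since $eS\ne0$ while $TS=0$, we get $T\ne eR$, so $T\subseteq eJ\subseteq J$. Finite cogeneration follows because $\Soc J\subseteq S$ has finite length and is essential. For $A\cong\End_A(J)$: an $A$-endomorphism of $J$ is an $R$-map $J\to R$, which extends by injectivity of $R_R$ to left multiplication by some $r$. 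The kernel of $r\mapsto(r\cdot)$ is $\la(J)=S$, so the induced map $A\to\End_A(J)$ is bijective.

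Then the hull and purity. $E'$ is essential in $I'$ because $E$ is essential in $I$ and $E'=E\cap I'$. Together with $A$-injectivity of $I'$, this makes $I'$ an injective hull of $E'$ over $A$. For purity, show that $E'=J^{(\N)}$ is FP-injective over $A$. A map from a finitely generated submodule of a finitely presented $A$-module into $E'$ lands in a finite subsum $J^n$. That subsum is $A$-injective, so the map extends. FP-injective modules are absolutely pure, so $E'\subseteq I'$ is pure.

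Lastly, the radical of $A$ is $J/S$. Since $J^{d-1}J=0$, we have $J^{d-1}\subseteq\la(J)=S$, so $(J/S)^{d-1}=0$.

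I expect the main obstacle to be the identity $I=E+I'$, since it is the step that needs FP-injectivity of $E$ rather than just annihilator bookkeeping; everything else follows from Lemma~\ref{structure} and the principle above.
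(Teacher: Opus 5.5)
Your proposal is correct and follows essentially the same route as the paper. You get $I=E+I'$ by extending $s\mapsto xs$ from $S$ into $E$; your appeal to FP-injectivity of $E$ against the finitely presented $R/S$ just repackages the paper's argument that the image lies in a finite injective subsum. The rest also matches: the $\Hom_R(A,-)$ adjunction gives injectivity over $A$, the annihilator identities come from Lemma~\ref{structure}, $E'=E\cap I'$ is essential in $I'$, FP-injectivity of $J^{(\N)}$ gives purity, and $I=E+I'$ with $E\cap I'=E'$ gives the pushout.

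One small correction: $S$ need not lie in $J$ (for instance when some primitive $eR$ is simple), so the radical of $A$ should be written $(J+S)/S$, the image of $J$. Your computation then gives $\bigl((J+S)/S\bigr)^{d-1}=(J^{d-1}+S)/S=0$ unchanged.
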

\begin{proof}
For $x\in I$, the map $S\to I$, $s\mapsto xs$, lands in
$\Soc I=S^{(\N)}\subseteq E$. Since $S_R$ is finitely
generated, its image lies in a finite subsum $E_F$ of $E$.
This subsum is injective, so the map extends to $R\to E_F$.
If $e$ is the image of $1$, then $(x-e)S=0$. Hence
$x\in E+I'$. Also
\[
E\cap I'=\ann_E(S)=J^{(\N)}=EJ
\]
by $\la(S)=J$. This proves \eqref{descent-identities}.

The functor $\Hom_R(A,-)$ is right adjoint to exact
inflation from $A$-modules to $R$-modules, and preserves
injectives. Under evaluation at $1+S$, it identifies
$\Hom_R(A,R)$ with $J$ and $\Hom_R(A,I)$ with $I'$.
Thus both are injective over $A$.
The socle of $J_A$ is $J\cap S$, which is finite and essential.
Every simple $A$-module embeds in $R_R$ and, being annihilated
by $S$, its image lies in $\la(S)=J$.
Thus $J_A$ is an injective cogenerator.
Its right annihilator is $\ra(J)=S$, so it is faithful over $A$.
Every $A$-endomorphism of $J$ extends, as an $R$-map $J\to R$,
to left multiplication by some $a\in R$.
The kernel of $R\to\End_A(J)$ is $\la(J)=S$, proving the
asserted ring isomorphism.

Every nonzero submodule
of $I'$ meets $E$, and its intersection with $E$ lies in
$E'$. Hence $E'$ is essential in $I'$. The direct sum $E'$
of copies of the injective $A$-module $J$ is FP-injective;
therefore its inclusion in $I'$ is pure over $A$.

The map $E\oplus I'\to I$, $(e,y)\mapsto e+y$, is
surjective and has kernel
$\{(z,-z):z\in E'\}$. This identifies $I$ with the required
pushout, preserving the quotient map to $B$.
Finally, $\Jac A=(J+S)/S$ and $J^{d-1}\subseteq S$, so
$(\Jac A)^{d-1}=0$ when $d\geq2$.
\end{proof}

The descent keeps the quotient and the extension, and also
the faithful injective cogenerator and its displayed endomorphism
ring. It changes the coefficient module from $R$ to $J$. Neither
self-injectivity of $A$ nor projectivity of $J_A$ follows
from the construction. Thus Theorem~\ref{descent} is not an
induction proof of Faith's conjecture. In particular, the
all-degree comparison \eqref{natural-tor} over $R$ cannot
simply be transferred to $A$, since $E'$ has not been shown
projective over $A$.
Indeed, $J_A^{(\N)}$ is injective exactly when its essential
injective hull $I'$ equals it, which by \eqref{descent-identities}
is equivalent to $B=0$.
These conditions are also equivalent to $\Sigma$-injectivity
of $J_A$: one implication is immediate, and the other uses that
when $R$ is QF, $A$ is right artinian and direct sums of injective
$A$-modules are injective.
This reformulation retains the same unproved finiteness issue.

\Needspace{12\baselineskip}
\begin{example}\label{descent-boundary}
The two losses in the descent occur even when $R$ is QF.
Let $k$ be any field and take
\[
R=k[x,y]/(x^2,y^2),\qquad S=kxy,\qquad A=R/S.
\]
The coefficient of $xy$ in a product gives a nondegenerate
associative bilinear form on $R$, so $R$ is Frobenius.
The ring $A$ has basis $1,\bar x,\bar y$ and square-zero
radical. The map $k\bar x\to A$ sending $\bar x$ to
$\bar y$ cannot extend to $A\to A$, since left
multiplication sends $\bar x$ into $k\bar x$. Thus $A$
is not self-injective. The injective $A$-module $J=J(R)$
has length three and socle $kxy$, whereas $A$ has length
three and socle $k\bar x\oplus k\bar y$. If $J_A$ were
projective, it would be finite free over the local ring
$A$; length would force rank one, contradicting the socle
calculation. Thus the descended coefficient need not be
projective either.
\end{example}

\section{Socles of cyclic modules and Faith's conjecture}\label{homogeneous-section}
The finite-socle condition in Theorem~\ref{restriction} can
be tested on cyclic submodules when the ambient module is
injective and the ring satisfies \eqref{F}.

\subsection{A projectivity criterion}
\begin{theorem}\label{projectivity}
Assume \eqref{F}, and let $M$ be an injective right module.
The following conditions are equivalent:
\begin{enumerate}
\item $\rho_M:\End_R(M)\to\End_D(\Soc M)$ is open;
\item $\Soc U$ has finite length for every finitely generated
submodule $U\subseteq M$;
\item $\Soc(xR)$ has finite length for every $x\in M$;
\item every finite subset of $M$ is contained in a finitely
generated projective-injective direct summand of $M$;
\item $M$ is projective.
\end{enumerate}
\end{theorem}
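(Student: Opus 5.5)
The equivalence (i)$\Leftrightarrow$(ii) is exactly Theorem~\ref{restriction}, because an injective module is quasi-injective. The implication (ii)$\Rightarrow$(iii) is trivial, since $xR$ is finitely generated. I will therefore prove the cycle (iii)$\Rightarrow$(iv)$\Rightarrow$(v)$\Rightarrow$(ii).

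For (v)$\Rightarrow$(ii), I use Bass's description of projectives over the perfect ring $R$. A projective $M$ is a direct sum of finitely generated indecomposable projectives. Any finitely generated $U\subseteq M$ lies in a finite subsum $Q$. Then $\Soc U\subseteq\Soc Q=QS$ by Lemma~\ref{structure}, and $QS$ has finite length because $S$ has finite length.

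For (iii)$\Rightarrow$(iv), let $x_1,\dots,x_n\in M$ and put $U=\sum x_iR$. The sum $\bigoplus_i x_iR$ maps onto $U$, and each $\Soc(x_iR)$ has finite length. Since $U\cong(\bigoplus x_iR)/K$, I cannot pass directly to the socle of $U$. Instead I argue through the finite essential socle: every module has essential socle under \eqref{F}, so $\Soc U$ is essential in $U$. It then suffices to show that $\Soc U$ has finite length. I would do this by induction on $n$ using $0\to x_1R\to U\to U/x_1R\to0$, but $U/x_1R$ is not a submodule of $M$, so this needs care. A cleaner route is to use injectivity of $M$ cyclic by cyclic. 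Take an injective hull $H_1\subseteq M$ of $x_1R$. Its socle is $\Soc(x_1R)$, which has finite length, so $H_1$ is a finite sum of hulls of simples. By Lemma~\ref{structure} it is therefore finitely generated projective-injective. Since $H_1$ is injective, it is a summand: $M=H_1\oplus M_1$. The complement $M_1$ is again injective, and it inherits (iii) because it is a submodule of $M$. Write $x_2=h+m$ with $h\in H_1$ and $m\in M_1$, and replace $x_2$ by its component $m\in M_1$. Repeating the step gives $H_1\oplus\cdots\oplus H_n$ containing all the $x_i$, which is a finitely generated projective-injective summand of $M$. This step-by-step splitting is the main point; the possible difficulty is checking that the summands are genuinely complementary. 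That follows because each new hull is chosen inside the complement $M_k$.

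For (iv)$\Rightarrow$(v), the approach is to show $M$ is a direct sum of finitely generated projective-injectives. By (iv), $M$ is the directed union of its finitely generated projective-injective summands. Injectivity alone does not decompose $M$, so I would run the following process. Essential socle plus Zorn gives a maximal independent family of indecomposable injective submodules $H_\alpha$ with simple socles. Each $H_\alpha$ is the hull of a simple, hence finitely generated projective by Lemma~\ref{structure}. Then $\bigoplus H_\alpha$ is essential in $M$, since its socle is $\Soc M$. The sum $N=\bigoplus H_\alpha$ is projective, hence FP-injective. Let $x\in M$. By (iv), $x$ lies in a finitely generated injective summand $Q$. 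Then $\Soc Q$ is a finite subsum of $\Soc N$, so it lies in $N_F=\bigoplus_{\alpha\in F}H_\alpha$ for a finite $F$. Now $N_F$ is injective, so $N_F\subseteq_{\rm ess}$-hull considerations give $Q\cap N_F$ as a summand with essential socle. Comparing hulls, I expect to show $x\in N_F+(\text{complement})$, and hence $Q\subseteq N$ after choosing $Q$ as the hull of $\Soc Q$ lying over $N_F$. More directly, $\Soc(xR)$ is finite, so it lies in some $\Soc N_F$. Since $N_F$ is injective, $M=N_F\oplus C$ with $C\cap\Soc M$ complementary, and $x=n+c$. Then $cR\cap\Soc M\subseteq(xR+N_F)\cap C$; I must check this forces $c=0$ via essentiality, adjusting the choice of complement $C\supseteq\bigoplus_{\alpha\notin F}H_\alpha$. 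This bookkeeping is the main obstacle. Once $M=N$ is established, (v) follows. Alternatively, (iv) gives $M$ as a directed union of projective summands, which makes $M$ flat and hence projective by Bass, since $R$ is perfect. This is the route I would try first.
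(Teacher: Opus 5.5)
Your proof is correct and is essentially the paper's. For (iii)$\Rightarrow$(iv) you successively split off finitely generated projective-injective hulls of the projected cyclic submodules inside the injective complements, and for (iv)$\Rightarrow$(v) you view $M$ as a directed union of projective summands, which is directed because (iv) applies to the union of two finite generating sets, so $M$ is flat and hence projective by Bass. Discard the Zorn-lemma detour, since its hoped-for conclusion $c=0$ is false in general: over $R=k[t]/(t^2)$ with $M=R^2$ and $x=(t,1)$, one has $\Soc(xR)=0\oplus kt=\Soc(0\oplus R)$ although $x\notin 0\oplus R$.
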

\begin{proof}
Theorem~\ref{restriction} gives (i)$\Leftrightarrow$(ii),
and (ii)$\Rightarrow$(iii) is immediate.
Suppose (iii), and take $x_1,\ldots,x_m\in M$.
The hull of $x_1R$ is the hull of its finite essential
socle, hence a finite direct sum of hulls of simples.
By Lemma~\ref{structure} it is finitely generated,
projective and injective. Extending $x_1R\hookrightarrow M$
across this hull gives a monomorphism, since $x_1R$ is
essential in its hull. Its image $P_1$ splits in $M$.

Write $M=P_1\oplus M_1$, and project $x_2$ into $M_1$.
Its cyclic submodule again has finite socle by (iii).
As $M_1$ is injective, the same argument embeds its finite
projective-injective hull as a summand of $M_1$.
Continuing with the projections of $x_3,\ldots,x_m$ gives
a finite sum of such summands containing the original
$x_1,\ldots,x_m$. This proves (iv).

Under (iv), the finite projective submodules of $M$ form a
directed covering family: generators of two of them are
contained in a third. Their union is $M$, so $M$ is flat,
and perfectness makes it projective. Conversely, a finite
subset of a projective lies in a finite sum of finitely
generated indecomposable projectives. This finite sum has
finite socle, and so does each of its submodules.
Thus (v)$\Rightarrow$(ii), completing the proof.
\end{proof}

The passage from (iii) to (iv) uses injectivity of the
successive complements. It does not assert that finite
socles are preserved by arbitrary quotients.
Injectivity cannot be weakened to quasi-injectivity here:
over $k[t]/(t^2)$, the simple module is quasi-injective and
its socle restriction is the identity, but it is not
projective.

\subsection{A cyclic module of Loewy length two}
The preceding criterion reduces the remaining finite-tuple
condition in the fixed hull $I$ to a single element. A
failure of the QF conclusion has a particularly uniform
cyclic witness.

\begin{theorem}\label{homogeneous}\label{two-layer-witness}
Assume \eqref{F}, and retain the fixed inclusion
$E=R^{(\mathbb N)}\subseteq I\subseteq R^{\mathbb N}$.
The following are equivalent:
\begin{enumerate}
\item $R$ is not QF;
\item there are a primitive idempotent $e\in R$, simple right
modules $Q=eR/eJ$ and $T$, and a cyclic submodule $U\subseteq I$
such that
\begin{equation}\label{homogeneous-witness}
U/UJ\cong Q,\quad UJ=\Soc U=U\cap E\cong T^{(\N)},\quad UJ^2=0.
\end{equation}
Moreover, $U$ can be chosen essential in an injective direct
summand $H$ of $I$ with $\Soc H=\Soc U$.
\end{enumerate}
When these equivalent conditions hold, the construction below
gives a nonzero simple submodule $(E+U)/E\cong Q$ of $B=I/E$
whose pullback extension is represented by a surjection
\[
f:eJ\longrightarrow T^{(\mathbb N)}\subseteq E,
\qquad f(eJ^2)=0,
\]
which has no extension $eR\to E$.
For $\Gamma=\End_R(H)$,
\[
\ker\rho_H=\Jac\Gamma,\quad (\Jac\Gamma)^d=0,\quad
\Gamma/\Jac\Gamma\cong\End_R(T^{(\N)}).
\]
The quotient topology is discrete, whereas the target finite
topology is nondiscrete.
\end{theorem}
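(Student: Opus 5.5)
The plan is to prove (ii)$\Rightarrow$(i) quickly, then build the witness $U$ in (i)$\Rightarrow$(ii) by successive corrections of one element of $I$, and finally read off the endomorphism-ring statements from Theorem~\ref{restriction}.

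\emph{(ii)$\Rightarrow$(i).} If $U$ satisfies \eqref{homogeneous-witness}, then $U\cap E=UJ\ne U$, so $U\not\subseteq E$ and $B\ne0$. Theorem~\ref{quotient} then says $R$ is not QF. Alternatively, $\Soc U\cong T^{(\N)}$ is an infinitely generated socle of a cyclic submodule of $I$, so $I$ is not projective by Theorem~\ref{projectivity}.

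\emph{(i)$\Rightarrow$(ii): a first representative.} If $R$ is not QF, then $B\ne0$. By \eqref{ext-b} and Proposition~\ref{finite-blocks}, $\Soc B\ne0$, so $B$ has a simple submodule of the form $Q=eR/eJ$. Choose $x\in I$ with $x=xe$ whose class spans it. Then $xJ\subseteq E$. Since $xR/xJ$ is a quotient of $Q$ and $xR\not\subseteq E$, we get $xR\cap E=xJ$.

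The map $f_0:eJ\to E$, $j\mapsto xj$, has no extension $g:eR\to E$. Indeed, such a $g$ would make $(x-g(e))R$ a quotient of $Q$. It would then lie in $\Soc I\subseteq E$, forcing $x\in E$.

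\emph{Cutting down to Loewy length two.} Decompose $I$ using its socle. Since $\Soc I=S^{(\N)}$ is semisimple, for any splitting $\Soc I=W_1\oplus W_2$ there is a decomposition $I=H(W_1)\oplus H(W_2)$ into injective hulls. Take $W_1\supseteq\Soc(xJ^2)$ and $W_2$ a complement containing a complement of $\Soc(xJ^2)$ in $\Soc(xJ)$. Then $xJ^2\subseteq H(W_1)$, because $xJ^2$ is essential over its socle inside $I$. Let $\pi$ be the projection onto $H(W_2)$ and put $x'=\pi(x)$. Then $x'J^2=0$.

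It remains to check that $x'\notin E$. This is the step I expect to be the main obstacle. The idea is that $x'\in E$ would mean $\pi f_0$ extends, since a finitely supported map extends into a finite injective subsum $E_F$. The complementary component $(1-\pi)f_0$ kills nothing useful, so one must show that the nonextendability of $f_0$ is concentrated on $eJ/eJ^2$. I would try to get this by first subtracting an extension of $f_0|_{eJ^2}$ to $eR\to E$, when one exists. If none exists, I would induct downwards on the Loewy layers of $eJ$, using that maps from a semisimple layer into $E$ extend exactly when they have finite support.

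\emph{Making the socle homogeneous.} Once $x'J^2=0$ and $x'\notin E$, the image $x'J$ is semisimple of infinite length, since it is the image of a nonextendable map out of $eJ/eJ^2$. There are finitely many simple types, so some isotypic component $T$ of $x'J$ has infinite length and still witnesses nonextendability. Projecting once more onto the hull $H$ of that $T$-isotypic part gives $U=x''R\subseteq H$ with $UJ=\Soc U=U\cap E$. Passing to a countable subfamily of the $T$-summands, I would arrange $UJ\cong T^{(\N)}$ and $\Soc H=\Soc U$, so $U$ is essential in $H$.

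The map $f:eJ\to T^{(\N)}$, $j\mapsto x''j$, is surjective and kills $eJ^2$. It is nonextendable by the same argument as for $f_0$. It represents the class of $(E+U)/E\cong Q$ by the pullback description, as in Proposition~\ref{graph}.

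\emph{The endomorphism ring of $H$.} For $\Gamma=\End_R(H)$:
\begin{enumerate}
\item Theorem~\ref{restriction} gives $\ker\rho_H=\Jac\Gamma$, since the socle of $H$ is essential.
\item Remark~\ref{nilpotent-kernel} gives $(\Jac\Gamma)^d=0$.
\item Surjectivity of $\rho_H$ gives $\Gamma/\Jac\Gamma\cong\End_R(T^{(\N)})$.
\end{enumerate}
For the topologies, \eqref{image} applied to the cyclic $U$ gives $\rho_H(\mathcal N_H(U))=\mathcal N_W(\Soc U)=\mathcal N_W(W)=0$, where $W=\Soc H$. Thus the quotient topology on $\Gamma/\Jac\Gamma$ is discrete. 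The finite topology on $\End(T^{(\N)})$ is not discrete, because $T^{(\N)}$ has infinite length.
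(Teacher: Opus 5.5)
Your argument has a genuine gap exactly where you anticipate it: nothing in your construction controls whether $x'=\pi(x)$ lies in $E$. The projection does not preserve the class of $x$ in $B$. You replace $x$ by $x-x_1$ with $x_1\in H(W_1)$, and $x_1$ need not lie in $E$. If the nonextendability of $f_0$ is carried by the deeper layers, $x'$ may well land in $E$.

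Your suggested repair covers only one case. If $f_0|_{eJ^2}$ extends to some $g:eR\to E$, then $x-g(e)$ represents the same class, is fixed by $e$, and kills $J^2$. The rest of your argument then goes through, and the isotypic part is automatically countable since $\Soc I=S^{(\N)}$. If no such $g$ exists, ``inducting downwards'' produces nothing. Subtracting an extension on the deepest extendable layer leaves an obstruction on some $eJ^k/eJ^{k+1}$ with $k\geq2$. But every element of $xJ^{k-1}$ already lies in $E$, so you obtain no element outside $E$ killed by $J^2$.

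There is no reason a prescribed simple submodule of $B$ should have such a representative. Theorem~\ref{descent} only supplies representatives killed by $S\supseteq J^{d-1}$, which gives $J^2$-annihilation only when $d\leq3$. The theorem itself claims only that the construction yields \emph{some} simple submodule of $B$ with this property.

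The missing idea is the radical-layer argument from the proof of Lemma~\ref{countable}. If $J/J^2$ had finite length, lifting its generators would make $J$ finitely generated and every $J^n/J^{n+1}$ of finite length. Then $R_R$ would be artinian, hence QF. So non-QF gives a primitive $e$ with $eJ/eJ^2$ of infinite length, and a simple $T$ of infinite multiplicity in it.

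The paper then does not refine a given class; it builds the witness from scratch. It takes a surjection $eJ/eJ^2\twoheadrightarrow T^{(\N)}$ placed in distinct coordinates of $\Soc E$, extends it by injectivity of $I$ to $h:eR\to I$, and sets $U=h(eR)$. Then $UJ=f(eJ)$ and $UJ^2=0$ are immediate. Also $U\cap E=UJ$, since otherwise $U\subseteq E$, and cyclic submodules of $E$ have finite socle. Your analysis of the nonextendable case would show that some $eJ^k/eJ^{k+1}$ has infinite length, but getting down to $k=1$ again needs this argument.

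There is also a smaller error. The inclusion $xJ^2\subseteq H(W_1)$ does not follow from essentiality of $\Soc(xJ^2)$ in $xJ^2$, because hulls of $W_1$ inside $I$ are not unique as submodules. The graph of a nonzero map $H(W_1)\to H(W_2)$ vanishing on $W_1$ is another such hull. You would have to choose $H(W_1)$ to contain a maximal essential extension of $xJ^2$ in $I$.

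Your implication (ii)$\Rightarrow$(i), and your treatment of $\Gamma$ and the two topologies, agree with the paper.
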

\begin{proof}
The radical-layer argument in the proof of
Lemma~\ref{countable} shows that finite length of $J/J^2$
would make $R$ right artinian and hence QF.
Consequently, under (i), $J/J^2$ has infinite length.
Decompose $R_R$ into finitely many primitive projectives.
For one of them, $eJ/eJ^2$ has infinite length.
Since $D=R/J$ has only finitely many simple types, one simple
module $T$ occurs there with infinite multiplicity.
Choose a surjection
\[
eJ/eJ^2\twoheadrightarrow L\cong T^{(\mathbb N)}
\]
and embed $L$ in $\Soc E$, using distinct coordinates.
Write $f:eJ\to L\subseteq I$ for the resulting map.

Injectivity of $I$ extends $f$ to $h:eR\to I$.
Put $U=h(eR)$. The kernel of $h$ is a proper submodule of
the local module $eR$, so it is contained in $eJ$.
Hence
\[
U/UJ\cong eR/eJ=Q,\qquad UJ=h(eJ)=L,
\qquad UJ^2=f(eJ^2)=0.
\]
In particular, $L$ is a nonzero semisimple submodule of $U$.
If $L$ were a proper submodule of $\Soc U$, choose a simple
submodule $N\subseteq\Soc U$ with $N\cap L=0$.
Its image in the simple module $U/L$ is nonzero, so
$U=L\oplus N$; then $UJ=0$, contradicting $UJ=L\ne0$.
Thus $\Soc U=L$, which is essential in $U$ by semiprimaryness.

We have $L\subseteq U\cap E$. If this inclusion were proper,
simplicity of $U/L$ would give $U\subseteq E$.
But a cyclic submodule of $E$ lies in a finite free subsum,
whose socle has finite length. This contradicts
$\Soc U\cong T^{(\mathbb N)}$.
Therefore $U\cap E=L$.
An injective hull of $U$ embeds in $I$ by essentiality and
injectivity of $I$; its image $H$ splits because its source
is injective. Essentiality gives $\Soc H=\Soc U$.
This proves (ii). Conversely, over a QF ring every cyclic
module has finite length, excluding (ii).

Any map $eR\to E$ has image in a finite coordinate subsum,
so its restriction cannot equal the surjection $f$ onto $L$.
Thus $\partial(f)\in\Ext_R^1(Q,E)$ is nonzero.
The map
\[
E\oplus eR\longrightarrow E+U,\qquad (a,r)\longmapsto a+h(r),
\]
has kernel $\{(-f(r),r):r\in eJ\}$:
if $h(r)\in E$, then $h(r)\in U\cap E=L$, and hence
$r\in eJ$ because $h^{-1}(L)=eJ$.
Thus $0\to E\to E+U\to Q\to0$ is the pushout representing
$\partial(f)$, and also the pullback of $0\to E\to I\to B\to0$
along the inclusion $(E+U)/E\hookrightarrow B$.
The radical assertions follow from Theorem~\ref{restriction}
and Remark~\ref{nilpotent-kernel}. Since $U$ is cyclic,
$\mathcal N_H(U)$ is open, and the neighborhood image formula
gives
\[
\rho_H(\mathcal N_H(U))
 =\mathcal N_{\Soc H}(\Soc U)=0.
\]
Consequently $\ker\rho_H$ is open and the quotient topology
is discrete. A finitely generated submodule of $T^{(\N)}$
has finite length and has a nonzero semisimple complement.
Projection onto that complement is a nonzero endomorphism
annihilating the given submodule. Hence zero is not open
in the target finite topology.
\end{proof}

The simple top $Q$ and the socle type $T$ need not be isomorphic.

\begin{proposition}\label{socle-ext-image}
Let $e$ be primitive, $Q=eR/eJ$, and $V_e=eJ/eJ^2$.
Inclusion $i:S^{(\N)}\hookrightarrow E$ gives an isomorphism
of abelian groups
\begin{equation}\label{socle-ext-quotient}
\operatorname{Im}\Ext_R^1(Q,i)\cong
\frac{\Hom_D(V_e,S^{(\N)})}
      {\Hom_D(V_e,S)^{(\N)}}.
\end{equation}
The denominator consists of the maps with finitely many
nonzero coordinate maps. If $V_e$ has infinite length,
the displayed image is nonzero.
More precisely, for a fixed quotient $q:V_e\twoheadrightarrow
T^{(\N)}$, an embedding $\tau:T\hookrightarrow S$, and
$\Delta=\End_R(T)$, there is an additive injection
\begin{equation}\label{reduced-product-ext}
\Delta^{\N}/\Delta^{(\N)}\lhook\joinrel\longrightarrow
\operatorname{Im}\Ext_R^1(Q,i),\qquad
[(\lambda_n)]\longmapsto[(\tau\lambda_nq_n)_n].
\end{equation}
\end{proposition}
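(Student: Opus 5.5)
Since $eR$ is projective, the sequence $0\to eJ\to eR\to Q\to0$ gives, for any module $X$, an identification $\Ext_R^1(Q,X)\cong\Hom_R(eJ,X)/\{\text{restrictions of maps }eR\to X\}$, natural in $X$. I will apply this with $X=S^{(\N)}$ and $X=E$; the induced map $\Ext^1(Q,i)$ is then composition with $i$.

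First I describe $\Hom_R(eJ,S^{(\N)})$. Since $SJ=0$, every map $eJ\to S^{(\N)}$ kills $eJ\cdot J=eJ^2$. Hence it factors through $V_e$, a $D$-module, and
\[
\Hom_R(eJ,S^{(\N)})=\Hom_D(V_e,S^{(\N)}).
\]
The image of $\Ext^1(Q,i)$ is the quotient of this group by those $g$ for which $ig$ extends to a map $eR\to E$. Such an extension has image in a finitely generated submodule of $E$, hence in a finite coordinate subsum. So $g$ has only finitely many nonzero coordinate maps, i.e.\ $g\in\Hom_D(V_e,S)^{(\N)}$. Conversely, suppose $g$ has finite support $F$. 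Each coordinate map $eJ\to S\subseteq R$ extends to $eR\to R$, because $R_R$ is injective; this gives an extension $eR\to R^F\subseteq E$. Thus the kernel of $\Hom_D(V_e,S^{(\N)})\to\Ext^1_R(Q,E)$ is exactly $\Hom_D(V_e,S)^{(\N)}$, which is \eqref{socle-ext-quotient}. The group is additive: $\Hom$ into $S^{(\N)}$ is naturally a subgroup of $\Hom(V_e,S)^{\N}$ consisting of maps whose coordinates are locally finitely supported.

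Next, the injection \eqref{reduced-product-ext}. Write $q=(q_n)$ with $q_n:V_e\to T$ the coordinates. Given $(\lambda_n)\in\Delta^{\N}$, the map $v\mapsto(\tau\lambda_nq_n(v))_n$ lands in $S^{(\N)}$, because $q(v)$ has finite support. It is additive in $(\lambda_n)$, and it sends $\Delta^{(\N)}$ into the denominator. For injectivity, suppose $(\tau\lambda_nq_n)_n$ has finite support. Since $q$ is surjective onto $T^{(\N)}$, each $q_n$ is surjective. Then $\tau\lambda_nq_n=0$ forces $\lambda_n=0$, because $\tau$ is monic. So $(\lambda_n)\in\Delta^{(\N)}$.

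Finally, suppose $V_e$ has infinite length. Since $D$ has finitely many simple types, some simple $T$ occurs in $V_e$ with infinite multiplicity, which gives a surjection $q:V_e\twoheadrightarrow T^{(\N)}$. By Lemma~\ref{structure}, $T$ occurs in $S$, so an embedding $\tau$ exists. The element $(1,1,\ldots)$ has nonzero class in $\Delta^{\N}/\Delta^{(\N)}$, so the image is nonzero.

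The only delicate step is the extension in the converse of the kernel computation. It uses injectivity of $R_R$ coordinatewise, together with finiteness of the support, to land in $E$ rather than in $P$.
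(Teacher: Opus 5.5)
Your proposal is correct and follows essentially the same route as the paper. You present $Q$ by $eJ\subseteq eR$, identify the kernel of $\Ext_R^1(Q,i)$ with the finitely supported maps (cyclicity of $eR$ in one direction, coordinatewise extension by right self-injectivity in the other), and get injectivity of the reduced-product map from surjectivity of the $q_n$ and monicity of $\tau$. The only cosmetic difference is that you compute the image directly as $\Hom_D(V_e,S^{(\N)})$ modulo the preimage of the restriction subgroup of $E$, without first noting, as the paper does, that $\Ext_R^1(Q,S^{(\N)})$ itself equals $\Hom_D(V_e,S^{(\N)})$.
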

\begin{proof}
Since $eR$ is projective and every map $eR\to S^{(\N)}$
kills $eJ$, its presentation of $Q$ identifies
\[
\Ext_R^1(Q,S^{(\N)})=\Hom_D(V_e,S^{(\N)}).
\]
The kernel of $\Ext_R^1(Q,i)$ consists of maps whose inclusion
in $E$ extends to $eR$. Such an extension has only finitely
many nonzero coordinates, because $eR$ is cyclic.
Conversely, every finite family of maps $eJ\to S$ extends
coordinatewise to $eR\to R$ by right self-injectivity.
The finitely many extensions combine to a map $eR\to E$.
This proves the kernel description and \eqref{socle-ext-quotient}.
If $V_e$ has infinite length, a quotient onto $T^{(\N)}$
followed by the coordinate embeddings into $S^{(\N)}$
has infinitely many nonzero coordinate maps, and gives a
nonzero class in the quotient.
In the last formula each $q_n$ is onto, and $\tau$ is monic,
so its $n$th coordinate is nonzero exactly when $\lambda_n\ne0$.
The kernel description therefore proves
\eqref{reduced-product-ext}. Only its additive structure is
asserted; no action of $\Delta$ on all of $S$ is needed.
\end{proof}

\paragraph{Comparison with earlier constructions.}
The countable sum--hull--product construction and the
$\Sigma$-injectivity criterion are classical
\cite[Corollary 7]{HZ}; see also Levine \cite{Levine} on
hulls of semisimple modules. Nicholson--Yousif
\cite[proof of Proposition 4]{NY97} give the second-socle
pairing, and their Theorem 2 gives a QF criterion from left
countable generation of the whole second socle. Iovanov
\cite{Iov} uses Loewy-layer cardinalities, while
Ara--Nicholson--Yousif \cite{ANY} require compatible bimodule
pairings in their local construction. Here the retained data
are the fixed $E\subseteq I$, the primitive top $Q$, the
coordinate quotient $T^{(\N)}$, and its Baer class.
Proposition~\ref{socle-ext-image} computes the kernel of the
induced Ext map, and Proposition~\ref{opposite-second-layer-test}
uses the same quotient for the left extension obstruction.
Neither left countable generation of the whole second socle
nor compatibility of the two actions is assumed to follow
from \eqref{F}. Section~\ref{nonopen-section} gives an explicit
application with a different injective coefficient.

\begin{corollary}\label{second-layer-test}
Under \eqref{F}, $R$ is QF if and only if every cyclic
submodule $U\subseteq I$ with $UJ^2=0$ and simple top
has finite socle.
\end{corollary}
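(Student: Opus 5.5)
The corollary follows by combining Theorem~\ref{homogeneous} with the finite-length behaviour of cyclic modules over a QF ring. I prove the two implications separately.

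\emph{Necessity.} Suppose $R$ is QF. Every cyclic module is a quotient of $R_R$, and $R_R$ is artinian and noetherian. Hence every cyclic $U\subseteq I$ has finite length, and so $\Soc U$ has finite length. No hypothesis on $UJ^2$ or on the top of $U$ is needed for this direction. Alternatively, Theorem~\ref{quotient} gives $I=E$ when $R$ is QF. A cyclic submodule of $E$ then lies in a finite free subsum $R^F$, and the socle of $R^F$ is $S^F$, which has finite length by Lemma~\ref{structure}. Either argument suffices.

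\emph{Sufficiency.} I argue by contraposition. Suppose $R$ is not QF. Theorem~\ref{homogeneous} (i)$\Rightarrow$(ii) supplies a cyclic $U\subseteq I$ with $U/UJ\cong Q$ simple, $UJ^2=0$, and $\Soc U\cong T^{(\N)}$. This socle has infinite length, since $T\ne0$. Thus $U$ is a cyclic submodule of $I$ with $UJ^2=0$ and simple top whose socle is not finite, and the stated condition fails. This proves the equivalence.

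The substantive step is therefore entirely contained in Theorem~\ref{homogeneous}: the extension of the surjection $f:eJ\to T^{(\N)}$ across $eR$ by injectivity of $I$, and the identification $\Soc U=UJ$. The corollary itself adds only one observation, namely that the witness $U$ satisfies precisely the three restrictions imposed in the statement: it is cyclic, it has simple top, and its radical squares to zero. The only point to check carefully is that ``finite socle'' means finite length, which is the convention of Theorem~\ref{projectivity}(iii). With that reading, the corollary strengthens the test in Theorem~\ref{projectivity}(iii), applied to $M=I$ together with Theorem~\ref{quotient}, by restricting it to Loewy length two with simple top.
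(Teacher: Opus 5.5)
Your proof is correct and matches the paper's own argument. The forward direction follows from artinianity of a QF ring, so cyclic modules have finite length; the reverse direction follows by contraposition from the witness $U$ of Theorem~\ref{homogeneous}, whose socle $T^{(\N)}$ has infinite length (your alternative via $I=E$ from Theorem~\ref{quotient} is also valid but not needed).
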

\begin{proof}
The forward implication follows from artinianity; the
reverse implication follows from Theorem~\ref{two-layer-witness}.
\end{proof}

\begin{remark}\label{local-finite-extension-gap}
The construction specifies a family of maps, not just
separate extension problems. Write $f=(f_n):eJ\to S^{(\mathbb N)}$.
Each $f_n$ is nonzero and the family is pointwise finite on $eJ$.
Right self-injectivity extends each coordinate to
$h_n:eR\to R$, but $h_n(e)\ne0$ for every $n$.
Thus no such coordinate extensions form a pointwise finite
family on $eR$. The actual extension into $I$ exists, whereas
an extension into $E$ does not.
The second-layer test therefore narrows the remaining task;
it does not turn $R/J^2$ into a self-injective ring or permit
an application of a radical-square-zero self-injective theorem.
\end{remark}

\begin{corollary}\label{support-criterion}
Under \eqref{F}, the following are equivalent:
$R$ is QF; $\rho_I$ is open; every cyclic submodule of $I$
has finite socle; and
\begin{equation}\label{remaining}
\text{for every }x\in I\text{ there is a finite }F\subseteq\N
\text{ such that }xR\cap S^{(\N)}\subseteq S^{(F)}.
\end{equation}
\end{corollary}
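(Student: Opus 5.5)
The plan is to prove the equivalences as a cycle, reusing Theorem~\ref{projectivity} for $M=I$ and Theorem~\ref{quotient} for the passage to QF. First, $R$ is QF if and only if $I$ is projective, by the last assertion of Theorem~\ref{quotient}. Since $I$ is injective, Theorem~\ref{projectivity} applies with $M=I$. Its conditions (i), (iii) and (v) give at once that $\rho_I$ is open, that every cyclic submodule of $I$ has finite socle, and that $I$ is projective are all equivalent. It then remains to show that \eqref{remaining} is equivalent to finiteness of the socle of each cyclic submodule $xR\subseteq I$.

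For this last equivalence, the key input is $\Soc I=S^{(\N)}$ from Proposition~\ref{hull}. Hence for $x\in I$,
\[
\Soc(xR)=xR\cap\Soc I=xR\cap S^{(\N)}.
\]
If \eqref{remaining} holds for $x$, then $\Soc(xR)\subseteq S^{(F)}$. By Lemma~\ref{structure}, $S_R$ has finite length, so $S^{(F)}$ has finite length and $\Soc(xR)$ does as well.

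Conversely, suppose $\Soc(xR)$ has finite length. Then it is finitely generated. Each generator lies in $S^{(\N)}$ and therefore has finite support. Let $F$ be the union of these finitely many supports. Then $xR\cap S^{(\N)}=\Soc(xR)\subseteq S^{(F)}$, since $S^{(F)}$ is a submodule.

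I do not expect a real obstacle here; the content lies in the earlier theorems. The one point to check is that the socle identity $\Soc(xR)=xR\cap S^{(\N)}$ uses the fixed inclusion $I\subseteq R^{\N}$, so that ``support'' makes sense coordinatewise. That is exactly how Proposition~\ref{hull} sets up $I$. I would also note that Theorem~\ref{two-layer-witness} gives an alternative route for the direction ``(cyclic finite socle) $\Rightarrow$ QF''. It is not needed, since Theorem~\ref{projectivity} together with Theorem~\ref{quotient} already closes the cycle.
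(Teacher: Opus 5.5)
Your proposal is correct and follows essentially the same route as the paper's proof. Theorem~\ref{quotient} identifies QF with projectivity of $I$, and Theorem~\ref{projectivity} for $M=I$ gives the equivalence with openness of $\rho_I$ and with finite socles of cyclic submodules. Then $\Soc I=S^{(\N)}$ identifies $xR\cap S^{(\N)}$ with $\Soc(xR)$, and finite length of this socle is equivalent to its having a common finite support.
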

\begin{proof}
Combine Theorems~\ref{quotient} and \ref{projectivity}.
Since $\Soc I=S^{(\N)}$, the intersection in
\eqref{remaining} is $\Soc(xR)$. A finite-length submodule
of $S^{(\N)}$ has finite generators and therefore a common
finite support. Conversely, $S^{(F)}$ has finite length.
\end{proof}

\subsection{The second socle and the opposite extension problem}
Put $V=J/J^2$ and $L_2=\la(J^2)=\ra(J^2)$, the second
socle of either regular module. Here $aJ^2=0$ implies
$aJ\subseteq S$, then $JaJ=0$, $Ja\subseteq S$ and
$J^2a=0$; the converse is symmetric.
The following is the second-layer case of the classical
annihilator-duality calculation; see Nicholson--Yousif
\cite[proof of Proposition 4]{NY97}.

\begin{proposition}\label{second-layer-duality}
Multiplication induces an isomorphism of $D$-$D$-bimodules
\[
 \Phi:L_2/S\xrightarrow{\ \sim\ }
 V^*=\Hom_D(V_D,S_D),\qquad
 \Phi(a+S)(j+J^2)=aj.
\]
On the dual the actions are
$(d f)(v)=d f(v)$ and $(f d)(v)=f(dv)$.
The opposite multiplication map
\[
 \varepsilon:V\longrightarrow
 \Hom_{D^{\mathrm{op}}}(L_2/S,S),\qquad
 \varepsilon(b+J^2)(a+S)=ab,
\]
is injective.
\end{proposition}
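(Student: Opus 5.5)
The plan is to treat $\Phi$ via the cogenerator property of $R_R$ and the annihilator identities of Lemma~\ref{structure}, and then to deduce injectivity of $\varepsilon$ from the surjectivity of $\Phi$.

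\emph{Well-definedness.} For $a\in L_2$ and $j\in J$, we have $aj\in S$ because $aJ^2=0$ means $aJ\subseteq\ra(J)=S$. If $j\in J^2$, then $aj=0$. If $a\in S$, then $aJ=SJ=0$. Thus $\Phi(a+S)$ is a well-defined function $V\to S$. It is right $D$-linear, since $(aj)r=a(jr)$ and $J$ acts trivially on both $V$ and $S$. The displayed bimodule actions hold by associativity: $\Phi(da)(v)=d\,\Phi(a)(v)$ and $\Phi(ad)(v)=\Phi(a)(dv)$. These make sense because $L_2$ is a two-sided ideal and $S$ is a $D$-$D$-bimodule, using $JS=SJ=0$.

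\emph{Injectivity of $\Phi$.} Suppose $aJ=0$. Then $a\in\la(J)=S$, so the kernel of $\Phi$ is exactly $S/S$.

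\emph{Surjectivity of $\Phi$.} This is the step I expect to need the most care. Given a $D$-linear $f:V\to S$, compose it with $J\to V$ to obtain an $R$-linear map $J\to S\subseteq R$. Right self-injectivity extends this to $R\to R$, which is left multiplication by some $a\in R$. Then $aj=f(j+J^2)\in S$ for all $j\in J$, and $aJ^2=0$. Hence $a\in\la(J^2)=L_2$ and $\Phi(a+S)=f$. The point to check is that $\la(J^2)$ really equals the right-hand description $L_2$; the paragraph before the statement gives this.

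\emph{Injectivity of $\varepsilon$.} First, $\varepsilon(b+J^2)$ is well defined: $aJ^2=0$ kills $b\in J^2$, and $SJ=0$ kills $a\in S$. It is a right $D$-map in the $D^{\mathrm{op}}$ sense, by associativity again.

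Now suppose $b\in J$ satisfies $ab=0$ for all $a\in L_2$. Every $D$-linear $f:V\to S$ has the form $\Phi(a+S)$, so $f(b+J^2)=0$ for every $f\in V^*$.

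Since $D$ is semisimple, $b+J^2$ generates a direct summand of $V_D$. If it is nonzero, some simple summand $N$ occurs with nonzero projection. Lemma~\ref{structure} says that $S_R$ contains every simple type, so there is an embedding $N\hookrightarrow S$. Composing the projection onto $N$ with this embedding gives some $f\in V^*$ with $f(b+J^2)\neq0$. This contradiction shows $b\in J^2$, so $\varepsilon$ is injective.

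Thus the only nontrivial input is the extension step in the surjectivity of $\Phi$, which relies on right self-injectivity, together with the fact that $S$ contains all simple types.
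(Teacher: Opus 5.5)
Your proof is correct, and for $\Phi$ it is the paper's argument: well-definedness, kernel $\la(J)/S=0$, and surjectivity by right self-injectivity. For $\varepsilon$ the paper simply cites the double-annihilator identity $\ra(L_2)=\ra\la(J^2)=J^2$ from Lemma~\ref{structure}; your separation argument, using surjectivity of $\Phi$ and the fact that $S$ contains every simple type, is essentially the proof of that identity specialized to $J^2$.

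There are two cosmetic points. First, $aJ^2=0$ gives $aJ\subseteq\la(J)$, not $\ra(J)$; this is harmless, since both equal $S$. Second, the $D$-$D$-bimodule structure on $L_2/S$ also requires $JL_2\subseteq S$, which follows from $(ja)J=j(aJ)\subseteq JS=0$.
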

\begin{proof}
If $aJ^2=0$, then $aJ\subseteq S$.
Also $JL_2\subseteq S$, because $j(aJ)=0$ for $j\in J$.
Thus $L_2/S$ is a $D$-$D$-bimodule, and the displayed actions
make $\Phi$ a bimodule homomorphism with kernel
$\la(J)/S=0$.
Any right $D$-map $V\to S$ gives a map $J\to R$ and extends,
by right self-injectivity, to left multiplication on $R$.
Its multiplier belongs to $L_2$, proving surjectivity.
Finally the kernel of the opposite map is
$\ra(L_2)/J^2=0$, by the double-annihilator identity.
\end{proof}

The first dual is therefore fully represented by multiplication.
The opposite extension is a separate requirement, not another
consequence of right self-injectivity.

\begin{proposition}\label{opposite-second-layer-test}
Under \eqref{F}, the following are equivalent:
\begin{enumerate}
\item $R$ is QF;
\item every left $R$-linear map $L_2\to S$ killing $S$
extends to the left regular module;
\item the extension property in \textup{(ii)} holds for maps
with simple image.
\end{enumerate}
If $R$ is not QF, the failure already occurs on a countably
generated left ideal $L\subseteq L_2$ containing $S$.
The offending map on $L$ extends from every finitely generated
left subideal of $L$, with explicit multipliers, but not from $L$.
\end{proposition}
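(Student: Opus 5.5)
The plan is to prove (i)$\Rightarrow$(ii)$\Rightarrow$(iii), and then show that failure of QF produces a map as in (iii) that does not extend.

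For (i)$\Rightarrow$(ii): a QF ring is left self-injective. Hence every left map $L_2\to S\subseteq R$ extends to ${}_RR$ by Baer's criterion, and (ii)$\Rightarrow$(iii) is trivial.

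The substance is $\neg$(i)$\Rightarrow\neg$(iii). Assume $R$ is not QF. As in the proof of Theorem~\ref{two-layer-witness}, $V=J/J^2$ has infinite length on the right. Choose a primitive idempotent $e$ and a simple $T$ occurring in $eV$ with infinite multiplicity, together with a surjection $q:eV\twoheadrightarrow T^{(\N)}$ whose coordinates $q_n:eV\to T$ are nonzero. Fix an embedding $\tau:T\hookrightarrow S$.

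By Proposition~\ref{second-layer-duality}, each $\tau q_n$, extended by zero on a complement of $eV$ in $V$, equals $\Phi(a_n+S)$ for some $a_n\in L_2$. The $a_n+S$ are $D$-independent in $L_2/S$, since their images under $\Phi$ are. Put
\[
L=S+\sum_n Ra_n,
\]
which is countably generated. Define a left $D$-map $g:L/S\to Q'$ onto a simple left module by sending each $a_n+S$ to the same nonzero element in a common simple left type. This is possible because the $a_n$ span left copies of the same simple module: $\Phi$ is a bimodule isomorphism, and the $\tau q_n$ are all isomorphic as left $D$-elements of $V^*$ after suitable choices. Embed $Q'$ in ${}_RS$, which contains every simple left type by Lemma~\ref{structure}. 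This gives a left map $\phi:L\to S$ with simple image killing $S$.

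On a finitely generated subideal $S+\sum_{n\le m}Ra_n$, the independence of finitely many $a_n+S$ lets one extend: choose $b\in R$ with $a_nb=\phi(a_n)$ for $n\le m$. Such a $b$ comes from the surjectivity of $\varepsilon$-type duality on finite pieces, namely from right self-injectivity realizing finitely many coordinates, as in the finite-support argument of Proposition~\ref{socle-ext-image}. These are the explicit multipliers. The map $\phi$ extends further to $L_2$ by semisimplicity of $L_2/S$ over $D$, giving (iii) on $L_2$ once nonextendability is shown.

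The main obstacle is nonextendability. Suppose $\phi(x)=xb$ for some $b\in R$. Then $a_nb\ne0$ for all $n$, so $b+J^2$ pairs nontrivially with every $a_n$. Via $\varepsilon$, with $\Phi(a_n+S)=\tau q_n$, the element $\bar b\in V$ would satisfy $\tau q_n(e\bar b)\ne0$ for infinitely many $n$. This contradicts $q(e\bar b)\in T^{(\N)}$ having finite support. The delicate point is matching left and right actions so that $a_nb$ really equals $\Phi(a_n)(\bar b)$ computed on $e\bar b$. I expect to handle it by inserting $e$ on the correct side, using $a_n=a_ne$ modulo $S$.
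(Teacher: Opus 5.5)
Your construction is correct in outline and takes a genuinely different route from the paper's, but the finite-stage extension step is justified with the wrong tool.

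\textbf{Comparison with the paper.} The paper adjoins an extra generator $a_\infty$, representing the sum functional $f_\infty=\sum_n\tau q_n$ on $J$. Its $\phi$ kills $S$ and every $a_n$ and sends $a_\infty$ to $s_0=\tau(t)$. It extends $\phi|_{L_N}$ by right multiplication with $j_m$, $m>N$, where $q(j_m)$ is $t$ in coordinate $m$ and $0$ elsewhere. The contradiction is that $a_nb=0$ for all $n$ forces $q(b+J^2)=0$, hence $a_\infty b=0$. You instead give $\phi$ the same nonzero value on every $a_n$. An extension $b$ would then give $q(e(b+J^2))$ infinitely many nonzero coordinates, contradicting membership in $T^{(\N)}$ directly. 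Your version needs neither $f_\infty$ (with its pointwise-finiteness check) nor the extra generator. The paper's version produces $a_\infty$ and the biorthogonal system \eqref{biorthogonal-multipliers}, which it reuses in \eqref{left-compactness-system}, Remark~\ref{second-layer-continuity} and Theorem~\ref{action-criterion}.

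\textbf{The finite-stage extension.} You need $b$ with $Sb=0$, hence $b\in J$, and $a_nb=y$ for $n\le m$, where $y$ is the common value $\phi(a_n)$. Since $a_nJ=\tau q_n(eV)=\tau(T)$, this is possible only if $y\in\tau(T)$. An arbitrary embedding $Q'\hookrightarrow{}_RS$ does not visibly guarantee this. It does hold, but only via the Morita double-annihilator property of $S$, which you never invoke. Your cited mechanism also points the wrong way: right self-injectivity, as in Proposition~\ref{socle-ext-image}, produces left multipliers such as the $a_n$ themselves, never a right multiplier $b$.

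\textbf{The fix.} Fix $0\ne t\in T$ and take the common value to be $s_0=\tau(t)$.
\begin{itemize}
\item This is well defined on $L/S=\bigoplus_nD(a_n+S)$, because $\{d\in D:d(a_n+S)=0\}=\{d:d\tau(T)=0\}$ kills $s_0$. Your independence claim is right: test on elements whose $q$-image is concentrated in one coordinate.
\item The image $Rs_0$ is simple, because $s_0R=\tau(T)$ is a simple right ideal and $R_R$ is injective: any $rs_0\ne0$ satisfies $c\,rs_0=s_0$ for some $c$.
\item Any $j\in J$ with $q(ej+eJ^2)$ equal to $t$ in coordinates $1,\dots,m$ and $0$ elsewhere is then an explicit multiplier on $L_m=S+\sum_{n\le m}Ra_n$. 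Such $j$ exists because $q$ is onto. Every finitely generated left subideal of $L$ lies in some $L_m$.
\item Defining $\phi$ as the union of these compatible right multiplications even makes the independence argument unnecessary.
\end{itemize}

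\textbf{The ``delicate point''.} This is not actually delicate. From $Sb=\phi(S)=0$ and $\ra(S)=J$ you get $b\in J$, a step you omitted. Then $a_nb=\Phi(a_n+S)(b+J^2)=\tau q_n(e(b+J^2))$ holds by the very choice of $a_n$. No manipulation with $a_n\equiv a_ne$ modulo $S$ is needed.
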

\begin{proof}
Left self-injectivity gives (i)$\Rightarrow$(ii), and
(ii)$\Rightarrow$(iii) is immediate.
Suppose $R$ is not QF and take a quotient
$q:V\twoheadrightarrow T^{(\N)}$, as in
Theorem~\ref{homogeneous}. Fix an embedding $\tau:T\hookrightarrow S$
and $0\ne t\in T$; put $s_0=\tau(t)$.
Let $f_n=\tau q_n$ be the coordinate maps, regarded as maps on $J$.
They are pointwise finite. Therefore
\[
f_\infty:J\longrightarrow S,\qquad
f_\infty(j)=\sum_n f_n(j)
\]
is well-defined and right $R$-linear.
Right self-injectivity represents $f_n$ and $f_\infty$ by
left multiplication with $a_n,a_\infty\in L_2$.
Choose $j_m\in J$ whose image under $q$ has $t$ in coordinate
$m$ and zero elsewhere. Then
\begin{equation}\label{biorthogonal-multipliers}
a_nj_m=\delta_{nm}s_0,\qquad a_\infty j_m=s_0,
\qquad a_\infty j=\sum_n a_nj\quad(j\in J).
\end{equation}
The last sum is finite for each fixed $j$; it is an identity
of maps on $J$.

Set $L=S+Ra_\infty+\sum_nRa_n$ and define
\[
\phi\left(s+\sum_n r_na_n+r_\infty a_\infty\right)=r_\infty s_0,
\]
where the sum is finite. This is well-defined: multiply any
zero relation on the right by $j_m$, choosing $m$ outside
the finite support, to obtain $r_\infty s_0=0$.
It is left linear and has simple image $Rs_0$.
Indeed, if $u=rs_0\ne0$, left multiplication by $r$ restricts
to a nonzero map from the simple right ideal $s_0R$ onto
$uR$, hence to an isomorphism. Its inverse extends to an
endomorphism of $R_R$ by right self-injectivity. That
endomorphism is left multiplication by some $c\in R$,
so $cu=s_0$. Every nonzero left submodule of $Rs_0$ therefore
contains $s_0$, and $Rs_0$ is simple.
The ideal $L$ is countably generated because ${}_RS$ is finite.

For $L_N=S+Ra_\infty+\sum_{n\leq N}Ra_n$, right multiplication
by any $j_m$ with $m>N$ extends $\phi|_{L_N}$ to $R$.
Every finitely generated left subideal of $L$ lies in some $L_N$.
If $\phi$ extended from $L$ by right multiplication with $b$,
then $Sb=0$, so $b\in J$. The equations $a_nb=0$ for all $n$
would give $q(b+J^2)=0$ and hence $a_\infty b=0$, contradicting
$\phi(a_\infty)=s_0$.
Finally $\phi$ factors through the left $D$-submodule $L/S$
of $L_2/S$. Semisimplicity extends it to $L_2/S$ with values
in $Rs_0$. This produces a simple-image map in (iii) which
cannot extend to $R$, and proves the equivalence.
\end{proof}

Choose left generators $s_1,\ldots,s_r$ of $S$.
The proof exhibits the actual one-unknown system
\begin{equation}\label{left-compactness-system}
s_i b=0\ (1\leq i\leq r),\qquad
a_n b=0\ (n\geq1),\qquad a_\infty b=s_0.
\end{equation}
Every finite subsystem is solved by some $j_m\in J$, whereas
the whole system has no solution in $R$.
These are equations with coefficients on the left of the
unknown, so pure-injectivity of the \emph{right} regular module
does not solve them.

\begin{remark}\label{second-layer-continuity}
Give $V^*=\Hom_D(V,S)$ its pointwise topology with $S$ discrete.
Its continuous left $D$-linear maps to $S$ are exactly the
evaluations at elements of $V$. Indeed, continuity gives a
factorization through the dual of a finite-length submodule
of $V$. By Lemma~\ref{structure}, $S$ induces finite Morita
duality, so evaluation identifies that finite bidual with
the submodule itself.
For any extension of $\phi$ to $L_2/S$, the associated map on
$V^*$ is discontinuous. The sequence
\[
\Phi\left(a_\infty-\sum_{n\leq N}a_n+S\right)
\]
converges pointwise to zero by \eqref{biorthogonal-multipliers},
but its image is constantly $s_0$.
This topology is not the finite topology on $\End_R(R)$.
Automatic continuity here would already supply the missing
opposite-side extension; it has not been deduced from \eqref{F}.
\end{remark}

\subsection{Compatible radical actions}
\begin{theorem}\label{automorphism-criterion}\label{action-criterion}
Assume \eqref{F}. If the left and right actions of $J$ on $L_2$
give the same set of additive endomorphisms, that is,
\begin{equation}\label{matching-actions}
\{a\mapsto ja:j\in J\}=\{a\mapsto aj:j\in J\}
\quad\text{in }\End_{\mathbb Z}(L_2),
\end{equation}
then $R$ is QF. In particular, the conclusion holds if there
is a ring automorphism $\sigma$ such that
\begin{equation}\label{automorphism-identity}
aj=\sigma(j)a\qquad(a\in L_2,\ j\in J).
\end{equation}
The latter condition holds if every element
of $L_2$ commutes with every element of $J$.
\end{theorem}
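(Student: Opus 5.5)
Argue by contradiction using the opposite extension obstruction of Proposition~\ref{opposite-second-layer-test}. Suppose \eqref{matching-actions} holds but $R$ is not QF. Take the quotient $q:V\twoheadrightarrow T^{(\N)}$, the embedding $\tau$, the element $s_0$, the multipliers $a_n,a_\infty\in L_2$ and the elements $j_m\in J$ satisfying \eqref{biorthogonal-multipliers}. The goal is to find a single $b\in J$ that solves the whole system \eqref{left-compactness-system}. That proof shows no such $b$ exists, so this gives the contradiction.

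The key step is to turn the given left-side functional into a right multiplication. Consider the right multiplication $\mu_\infty:L_2\to L_2$, $a\mapsto aj$, for a suitable $j$; it is a left $R$-linear additive endomorphism. By \eqref{matching-actions}, each right multiplication by $j\in J$ on $L_2$ equals a left multiplication by some $j'\in J$, and conversely. We want an element $b\in J$ with $a_nb=0$ for all $n$ and $a_\infty b=s_0$. The natural first attempt uses the converse inclusion. The map $\phi$ of Proposition~\ref{opposite-second-layer-test} is left $R$-linear, and left linearity does not directly relate to left multiplications. So instead we work with the right $R$-structure.

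Consider the right $D$-map $V\to S$ given by $v\mapsto a_\infty v-\sum_n a_nv$. This is zero, since $a_\infty j=\sum_n a_nj$ on $J$. Transport this identity to the left side. For each $j\in J$, choose $j'\in J$ with $j'a=aj$ for all $a\in L_2$. Applying this to $j=j_m$ gives $j_m'a_n=\delta_{nm}s_0$ and $j_m'a_\infty=s_0$. Hence the left multipliers $j_m'$ detect the coordinates of the $a_n$ from the left. Now use the reverse inclusion together with left-sided finiteness. The left action of $J$ on $L_2$ yields right multiplications, so every left-multiplication operator on $L_2$ annihilates $S$ and factors through $L_2/S$. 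Via $\Phi$ in Proposition~\ref{second-layer-duality}, $L_2/S\cong V^*$, and left multiplication by $j'$ corresponds to an endomorphism of $V^*$ induced by right multiplication by $j$ on $V$, i.e.\ precomposition. Left multiplication by $j'$ on $L_2/S$ has image in $S$, hence it defines a left $D$-functional on $V^*$. By the matching hypothesis this functional is evaluation at $j+J^2\in V$; it is continuous for the pointwise topology of Remark~\ref{second-layer-continuity}.

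The main obstacle is producing the element $b$. For this, take the functional $\psi$ on $L_2/S$ obtained by extending $\phi$ as in the proof of Proposition~\ref{opposite-second-layer-test}. The aim is to show that it arises from some left multiplication by $j'\in J$. Then \eqref{matching-actions} converts that left multiplication into right multiplication by $b\in J$, solving \eqref{left-compactness-system}. Realizing $\psi$ as a left multiplication needs a left-injectivity-type input. The cleanest route is to note that under \eqref{matching-actions} the left $D$-module $L_2/S$ and the right structure are interchanged. The right dual is fully represented by Proposition~\ref{second-layer-duality}, so one tries to show the left functionals are represented too. If this proves hard, the fallback is a counting argument. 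The sets of left and right multiplication operators coincide, and the right operators form a quotient of $V$ of infinite length with coordinates detected by the $a_n$. The left operators factor through $\Hom_{D}({}_D(L_2/S),S)$. Comparing the images of $j_m$ and of a candidate $b$ realizing $\phi$ should force pointwise finiteness to fail, giving the contradiction.

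Finally, the two sufficient conditions are routine. If $\sigma$ is an automorphism with $aj=\sigma(j)a$, then $\sigma(J)=J$ because $J$ is characteristic. Hence right multiplication by $j$ equals left multiplication by $\sigma(j)\in J$, and left multiplication by $j$ equals right multiplication by $\sigma^{-1}(j)$. This gives \eqref{matching-actions}. The commuting case is $\sigma=\id$.
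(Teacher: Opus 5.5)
Your reduction is sound. It suffices to produce $c\in J$ with $a_nc=0$ for all $n$ and $a_\infty c=s_0$, i.e.\ a solution of \eqref{left-compactness-system}. Your treatment of the automorphism and commuting cases is also correct. The gap is that the proposal never produces that element, and producing it is the whole content of the theorem. You propose to represent the left-linear map $\phi$, or its extension to $L_2/S$, by a left multiplication, and you note that this ``needs a left-injectivity-type input.'' That input is circular: extendability of such left maps is equivalent to QF by Proposition~\ref{opposite-second-layer-test}. The fallback ``counting'' comparison starts from a candidate $b$ realizing $\phi$, which is exactly what is in question, so it does not yield a contradiction.

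The missing idea is to mirror $\phi$ to the right side, where right self-injectivity applies directly. On the right ideal $L'=S+a_\infty R+\sum_na_nR$, define $\theta\bigl(s+\sum_na_nr_n+a_\infty r_\infty\bigr)=s_0r_\infty$. Your left multipliers $j_m'$, with $j_m'a=aj_m$ on $L_2$, give exactly the well-definedness of $\theta$. Multiply a finite zero relation on the left by $j_m'$, with $m$ outside its support. Then $j_m's=0$ since $JS=0$, $j_m'a_nr_n=a_nj_mr_n=0$ for $n\neq m$, and $j_m'a_\infty r_\infty=s_0r_\infty$, so $s_0r_\infty=0$. Right self-injectivity extends the right-linear map $\theta$ to left multiplication by some $b\in R$. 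Since $bS=0$, we get $b\in\la(S)=J$. Now use the other inclusion in \eqref{matching-actions}: it gives $c\in J$ with $ba=ac$ on $L_2$. Hence $a_nc=ba_n=0$ for every $n$ and $a_\infty c=ba_\infty=s_0$, which is the solution you were looking for. The contradiction is then immediate. Because $\tau$ is monic, $a_nc=0$ for all $n$ gives $q(c+J^2)=0$, and so $a_\infty c=\sum_na_nc=0$ by \eqref{biorthogonal-multipliers}. A genuine counting proof does exist: Proposition~\ref{central-layer-criterion} gets QF from the single inclusion, which is equivalent to $[R,J]\subseteq J^2$. That argument uses centrality of $V$, a bound on the left length of $L_2/S$ via tensor powers of $V$, and the Erd\H{o}s--Kaplansky theorem, none of which appears in your sketch.
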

\begin{proof}
If $R$ were not QF, take the multipliers in
\eqref{biorthogonal-multipliers}. On the right ideal
$L'=S+a_\infty R+\sum_n a_nR$ define
\[
\psi\left(s+\sum_n a_nr_n+a_\infty r_\infty\right)=s_0r_\infty.
\]
To check well-definedness, choose $c_m\in J$ with
$c_ma=aj_m$ for all $a\in L_2$, using \eqref{matching-actions}.
Multiply a zero relation on the left by $c_m$, with $m$
outside its finite support. Then $c_mS=0$,
$c_ma_n=a_nj_m$, and $c_ma_\infty=a_\infty j_m$.
Thus $s_0r_\infty=0$ as required.
Right self-injectivity extends $\psi$ to left multiplication
by $b\in R$. Since $bS=0$, we have $b\in J$.
Again by \eqref{matching-actions}, choose $c\in J$ such that
$ba=ac$ for all $a\in L_2$.
For every $n$, $0=ba_n=a_nc$, so $q(c+J^2)=0$.
Hence $ba_\infty=a_\infty c=0$, contradicting
$\psi(a_\infty)=s_0$.
Finally, \eqref{automorphism-identity} implies
\eqref{matching-actions} because $\sigma(J)=J$.
\end{proof}

The containment of right multiplications by $J$ in the left
multiplications by $J$ on $L_2$ is already equivalent to
$[R,J]\subseteq J^2$. Indeed, apply the identity $aj=ba$
also to $ar\in L_2$. Comparing $(ar)j=bar$ with
$(aj)r=bar$ gives $a(rj-jr)=0$ for all $a\in L_2$, whence
$rj-jr\in\ra(L_2)=J^2$. Conversely, this commutator condition
makes right multiplication by $j$ on $L_2$ right $R$-linear.
Right self-injectivity extends it to left multiplication by
$b\in R$, and $bS=0$ implies $b\in J$.
Thus \eqref{matching-actions} includes a strong restriction
on the first radical layer. In fact the following dimension
argument gives QF under that weaker restriction. The preceding
proof records a direct transport of the particular Baer
obstruction, rather than a new class beyond this dimension test.

\begin{proposition}\label{central-layer-criterion}
Assume \eqref{F}. If $[R,J]\subseteq J^2$, then $R$ is QF.
\end{proposition}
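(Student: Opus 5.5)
The plan is a cardinality (dimension) comparison between the second radical layer $V=J/J^2$ and its dual. I would argue by contradiction, assuming $R$ is not QF. I would then use the bimodule isomorphism $\Phi:L_2/S\cong V^*=\Hom_D(V_D,S_D)$ of Proposition~\ref{second-layer-duality} to produce a subquotient of $R$ that is too large for the size of $R$ permitted by $V$.

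\emph{Step 1: reading the hypothesis on $V$.} The condition $[R,J]\subseteq J^2$ says that the induced left and right actions of $D$ on $V$ coincide: $dv=vd$. Applying this twice, for $d,d'\in D$,
\[
d(d'v)=(d'v)d=v\,d'd,\qquad d(d'v)=(dd')v=v\,dd',
\]
so $V[D,D]=0$. Since $D$ is semisimple, $V$ is therefore a module over the commutative quotient of $D$, namely the product of its commutative simple components, which are fields. Hence
\[
V=\bigoplus_i e_iVe_i,
\]
where the $e_i$ are the central idempotents of these field components $F_i=e_iDe_i$. Each $e_iVe_i$ is an $F_i$-vector space on which the two actions agree. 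By Theorem~\ref{homogeneous}, or the radical-layer argument in Lemma~\ref{countable}, non-QF forces $V$ to have infinite length. So for some primitive $e$, with field $F=eDe$, the dimension $\kappa=\dim_F eVe$ is infinite.

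\emph{Step 2: an upper bound for the corner ring.} The layers $J^n/J^{n+1}$ are images of $V\otimes_D\cdots\otimes_DV$. Since $V$ is concentrated on the diagonal corners $e_iVe_i$, these tensor powers stay diagonal. Consequently $eJ^ne/eJ^{n+1}e$ is spanned over $F$ by images of $(eVe)^{\otimes n}$, so its $F$-dimension is at most $\kappa$. Because $J^d=0$, this gives $\dim_F eRe\le\kappa$, and hence every $F$-subquotient of $eRe$, in particular $e(L_2/S)e$, has dimension at most $\kappa$.

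\emph{Step 3: a lower bound from the dual, and the main obstacle.} Through $\Phi$ with the displayed bimodule actions, the corner $e(L_2/S)e$ should correspond to $\Hom_F(eVe,eSe)$. Provided $eSe\neq0$, this is at least the full dual of a $\kappa$-dimensional space, of dimension at least $2^\kappa>\kappa$. That contradicts Step 2 and proves the proposition.

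The main obstacle is exactly the idempotent matching in this step. The simple right ideal of $S$ of type $eR/eJ$ lies in $e'S$ for the Nakayama-permuted idempotent $e'$, which need not equal $e$. The nonzero part of the dual would then sit in $e'(L_2/S)e$, whose ambient corner is an $F'$-$F$ bimodule, and the two fields need not match for a dimension count.

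I would try first to show that the Morita self-duality on $S$ from Lemma~\ref{structure} carries the commutative component containing $e$ to itself. I would combine this with the fact that commutation with $J$ modulo $J^2$ transports, via the double-annihilator identities, to $JL_2\subseteq S$ compatibly. This would let me replace $e$ by $e'$ throughout and make the computation take place in a single corner $e'Re'$ over one field.

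If that fails, the fallback is a pure cardinality count over the prime field. One uses $|V^*|\ge |F|^{\kappa}$ against $|R|\le|F|\cdot\kappa$. This fallback needs care when $|F|^{\kappa}=|F|$, so I expect the idempotent-matching route to be the real content of the argument.
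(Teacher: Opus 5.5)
Your Steps~1 and~2 are correct and essentially reproduce the paper's upper bound. The argument is not complete, however. The lower bound in Step~3 needs $eSe\neq0$, and you leave this open. The Morita-duality route is only sketched. The cardinality fallback cannot work in general, since $|F|^\kappa=|F|$ is possible, as you note yourself. So the contradiction is never actually reached.

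The missing idea is that no idempotent matching is needed. The paper never passes to a corner. It bounds the \emph{total} left $D$-length of $L_2/S$ by $\kappa=\ell_D(V)$. For this it uses the finite filtration $((L_2\cap J^n)+S)/S$. Each factor is a subquotient of $J^n/J^{n+1}$, which is an image of $V^{\otimes_Dn}$, and this has left length at most $\kappa$ because $V$ is central. The only other piece is a finite one coming from $D$.

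For the lower bound, the paper takes a simple type $T$ of multiplicity $\kappa$ in $V_D$. Then $\Phi$ makes $K^\kappa$, with $K=\Hom_D(T,S)$, a direct summand of $L_2/S$. Here $K\neq0$ automatically, because $S_D$ contains every simple type (Lemma~\ref{structure}). Morita equivalence for the block of $K$, together with Erd\H{o}s--Kaplansky, gives $\ell_D(K^\kappa)>\kappa$, whichever block $K$ lies in. A Nakayama shift of blocks is therefore harmless in the global count.

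Alternatively, your corner version can be repaired in a few lines, and the mismatch you fear does not occur under the hypothesis. Take any idempotent $f$ and any $x\in fJ^n(1-f)$. Then $x=[f,x]\in[R,J^n]\subseteq J^{n+1}$ by the Leibniz rule. Since $J^d=0$, this gives $fJ(1-f)=(1-f)Jf=0$.

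Apply this to your lifted $e$, whose image is the identity of a field block and hence central in $D$. Every element of $eR(1-e)$ then lies in $eJ(1-e)=0$, and similarly for $(1-e)Re$, so $e$ is central in $R$. Consequently $eS=eSe=\Soc(eR)\neq0$. Step~3 then goes through, since $\Hom_F(eVe,eSe)$ contains a copy of $F^\kappa$ as a left $F$-space.
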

\begin{proof}
The $D$-$D$-bimodule $V=J/J^2$ is central in the strong
sense $dv=vd$ for every $d\in D$, $v\in V$.
In this proof, the length of a semisimple module means the
cardinal number of its simple summands.
The ideal generated by commutators of $D$ annihilates $V$.
Consequently $V$ is supported only on the simple blocks of
$D$ which are fields; central idempotents exclude cross terms.
If $R$ were not QF, the right semisimple length
$\kappa=\ell_D(V)$ would be infinite. Each tensor power
$V^{\otimes_D n}$ then has left length at most $\kappa$,
and multiplication maps it onto $J^n/J^{n+1}$.
The left semisimple module $L_2/S$ has the finite filtration
$((L_2\cap J^n)+S)/S$; each factor is a subquotient of
$J^n/J^{n+1}$ (a subquotient of $D$ when $n=0$).
Thus its left length is at most $\kappa$.

On the other hand, some simple right type $T$ has multiplicity
$\kappa$ in $V$. By Proposition~\ref{second-layer-duality},
$L_2/S$ has a direct summand $K^\kappa$, where
$K=\Hom_D(T,S)$ is a nonzero simple left $D$-module.
Morita equivalence for its simple block and the
Erd\H{o}s--Kaplansky theorem give
$\ell_D(K^\kappa)>\kappa$, a contradiction.
Here the dimension theorem is used in its classical form
$\dim_F F^\kappa=|F^\kappa|>\kappa$ for a division ring $F$
and an infinite cardinal $\kappa$; see
\cite[Section 6]{BergmanEK}.
\end{proof}

\begin{example}\label{quantum-example}
For a field $k$ and $0\ne\lambda\in k$, consider the familiar algebra
\[
R=k\langle x,y\rangle/(x^2,y^2,yx-\lambda xy).
\]
It has basis $1,x,y,xy$, $S=kxy$ and $L_2=J=(x,y)$.
The coefficient of $xy$ in a product is a nondegenerate
associative bilinear form, so $R$ is Frobenius.
The automorphism $\sigma(x)=\lambda x$,
$\sigma(y)=\lambda^{-1}y$ satisfies
\eqref{automorphism-identity}, as is checked on the basis of $J$.
When $\lambda\ne1$, $L_2$ and $J$ do not commute.
Thus the twisted identity allows noncommuting radical actions.
The condition is illustrated here by a familiar Frobenius algebra.
The condition is not necessary for a QF ring: for
$R=M_2(k[t]/(t^2))$ one has $L_2=R$. Evaluating an equality
$a j=c a$ at $a=1$ forces $c=j$, so \eqref{matching-actions}
would make every element of $J$ central, which is false.
Thus it cannot be imposed on arbitrary rings satisfying \eqref{F}.
\end{example}

\subsection{A product map and Chase's correction term}
A $p$-functor is an additive subfunctor of the underlying
abelian-group functor on right modules which commutes with
products. Such functors also commute with direct sums.
Chase's lemma for a map $g:\prod_m X_m\to\bigoplus_\lambda Y_\lambda$
and a descending chain $P_n$ allows a finite exceptional
subsum together with the correction
$\bigcap_nP_n(\bigoplus_\lambda Y_\lambda)$; see
\cite[Lemma 11 and its proof]{HZ}.
For the maps arising from the preceding construction, that
correction already contains the entire image after restriction.

\begin{proposition}\label{socle-chase-boundary}
Let $S$ be a two-sided ideal of any ring $R$, of finite length
as a left module. Let $P_1\supseteq P_2\supseteq\cdots$ be
$p$-functors on right modules. If
$g:X\to R^{(\Lambda)}$ has image in $S^{(\Lambda)}$, then
for some $n_0$,
\[
g(P_{n_0}X)\subseteq\bigcap_{n\geq1}P_n(R^{(\Lambda)}).
\]
\end{proposition}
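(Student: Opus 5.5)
The plan is to reduce everything to the single module $R_R$, where the chain $P_n$ becomes a descending chain of left ideals. The finite left length of $S$ then forces that chain to stabilize once it is intersected with $S$.

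First, fix a $p$-functor $P$. Because $P$ is a subfunctor, every homomorphism $u:Y\to Y'$ satisfies $u(P(Y))\subseteq P(Y')$. Apply this to the endomorphisms of $R_R$, which are the left multiplications by elements of $R$. This shows that $P(R)$ is a left ideal of $R$. Since $P$ commutes with direct sums, as recorded just before the statement, we have
\[
P\bigl(R^{(\Lambda)}\bigr)=P(R)^{(\Lambda)}
\]
as subgroups of $R^{(\Lambda)}$. This identification is taken coordinatewise, through the canonical inclusions and projections. Consequently
\[
P\bigl(R^{(\Lambda)}\bigr)\cap S^{(\Lambda)}=\bigl(P(R)\cap S\bigr)^{(\Lambda)}.
\]

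Second, apply this to the chain. The sets $P_n(R)\cap S$ form a descending chain of left $R$-submodules of $S$. Since ${}_RS$ has finite length, the chain stabilizes: there is an $n_0$ such that $P_n(R)\cap S=P_{n_0}(R)\cap S=:S_0$ for all $n\geq n_0$. For $n<n_0$ we have $S_0\subseteq P_{n_0}(R)\subseteq P_n(R)$ because the chain descends. Hence $S_0\subseteq\bigcap_nP_n(R)$, and so
\[
S_0^{(\Lambda)}\subseteq\bigcap_nP_n(R)^{(\Lambda)}=\bigcap_nP_n\bigl(R^{(\Lambda)}\bigr).
\]

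Third, use the subfunctor property for $g$ itself. It gives $g(P_{n_0}X)\subseteq P_{n_0}(R^{(\Lambda)})$, and by hypothesis the image of $g$ lies in $S^{(\Lambda)}$. Combining the two steps,
\[
g(P_{n_0}X)\subseteq P_{n_0}(R)^{(\Lambda)}\cap S^{(\Lambda)}=S_0^{(\Lambda)},
\]
which is contained in $\bigcap_nP_n(R^{(\Lambda)})$, as required.

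The only point needing care is the coordinatewise identification $P(R^{(\Lambda)})=P(R)^{(\Lambda)}$. I would check it directly. Let $\iota_\lambda$ and $\pi_\lambda$ be the coordinate inclusions and projections. The subfunctor property gives $\pi_\lambda(P(R^{(\Lambda)}))\subseteq P(R)$ and $\iota_\lambda(P(R))\subseteq P(R^{(\Lambda)})$. Additivity of $P(R^{(\Lambda)})$ then handles finite sums in the reverse inclusion. So in fact only the subfunctor and additivity properties are needed here, not the product condition. No hypothesis on $\Lambda$ or on $X$ enters.
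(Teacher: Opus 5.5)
Your proof is correct and follows the paper's own argument: the left ideals $P_n(R)\cap S$ stabilize because ${}_RS$ has finite length, and naturality of $P_{n_0}$ together with $P_n(R^{(\Lambda)})=P_n(R)^{(\Lambda)}$ gives the inclusion. Your added remark is also right: that coordinatewise identification needs only the subfunctor property and additivity, not commutation with products.
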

\begin{proof}
Each $P_nR$ is invariant under left multiplication, an
endomorphism of $R_R$. Hence $H_n=P_nR\cap S$ is a left
submodule of $S$. Choose $n_0$ with $H_{n_0}=\bigcap_nH_n$.
Naturality and commutation with sums give
\[
g(P_{n_0}X)\subseteq P_{n_0}(R^{(\Lambda)})\cap S^{(\Lambda)}
=H_{n_0}^{(\Lambda)}
\subseteq\bigcap_nP_n(R^{(\Lambda)}).
\]
\end{proof}

In the non-QF situation of Theorem~\ref{homogeneous}, extend
the map on $eJ$ to $f:J\to T^{(\N)}\subseteq S^{(\N)}$
by $f(j)=f(ej)$. It remains surjective.
The coefficient $J_{R/S}$ is injective by
Theorem~\ref{descent}. Thus the sum map
$\sigma:J^{(\N)}\to J$ extends to $h:J^{\N}\to J$ over $R/S$,
and hence over $R$. Consequently
\[
g=fh:J^{\N}\longrightarrow E
\]
is a product-to-sum map. Its restriction to each input
coordinate is $f$, since $h$ restricts to the identity there.
Proposition~\ref{socle-chase-boundary} applies to every descending
$p$-functor chain: no finite exceptional output coordinates
are needed, because the correction term contains the whole
restricted image. A use of Chase's lemma on this map alone
therefore does not force finite support.

\subsection{The remaining finite-support problem}
\begin{problem}\label{remaining-gap}\label{cyclic-problem}
Under \eqref{F}, prove \eqref{remaining}; equivalently,
exclude the cyclic essential extension in
\eqref{homogeneous-witness}.
\end{problem}

Every individual element of $xR\cap S^{(\N)}$ already has
finite support. What is missing is one finite set of
coordinates valid for the whole socle of $xR$.
Neither the all-degree nonvanishing in
Theorem~\ref{persistence} nor the descent in
Theorem~\ref{descent} provides this uniform bound.
Theorem~\ref{action-criterion} supplies a sufficient compatibility
condition. Example~\ref{quantum-example} shows that the condition
is not automatic under \eqref{F}, so it cannot serve as the
missing assertion in full generality.
If the uniform bound were established, Theorem~\ref{projectivity} would
construct finite projective-injective summands covering
$I$, and essentiality would then give $I=E$.

The finite topology on $\End_R(I)$ should also be kept
separate from the topology induced on $I$ by the product
of discrete copies of $R$. For $R\ne0$, choose $0\ne s\in S$.
The finite truncations of $(s,s,\ldots)$ lie in $E$, but
their product limit does not lie in $I$, since
$\Soc I=S^{(\N)}$. Thus $I$ is not complete for that induced
topology. This has no bearing on the completeness of its
endomorphism ring in the finite topology.

\begin{remark}\label{cancellation-boundary}
The cancellation theorem cannot be applied to the fixed hull
by replacing finite cogeneration with direct finiteness.
Over a semiprimary ring, a quasi-injective module $M$ is
directly finite exactly when its essential socle has finite
length. Indeed, if some simple type has infinite multiplicity,
the socle has an isomorphic proper summand. Its inclusion
extends to a monic endomorphism of $M$, which splits by
Lemma~\ref{relative-extension} and has proper image.
Conversely, finite socle length excludes a proper isomorphic
summand. There are only finitely many simple types.
In particular $I\cong I\oplus I$, and the homogeneous $H$
in Theorem~\ref{homogeneous} also satisfies $H\cong H\oplus H$,
by uniqueness of injective hulls of their socles.
Proposition~\ref{finite-blocks} further shows that deleting
any finite set of coordinates leaves $B$ unchanged.
These facts explain the limitation of finite cancellation;
they do not prove that $B$ is nonzero or exclude that possibility.
\end{remark}

\section{Products and injective hulls over a square-zero ring}\label{nonopen-section}
We now apply the countable hull construction to the same ring as in
Theorem~\ref{counterexample}. The coefficient module will be the
injective hull of its simple module, rather than the regular module.
The resulting hull quotient can be computed completely. Moreover,
the submodule used in the product counterexample is its intersection
with the original product inside a common ambient module.

Retain $R$, $J$, $L=R/J$ and $P=\prod_nE_n$ from
Section~\ref{counterexample-section}, and identify $J$ with
$V=k^{(\N)}$ using the basis $x_n$. Thus $R=k\oplus V$ with
\begin{equation}\label{square-ring}
(c,v)(b,w)=(cb,cw+bv).
\end{equation}
Write $\mathcal H=\End_k(V)$ and let $\mathcal F\subseteq\mathcal H$
be the two-sided ideal of finite-rank maps. Put
\[
C=\Hom_k(R,k),\qquad E_C=C^{(\N)},\qquad
I_C=\Hom_k(R,V).
\]
The right action on these coinduced modules is $(fr)(a)=f(ra)$.
Evaluation at $1$ and restriction to $V$ give
\begin{equation}\label{coinduced-action}
I_C=V\oplus\mathcal H,\qquad
(z,h)(c,v)=(cz+h(v),ch).
\end{equation}
Coordinate evaluation embeds $I_C$ in $C^{\N}$ and identifies
$E_C$ with its usual finite-coordinate submodule.

\begin{theorem}\label{calkin-hull}
With these inclusions, $C$ is a finitely cogenerated injective
module with socle $L$, and $I_C$ is the injective hull of $E_C$.
More explicitly,
\begin{equation}\label{calkin-hull-formula}
E_C=V\oplus\mathcal F\subseteq V\oplus\mathcal H=I_C,
\qquad I_C/E_C\cong\mathcal H/\mathcal F.
\end{equation}
The quotient is annihilated by $J$. Pullback of
$0\to E_C\to I_C\to I_C/E_C\to0$ gives an isomorphism
\begin{equation}\label{calkin-ext}
\mathcal H/\mathcal F\xrightarrow{\ \sim\ }
\Ext_R^1(L,E_C),\qquad h+\mathcal F\longmapsto\xi_h,
\end{equation}
where $\xi_h$ is also the pushout of $0\to J\to R\to L\to0$
along the map $v\mapsto(h(v),0)$ into $\Soc E_C$.
For $a\in\mathcal H$, the formula
\[
\widehat a(z,h)=(az,a\circ h)
\]
defines an endomorphism of $I_C$ preserving $E_C$, and
\begin{equation}\label{universal-baer}
(\widehat a|_{E_C})_*\xi_{\id_V}=\xi_a.
\end{equation}
Thus, under this specified action, $\Ext_R^1(L,E_C)$ is the
left regular module of $\mathcal H/\mathcal F$.
\end{theorem}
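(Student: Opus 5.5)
We work throughout in the coordinates \eqref{coinduced-action} and verify the items of the statement in order: injectivity and socle of $C$, the identification $E_C=V\oplus\mathcal F$, essentiality of $E_C$ in $I_C$, the Ext computation, and finally the action of $\widehat a$.

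\emph{Injectivity of $C$ and $I_C$.} Coinduction $\Hom_k(R,-)$ is right adjoint to restriction to $k$, and restriction is exact, so $C$ and $I_C=\Hom_k(R,V)$ are injective. Taking $V=k$ in \eqref{coinduced-action} gives $C=k\oplus V^*$ with action $(z,\phi)(c,v)=(cz+\phi(v),c\phi)$. Its socle, the annihilator of $J$, is $k\oplus0\cong L$. This socle is essential: if $\phi\ne0$, choose $v$ with $\phi(v)\ne0$; then $(z,\phi)v=(\phi(v),0)$ lies in the socle. Hence $C$ is finitely cogenerated.

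\emph{The embedding and $E_C=V\oplus\mathcal F$.} The $n$th coordinate of $(z,h)$ in $C^{\N}$ is $(z_n,\pi_n\circ h)$, where $\pi_n$ is the $n$th coordinate functional on $V$; so evaluation embeds $I_C$ in $C^{\N}$. An element lies in the finite-coordinate part exactly when $z$ has finite support and only finitely many $\pi_n\circ h$ are nonzero, that is, when $h$ has image in a finite-dimensional subspace. Thus $E_C=V\oplus\mathcal F$, and the quotient $I_C/E_C\cong\mathcal H/\mathcal F$ follows. Moreover $J$ maps $I_C$ into $V\oplus0\subseteq E_C$, so $J$ annihilates the quotient.

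\emph{Essentiality of $E_C$ in $I_C$.} The socle of $I_C$ is $V\oplus0\subseteq E_C$, and every nonzero $(z,h)$ with $h\ne0$ is sent by a suitable $v$ to the nonzero element $(h(v),0)$ of the socle. Hence $E_C$ is essential, and $I_C$ is its injective hull.

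\emph{The isomorphism \eqref{calkin-ext}.} This is the step I expect to need the most care, mainly in matching the two descriptions of $\xi_h$. Apply $\Hom_R(L,-)$ to $0\to E_C\to I_C\to I_C/E_C\to0$.
\begin{itemize}
\item Every map $L\to I_C$ lands in the socle, which lies in $E_C$, so $\Hom_R(L,E_C)\to\Hom_R(L,I_C)$ is bijective.
\item Since $I_C$ is injective, $\Ext^1_R(L,I_C)=0$.
\item Therefore the connecting map $\Hom_R(L,I_C/E_C)\to\Ext^1_R(L,E_C)$ is an isomorphism.
\item Because $J$ kills the quotient, $\Hom_R(L,\mathcal H/\mathcal F)=\mathcal H/\mathcal F$, and the connecting map is pullback along $k\cdot(h+\mathcal F)$.
\end{itemize}
To identify this pullback with a pushout, use the projective presentation $0\to J\to R\to L\to0$. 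Lift $1\mapsto(0,h)\in I_C$; then $x_n\mapsto(0,h)x_n=(h(x_n),0)$. So the restriction to $J$ is $v\mapsto(h(v),0)\in\Soc E_C$. The standard comparison between the connecting map from an injective-hull sequence and the Baer class from a presentation then gives that the pullback class equals the pushout along this restriction, as in the proofs of Proposition~\ref{graph} and Theorem~\ref{homogeneous}. As a consistency check, the class vanishes exactly when the map $J\to E_C$ extends to $R\to E_C$. Such an extension sends $1$ to some $(z,g)$ with $g\in\mathcal F$ and $g=h$, which happens exactly when $h\in\mathcal F$.

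\emph{The endomorphism $\widehat a$ and \eqref{universal-baer}.} Compute $\widehat a((z,h)(c,v))=(a(cz+h(v)),c\,a\circ h)=\widehat a(z,h)(c,v)$, so $\widehat a$ is $R$-linear. Since $\mathcal F$ is an ideal, $a\circ h\in\mathcal F$ whenever $h\in\mathcal F$, so $\widehat a$ preserves $E_C$. Pushing the pushout description of $\xi_{\id_V}$ forward along $\widehat a|_{E_C}$ composes the defining map $v\mapsto(v,0)$ with $\widehat a$, giving $v\mapsto(a v,0)$. This is the defining map of $\xi_a$, proving \eqref{universal-baer}. Since $\xi$ is additive and $h\mapsto\xi_h$ is the isomorphism above, $\Ext^1_R(L,E_C)$ is identified with the left regular module of $\mathcal H/\mathcal F$, with $a$ acting through $\widehat a$.
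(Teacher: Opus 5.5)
Your proposal is correct and follows the paper's proof essentially step for step. You use the coinduction adjunction for injectivity, the coordinate description giving $E_C=V\oplus\mathcal F$ with essentiality via the socle, and the long exact sequence for $\Hom_R(L,-)$ with the lift $1\mapsto(0,h)$ for the pushout description; the only small difference is that you obtain \eqref{universal-baer} by composing the pushout map $v\mapsto(v,0)$ with $\widehat a$ where the paper invokes naturality of the connecting map, and your version also gives $(\widehat a|_{E_C})_*\xi_h=\xi_{a\circ h}$ for every $h$ directly.
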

\begin{proof}
For any $k$-vector space $W$, the adjunction
\[
\Hom_R(X,\Hom_k(R,W))\cong\Hom_k(X,W)
\]
shows that $\Hom_k(R,W)$ is injective. Its socle consists
of maps vanishing on $J$, and is isomorphic to $W$.
It is essential: if $f|_J\ne0$, multiplication by a suitable
$v\in J$ gives a nonzero socle element. In particular,
$C$ has simple essential socle and is finitely cogenerated.

An element of $E_C$ has only finitely many nonzero coordinate
maps $R\to k$. In \eqref{coinduced-action} this says that
$z\in V$ and $h(V)$ is contained in a fixed finite-coordinate
subspace of $V$. This condition on $h$ is equivalent to
finite rank: each finite-dimensional subspace of $V=k^{(\N)}$
is contained in such a coordinate subspace.
This proves the first equality in \eqref{calkin-hull-formula}.
Since $\Soc I_C=V\oplus0=\Soc E_C$ is essential,
$I_C$ is the injective hull of $E_C$. Formula
\eqref{coinduced-action} shows that its quotient is killed by $J$.

Apply $\Hom_R(L,-)$ to the hull sequence. Since $\Soc I_C\subseteq E_C$ and $I_C$ is injective, the
relevant part of the long exact sequence is
\[
0\longrightarrow\Hom_R(L,I_C/E_C)
\xrightarrow{\partial}\Ext_R^1(L,E_C)\longrightarrow0.
\]
Thus the connecting map is an isomorphism. Identifying
$\Hom_R(L,I_C/E_C)$ with $I_C/E_C$ by evaluation at $1+J$
gives \eqref{calkin-ext}. The lift
$R\to I_C$, $1\mapsto(0,h)$ restricts on $J$ to
$v\mapsto(h(v),0)$, proving the stated pushout description.

The formula for $\widehat a$ respects \eqref{coinduced-action}
and preserves $V\oplus\mathcal F$. Its action on the quotient
is left composition by $a$. Naturality of the connecting map
gives \eqref{universal-baer}. Its kernel as an action of
$\mathcal H$ is exactly $\mathcal F$, as is also seen by
applying the action to $\xi_{\id_V}$.
\end{proof}

\begin{theorem}[A counterexample with free cyclic factors]\label{free-product-bridge}
For the same ring, put $P_{\mathrm f}=R^{\N}$.
Relative to the basis $x_n$ of $V$, there are isomorphisms
of right $R$-modules
\begin{equation}\label{full-product-defect}
\frac{\Soc P_{\mathrm f}}{P_{\mathrm f}J}
 \cong\frac{\End_k(V)}{\mathcal F}
 \cong\frac{I_C}{E_C}
 \cong\Ext_R^1(L,E_C).
\end{equation}
There is also a noncanonical decomposition
\begin{equation}\label{free-product-decomposition}
P_{\mathrm f}\cong(R\otimes_k k^{\N})\oplus(I_C/E_C).
\end{equation}
In particular $P_{\mathrm f}\cong L\oplus P_{\mathrm f}$, whereas,
for distinct nonnegative integers $a,b$, all finite
$I_0,J_0\subseteq\N$, and every $Y\mid R^{I_0}$,
\begin{equation}\label{free-finite-failure}
L^a\oplus Y\not\cong L^b\oplus R^{J_0}.
\end{equation}
Consequently the product comparison of Question 3.1 also
fails when every family factor $C_n=D_n=R$ is free cyclic.
These family factors are not finitely cogenerated.
For a nonzero $h\in\End_k(V)$, let
$z_h=(h(x_n))_n\in\Soc P_{\mathrm f}$ and $L_h=kz_h$.
The specified inclusion $L_h\hookrightarrow P_{\mathrm f}$
splits if and only if $\xi_h\ne0$, equivalently if $h$
has infinite rank.
\end{theorem}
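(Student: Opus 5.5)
I plan to compute everything in explicit coordinates, exactly as in the proof of Theorem~\ref{counterexample}. Write an element of $P_{\mathrm f}=R^{\N}$ as a pair $(c,w)$ with $c\in k^{\N}$ and $w=(w_n)_n\in V^{\N}$. The action of $(b,v)$ is then $(cb,\ wb+(c_nv)_n)$. Since $V=k^{(\N)}$ has basis $x_n$, a family $w\in V^{\N}$ is the same thing as a linear map $h_w:V\to V$ with $h_w(x_n)=w_n$. This is the identification $V^{\N}\cong\End_k(V)=\mathcal H$, and under it $w=z_h$ when $h=h_w$.

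The first step is to identify $\Soc P_{\mathrm f}=0\oplus V^{\N}$, by testing multiplication by each $x_m$ as in \eqref{radical-discrepancy}. Next, $P_{\mathrm f}J$ consists of the finite sums $\sum_i (c^{(i)}_nv_i)_n$. These are exactly the families $(c_n)$ taking values in a finite-dimensional subspace of $V$, i.e.\ the $h$ of finite rank. So $\Soc P_{\mathrm f}/P_{\mathrm f}J\cong\mathcal H/\mathcal F$. This isomorphism is $R$-linear because $J$ kills both sides; on $\mathcal H/\mathcal F$ this uses \eqref{coinduced-action}. The remaining isomorphisms in \eqref{full-product-defect} come from Theorem~\ref{calkin-hull}.

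For \eqref{free-product-decomposition}, I choose a $k$-complement $W$ of $\mathcal F$ in $\mathcal H$, viewed as a subspace of $V^{\N}$. The plan is to show $P_{\mathrm f}=\bigl(k^{\N}\oplus\mathcal F\bigr)\oplus(0\oplus W)$ as $R$-modules. The first summand is a submodule since it contains $P_{\mathrm f}J$. The second is a submodule killed by $J$, isomorphic to $\mathcal H/\mathcal F$. I then identify $k^{\N}\oplus\mathcal F$ with $R\otimes_kk^{\N}=k^{\N}\oplus V\otimes_kk^{\N}$ via $v\otimes c\mapsto (c_nv)_n$. The point to check is that finite-rank families are exactly finite sums $\sum v_i\otimes c^{(i)}$; this holds by choosing a basis of the image. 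The quotient $\mathcal H/\mathcal F$ is an infinite-dimensional semisimple module, by disjoint infinite-rank idempotents as before. Hence $W\cong L\oplus W$, and therefore $P_{\mathrm f}\cong L\oplus P_{\mathrm f}$.

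To prove \eqref{free-finite-failure}, I use the invariant $d(N)=\dim_k\Soc N/NJ$. This is additive, but now I should not assume finite length. For $R^{F'}$ with $F'$ finite, $\Soc=J^{F'}=R^{F'}J$, so $d=0$. For a summand $Y$ of $R^{F'}$, the argument in Theorem~\ref{counterexample} gives $\Soc Y=YJ$, so $d(Y)=0$. Then $d(L^a)=a\ne b$ separates the two sides. The family factor $R$ is not finitely cogenerated, because $\Soc R=J$ has infinite length.

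The splitting criterion is where I expect the main work. If $h$ has infinite rank, then $z_h\notin P_{\mathrm f}J$. I choose a $k$-linear functional on $V^{\N}$ that vanishes on $\mathcal F$ (that is, on $P_{\mathrm f}J$) and sends $z_h\mapsto1$, and I compose it with the projection $(c,w)\mapsto w$. This is $R$-linear because $J$ acts on $V^{\N}$ only through $c$, which the map ignores, and the image lands in $P_{\mathrm f}J$, which is killed. This mirrors the retraction built after Corollary~\ref{bergman-answer}. Conversely, if $h$ has finite rank, then $z_h\in P_{\mathrm f}J$. Every map $P_{\mathrm f}\to L$ kills $P_{\mathrm f}J$, so $z_h$ is killed and the inclusion cannot split. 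The equivalence with $\xi_h\ne0$ is then \eqref{calkin-ext}.
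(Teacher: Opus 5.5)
Your proposal is correct and follows the paper's proof essentially step for step. It uses the same coordinates $P_{\mathrm f}=k^{\N}\oplus V^{\N}$, the identification of $P_{\mathrm f}J$ with the finite-rank families, the $R$-module splitting along a complement $W$ of $\mathcal F$, and the invariant $\dim_k(\Soc N/NJ)$ without assuming finite length. It also builds the retraction onto $L_h$ from a linear functional killing $P_{\mathrm f}J$; yours additionally kills the $k^{\N}$ part, which is a special case of the paper's arbitrary functional on $P_{\mathrm f}/P_{\mathrm f}J$. The one check you leave implicit is injectivity of $R\otimes_k k^{\N}\to k^{\N}\oplus\mathcal F$. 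The paper verifies it by writing a tensor as $\sum_i v_i\otimes c^{(i)}$ with the $v_i$ linearly independent, so that each vanishing coordinate $\sum_i c^{(i)}_n v_i=0$ forces all $c^{(i)}_n=0$.
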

\begin{proof}
Write $P_{\mathrm f}=k^{\N}\oplus V^{\N}$ as a vector space.
Its action is
\[
(a,(v_n))(c,v)=(ca,(cv_n+a_nv)_n).
\]
Thus $\Soc P_{\mathrm f}=0\oplus V^{\N}$, and
$P_{\mathrm f}J=0\oplus\mathcal A$, where
\[
\mathcal A=\{(v_n)\in V^{\N}:
             \dim_k\langle v_n:n\in\N\rangle<\infty\}.
\]
Indeed, a finite sum of products with elements of $J$ has
this property. Conversely, writing all $v_n$ in a fixed
finite basis expresses the sequence as such a finite sum.
The vector-space isomorphism
\[
\Theta:V^{\N}\longrightarrow\End_k(V),\qquad
\Theta((v_n))(x_n)=v_n,
\]
sends $\mathcal A$ exactly to $\mathcal F$.
This proves \eqref{full-product-defect} using
Theorem~\ref{calkin-hull}. All displayed modules are killed
by $J$, so the identifications are also $R$-linear.

Choose $V^{\N}=\mathcal A\oplus W$ as vector spaces. Then
\[
P_{\mathrm f}=(k^{\N}\oplus\mathcal A)\oplus(0\oplus W)
\]
is a decomposition of $R$-modules. Here
$W\cong V^{\N}/\mathcal A\cong I_C/E_C$, and the map
$R\otimes_k k^{\N}\to k^{\N}\oplus\mathcal A$ sending
$(c,v)\otimes a$ to $(ca,(a_nv)_n)$ is an isomorphism:
the displayed action shows that it is $R$-linear, with $R$
acting on the first tensor factor. Its radical components map
$V\otimes_k k^{\N}$ onto
$\mathcal A$. For injectivity, write a tensor as
$\sum_{i=1}^r v_i\otimes a_i$ with the $v_i$ linearly
independent. If every coordinate $\sum_i a_i(n)v_i$ is zero,
then all $a_i(n)$ vanish, so the tensor is zero.
This proves \eqref{free-product-decomposition}.
The space $W$ is infinite dimensional: diagonal operators
with disjoint infinite supports
are linearly independent modulo $\mathcal F$.
Hence $W\cong L\oplus W$, which proves absorption of $L$.

Every finite free module satisfies
$\Soc(R^m)=J^{\oplus m}=R^mJ$. Intersecting this equality with a
direct summand $Y$ gives $\Soc Y=YJ$, as in the proof of
Theorem~\ref{counterexample}. It follows that
\[
\Soc(L^a\oplus Y)/(L^a\oplus Y)J\cong L^a,
\qquad
\Soc(L^b\oplus R^{J_0})/(L^b\oplus R^{J_0})J\cong L^b.
\]
An isomorphism in \eqref{free-finite-failure} would identify
these finite-dimensional quotients, a contradiction. This
argument does not assume the finite free modules have finite
length. Taking $A=L$, $B=L^2$ proves the free-family claim.
Finally, every map $P_{\mathrm f}\to L$ kills
$P_{\mathrm f}J$. Thus the specified inclusion of $L_h$
cannot split if $z_h\in P_{\mathrm f}J$. Otherwise a
$k$-linear functional $\lambda:P_{\mathrm f}/P_{\mathrm f}J\to k$
taking the value $1$ on $z_h+P_{\mathrm f}J$ gives a map
$r:P_{\mathrm f}\to L_h$ by $r(x)=\lambda(x+P_{\mathrm f}J)z_h$.
Both the quotient and $L_h$ are annihilated by $J$, so $r$
is $R$-linear, and $r(z_h)=z_h$. The description of $\mathcal A$
and \eqref{calkin-ext} give the asserted equivalences.
\end{proof}

\begin{theorem}\label{product-hull-bridge}
Embed $E_n$ in $C=k\oplus V^*$ by
\[
u_n\longmapsto(0,x_n^*),\qquad v_n\longmapsto(1,0),
\]
where $x_n^*(x_m)=\delta_{nm}$, and regard $P$ as a submodule
of $C^{\N}$. In the coordinates $P=F\oplus F$ of
\eqref{product-action}, where $F=k^{\N}$ and $F_0=k^{(\N)}$,
\begin{equation}\label{bridge-intersections}
P\cap I_C=F\oplus F_0,\qquad
P\cap E_C=F_0\oplus F_0=\bigoplus_n E_n.
\end{equation}
Inclusion in $I_C$ therefore induces an injection
\begin{equation}\label{bridge-map}
\frac{\Soc P}{PJ}\cong\frac{F}{F_0}
 \cong\frac{P\cap I_C}{P\cap E_C}
 \lhook\joinrel\longrightarrow\frac{I_C}{E_C}
 \cong\Ext_R^1(L,E_C).
\end{equation}
With the specified coordinates this map sends
$\lambda+F_0$ to $D_\lambda+\mathcal F$, and then to
$\xi_{D_\lambda}$, where $D_\lambda(x_n)=\lambda_nx_n$.
The diagonal map $F/F_0\to\mathcal H/\mathcal F$ is a unital
$k$-algebra embedding. The Ext identification in
\eqref{bridge-map} is one of modules, not of Yoneda rings.
Finally,
\begin{equation}\label{bridge-dimension}
\dim_k\Ext_R^1(L,E_C)=\dim_k(F/F_0)=|k|^{\aleph_0}.
\end{equation}
\end{theorem}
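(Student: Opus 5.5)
The proof is a coordinate computation. I would first write out how $C^{\N}$, $I_C$, $E_C$ and $P$ look as sequences of pairs, and then read off \eqref{bridge-intersections}. The remaining assertions follow from \eqref{calkin-hull-formula}, \eqref{calkin-ext} and a cardinality count.

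\textbf{The embedding of $E_n$.} Write $f\in C=\Hom_k(R,k)$ as $(f(1),f|_V)\in k\oplus V^*$, with action $(fr)(a)=f(ra)$. I would first check that the displayed map $E_n\to C$ is $R$-linear. For $(0,x_n^*)x_m$, evaluation at $1$ gives $x_n^*(x_m)=\delta_{nm}$, and evaluation on $V$ gives $0$ since $J^2=0$. So $(0,x_n^*)x_m=(\delta_{nm},0)$, matching $u_nx_m=\delta_{mn}v_n$. Also $(1,0)$ is killed by $J$.

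\textbf{The coordinates of $P$ and $I_C$.} With $P=F\oplus F$ in the coordinates $(a,b)$ of \eqref{product-action}, the element $(a,b)$ has $n$th coordinate $(b_n,a_nx_n^*)$ in $C^{\N}$. An element $(z,h)\in I_C=\Hom_k(R,V)$ has $n$th coordinate $(z_n,x_n^*\circ h)$. Hence a sequence $(c_n,\varphi_n)$ lies in $I_C$ if and only if two conditions hold:
\begin{enumerate}
\item $(c_n)$ is finitely supported;
\item for each $v\in V$, only finitely many $\varphi_n(v)$ are nonzero, so that $v\mapsto\sum_n\varphi_n(v)x_n$ defines $h:V\to V$.
\end{enumerate}
For elements of $P$, condition (ii) is automatic, because $\varphi_n=a_nx_n^*$ is supported only at $x_n$. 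Condition (i) says $b\in F_0$. This gives $P\cap I_C=F\oplus F_0$, and the associated map is $h=D_a$.

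Membership in $E_C$ requires finitely many nonzero coordinates. This gives $P\cap E_C=F_0\oplus F_0=\bigoplus_nE_n$.

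\textbf{The injection \eqref{bridge-map}.} The identification $\Soc P/PJ\cong F/F_0$ comes from \eqref{radical-discrepancy}, using the $b$-coordinate. The identification $(P\cap I_C)/(P\cap E_C)\cong F/F_0$ uses the $a$-coordinate. Both are $R$-linear because $J$ annihilates everything involved.

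Injectivity of the induced map into $I_C/E_C$ is formal: $(P\cap I_C)\cap E_C=P\cap E_C$. Under \eqref{calkin-hull-formula}, the class of $(a,b)$ goes to $D_a+\mathcal F$, and \eqref{calkin-ext} then sends it to $\xi_{D_a}$.

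The diagonal map $\lambda\mapsto D_\lambda$ satisfies $D_\lambda D_\mu=D_{\lambda\mu}$ and $D_{\boldsymbol1}=\id_V$. Moreover, $D_\lambda$ has finite rank exactly when $\lambda\in F_0$. So this map induces a unital algebra embedding $F/F_0\hookrightarrow\mathcal H/\mathcal F$.

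\textbf{The dimension count \eqref{bridge-dimension}.} This is the step needing the most care. For the lower bound, $\dim_k(F/F_0)=\dim_kk^{\N}=|k|^{\aleph_0}$ by Erd\H{o}s--Kaplansky, since $F_0$ has countable dimension. Together with the injection above, this gives $\dim_k\Ext^1_R(L,E_C)\ge|k|^{\aleph_0}$.

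For the upper bound, $\mathcal H=\End_k(V)\cong V^{\N}$ as vector spaces. Its cardinality is $\max(|k|,\aleph_0)^{\aleph_0}=|k|^{\aleph_0}$. Hence $\dim_k(\mathcal H/\mathcal F)\le|k|^{\aleph_0}$.

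The main obstacle is bookkeeping rather than any new idea. The $a$- and $b$-coordinates of $P$ exchange roles between $\Soc P/PJ$ and $(P\cap I_C)/(P\cap E_C)$. The proof should say explicitly that the first isomorphism in \eqref{bridge-map} identifies two copies of $F/F_0$ through different coordinates. It should also state that the Ext identification is only one of $R$-modules, and is not claimed to be compatible with Yoneda products.
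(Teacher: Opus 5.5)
Your proposal is correct and follows the paper's proof essentially step for step. Both arguments use the same coordinate description of $P$ inside $C^{\N}$ to read off the two intersections, and both identify the class of $(a,b)$ with $D_a+\mathcal F$. Both then use multiplicativity of $D_\lambda$ with its finite-rank criterion, and Erd\H{o}s--Kaplansky together with the count $|\mathcal H|\le|V|^{\aleph_0}$. The only cosmetic difference concerns how the two copies of $F/F_0$ are matched. You identify both quotients with $F/F_0$ in coordinates, whereas the paper obtains the same swap of the $a$- and $b$-coordinates from the $R$-linear map $d(a,b)=(0,a)$ induced by $u_n\mapsto v_n$, $v_n\mapsto0$.
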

\begin{proof}
The indicated embeddings respect multiplication by every $x_m$.
An element $(a,b)\in P$ corresponds to the map
\[
R\longrightarrow k^{\N},\qquad
\left(c,\sum_n d_nx_n\right)\longmapsto
cb+(a_nd_n)_n.
\]
Its image lies in $V=F_0$ precisely when $b\in F_0$;
there is no restriction on $a$, since $(d_n)$ has finite
support. The map has only finitely many nonzero output
coordinates precisely when both $a$ and $b$ have finite
support. This proves \eqref{bridge-intersections}.
Modulo $E_C$ its value is $D_a+\mathcal F$.
For completeness, the two occurrences of $F/F_0$ in
\eqref{bridge-map} can be related without choosing a complement
to $F_0$. The coordinate maps $E_n\to\Soc E_n$,
$u_n\mapsto v_n$, $v_n\mapsto0$, are $R$-linear and induce
$d:P\to\Soc P$, $d(a,b)=(0,a)$. On $P\cap I_C$ this map
is onto $\Soc P$, its kernel $0\oplus F_0$ lies in $P\cap E_C$,
and $d(P\cap E_C)=PJ$. It therefore gives the middle
isomorphism in \eqref{bridge-map}, with the indicated direction
reversed. All identifications are relative to the displayed
coordinate embeddings. Notice that $D_\lambda$ has finite
rank exactly when $\lambda\in F_0$, and
$D_\lambda D_\mu=D_{\lambda\mu}$. This proves the algebra
embedding assertion; it does not assign a Yoneda product to
$\Ext^1$.

The Erd\H{o}s--Kaplansky theorem gives
$\dim_kF=|k|^{\aleph_0}>\aleph_0$
\cite[Section 6]{BergmanEK}; quotienting by $F_0$ leaves that
dimension unchanged. The injection gives the lower bound
in \eqref{bridge-dimension}. Since an endomorphism of $V$
is determined by the images of its countable basis,
$|\mathcal H|\leq|V|^{\aleph_0}=|k|^{\aleph_0}$.
This gives the upper bound.
\end{proof}

\begin{corollary}\label{diagonal-baer-witnesses}
For $\lambda\in F$ with infinite support, put
$x_\lambda=(0,D_\lambda)\in I_C$ and $U_\lambda=x_\lambda R$.
Then
\[
U_\lambda/U_\lambda J\cong L,\qquad
U_\lambda J=\Soc U_\lambda=U_\lambda\cap E_C
 =\im D_\lambda\oplus0\cong L^{(\N)},\qquad U_\lambda J^2=0.
\]
The sequence
\[
0\longrightarrow E_C\longrightarrow E_C+U_\lambda
 \longrightarrow L\longrightarrow0
\]
represents $\xi_{D_\lambda}$ and is nonsplit, although its
pushout to every finite coordinate subsum $C^A$ is split.
Two such sequences, with the displayed kernel and quotient
identifications, are equivalent if and only if their
sequences $\lambda$ differ by an element of $F_0$.
\end{corollary}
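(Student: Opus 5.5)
The plan is to compute everything directly in the coordinates \eqref{coinduced-action}, where $I_C=V\oplus\mathcal H$ and the action is $(z,h)(c,v)=(cz+h(v),ch)$.

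\textbf{The module $U_\lambda$.} Since $x_\lambda=(0,D_\lambda)$, we get
\[
x_\lambda(c,v)=(D_\lambda v,\,cD_\lambda).
\]
Hence $U_\lambda=\{(D_\lambda v,cD_\lambda)\}$. Its radical is $U_\lambda J=x_\lambda J=\im D_\lambda\oplus0$, and a second multiplication by $J$ kills this, so $U_\lambda J^2=0$. The quotient $U_\lambda/U_\lambda J$ is spanned by the class of $x_\lambda$ and is nonzero because $D_\lambda\ne0$; it is therefore one-dimensional and $\cong L$. Next, $\Soc I_C=V\oplus0$, so $\Soc U_\lambda=U_\lambda\cap(V\oplus0)$. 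This consists of the elements with $cD_\lambda=0$, that is $c=0$, and so equals $U_\lambda J$. By \eqref{calkin-hull-formula}, $U_\lambda\cap E_C$ consists of the elements with $cD_\lambda\in\mathcal F$. Since $\lambda$ has infinite support, $D_\lambda$ has infinite rank, which forces $c=0$; this gives the same subspace. Finally, $\im D_\lambda$ is spanned by the $x_n$ with $\lambda_n\neq0$, infinitely many, and is killed by $J$, so it is $\cong L^{(\N)}$.

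\textbf{The extension class.} The quotient $(E_C+U_\lambda)/E_C$ is spanned by the class of $x_\lambda$, which is $D_\lambda+\mathcal F$ in $I_C/E_C\cong\mathcal H/\mathcal F$. The sequence is therefore the pullback of the hull sequence along the inclusion of the simple submodule generated by this class. By the definition in \eqref{calkin-ext}, that pullback is $\xi_{D_\lambda}$. It is nonsplit because $D_\lambda\notin\mathcal F$ and \eqref{calkin-ext} is injective. The classification statement follows from the same isomorphism: $\xi_{D_\lambda}=\xi_{D_\mu}$ exactly when $D_{\lambda-\mu}\in\mathcal F$, that is, when $\lambda-\mu\in F_0$. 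Here I should check that the displayed identifications of the kernel with $E_C$ and the quotient with $L$, with the quotient identified by sending $x_\lambda$ to $1+J$, match the evaluation at $1+J$ used in \eqref{calkin-ext}.

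\textbf{Splitting after pushout to $C^A$.} For finite $A$, I will use the pushout description in Theorem~\ref{calkin-hull}: $\xi_{D_\lambda}$ is the pushout of $0\to J\to R\to L\to0$ along $v\mapsto(D_\lambda v,0)$. Composing with the coordinate projection $E_C\to C^A$, the resulting map $J\to C^A$ is $v\mapsto(\pi_AD_\lambda v,0)$, and $\pi_AD_\lambda$ has finite rank. In the coordinates of $I_C$ restricted to $A$, i.e. inside $\Hom_k(R,k^A)$, this map extends to $R$ by $1\mapsto(0,\pi_AD_\lambda)$, which is an element of $C^A$ since $C^A$ is the whole coinduced module for finitely many coordinates. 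Hence the pushed-out class vanishes. Alternatively, $C^A$ is injective, being a finite sum of injectives, so $\Ext^1_R(L,C^A)=0$ and every extension of $L$ by $C^A$ splits; this one-line argument suffices and is the route I would actually use.

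\textbf{Expected obstacle.} The main obstacle is only the bookkeeping of conventions: making sure the pullback and evaluation identifications agree with \eqref{calkin-ext}, so that the classification holds with the displayed identifications rather than merely up to a scalar.
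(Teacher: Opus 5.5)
Your proposal is correct and follows essentially the same route as the paper. You compute $U_\lambda$ directly from \eqref{coinduced-action}, use the infinite rank of $D_\lambda$ to get $U_\lambda\cap E_C=\im D_\lambda\oplus0$, read off the class $\xi_{D_\lambda}$, its nonvanishing and the $F_0$-classification from the isomorphism \eqref{calkin-ext}, and obtain the finite-coordinate splitting from injectivity of $C^A$. The convention check you flag is harmless: sending $(0,D_\lambda)+E_C$ to $1+J$ is exactly the evaluation-at-$1+J$ identification used to define $\xi_h$ as a pullback.
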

\begin{proof}
Formula \eqref{coinduced-action} gives
$U_\lambda=(\im D_\lambda\oplus0)+k(0,D_\lambda)$.
Since $D_\lambda$ has infinite rank, no nonzero scalar
multiple of it belongs to $\mathcal F$; multiplication by
$J$ gives exactly $\im D_\lambda\oplus0$.
The quotient $(E_C+U_\lambda)/E_C$ is identified with $L$
by sending $(0,D_\lambda)+E_C$ to $1+J$. With this
identification its extension class is $\xi_{D_\lambda}$.
Two such extensions are equivalent precisely when
$D_\lambda-D_\mu$ has finite rank, or equivalently
$\lambda-\mu\in F_0$.
Every $C^A$ with $A$ finite is injective, so the indicated
finite-coordinate pushout splits.
\end{proof}

The intersection $P\cap I_C=F\oplus F_0$ is the summand
used in the proof of Theorem~\ref{counterexample}; this gives
the map \eqref{bridge-map}. Equations \eqref{calkin-ext} and
\eqref{universal-baer} describe all extensions of $L$ by
$E_C$ with fixed kernel and quotient identifications. The usual fact that a sum of injectives need not be
injective, and the finite-coordinate splitting assertion
alone, are classical consequences of the theory of
$\Sigma$-injectivity \cite[Corollary 7]{HZ}.
The quotient $\mathcal H/\mathcal F$ is the classical
algebraic Calkin algebra; compare its definition in
\cite[Definition 3.3]{Efimov}. Bergman already uses this
finite-rank quotient in examples concerning infinite products
\cite[Section 2]{Bergman2006}. Those examples form quotients
of an endomorphism ring; here the base ring remains the
square-zero ring $R$. The additional conclusions are the full
defect identification \eqref{full-product-defect}, its realization
as a semisimple summand in \eqref{free-product-decomposition},
the free-family failure of finite comparison, and the intersection
map \eqref{bridge-map} for the length-two construction.
The action of one Baer class describes the associated extensions.

There is also a useful distinction between the two comparison
problems on this ring. The product $C^{\N}$ is injective,
so Theorem~\ref{product-comparison} applies to decompositions
of it with finitely cogenerated distinguished and family
factors as specified there. Nevertheless $E_C$ is not injective,
by \eqref{calkin-hull-formula}. Finite comparison in that
injective product therefore does not imply a uniform finite
support assertion for extensions into its countable sum.

The coefficient also lacks the scalar endomorphism property
of the regular module in \eqref{F} and of the coefficient
in Theorem~\ref{descent}. Indeed $C=k\oplus V^*$, and
commutation with multiplication by $v\in V$ gives
\[
\End_R(C)\cong k\ltimes V^{**},\qquad
(z,f)\longmapsto(cz+\beta(f),cf),
\quad c\in k,\ \beta\in V^{**}.
\]
Under this identification the scalar map $R\to\End_R(C)$
is the evaluation inclusion $k\oplus V\to k\oplus V^{**}$,
which is not onto. A nonzero functional on $F/F_0$ gives
an element of $V^{**}$ vanishing on $F_0\subseteq V^*=F$;
no nonzero evaluation by an element of $V$ does so.
Thus finite cogeneration of an injective coefficient alone
does not impose the endomorphism constraints present in the
Faith problem.

\subsection{Socle restriction in the explicit hull}
The same hull also realizes the topological obstruction
considered in Theorem~\ref{restriction}.
\begin{proposition}\label{nonopen-example}
The endomorphisms of $I_C=V\oplus\mathcal H$ are precisely
\begin{equation}\label{endomorphism-formula}
F_{a,\beta}(z,h)=(az+\beta(h),a\circ h),\qquad
a\in\End_k(V),\quad\beta\in\Hom_k(\mathcal H,V).
\end{equation}
Restriction to $\Soc I_C=V\oplus0$ is the continuous
surjective ring map $\rho:F_{a,\beta}\mapsto a$.
It has the algebraic ring section $a\mapsto F_{a,0}$ and
square-zero kernel $\Jac\End_R(I_C)$. Its quotient topology
is discrete, whereas the finite topology on $\End_k(V)$
is not discrete. Thus $\rho$ is not open and this section
is not continuous. Both endomorphism rings are complete
and Hausdorff in their finite topologies.
\end{proposition}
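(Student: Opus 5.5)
We first compute all $R$-endomorphisms of $I_C=V\oplus\mathcal H$ directly from the action \eqref{coinduced-action}. Let $\phi\in\End_R(I_C)$. Since $\Soc I_C=V\oplus0$ is fully invariant and $J$ acts as zero on it, $\phi$ restricts to a $k$-linear map $a:V\to V$. For $(0,h)$, write $\phi(0,h)=(\beta(h),\gamma(h))$, with $\beta:\mathcal H\to V$ and $\gamma:\mathcal H\to\mathcal H$ $k$-linear. Apply $\phi$ to $(0,h)(0,v)=(h(v),0)$. This gives $a(h(v))=\phi(0,h)\cdot(0,v)=(\gamma(h)(v),0)$, so $\gamma(h)=a\circ h$. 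Conversely, we check that every $F_{a,\beta}$ respects multiplication by $(c,v)$. The check is $F_{a,\beta}(cz+h(v),ch)=(acz+ah(v)+c\beta(h),c\,ah)$, which equals $(az+\beta(h),ah)(c,v)$. Scalars act $k$-linearly, so this identity holds, and \eqref{endomorphism-formula} follows.

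Next we record the ring structure. Composition is $F_{a,\beta}F_{a',\beta'}=F_{aa',\,a\beta'+\beta\circ(a'\circ-)}$. From this:
\begin{enumerate}
\item $\rho:F_{a,\beta}\mapsto a$ is a surjective ring homomorphism, matching Theorem~\ref{restriction};
\item $a\mapsto F_{a,0}$ is a ring section;
\item the kernel $\{F_{0,\beta}\}$ has square zero, since $F_{0,\beta}F_{0,\beta'}=F_{0,0}$.
\end{enumerate}
Because $\Soc I_C$ is essential (Theorem~\ref{calkin-hull}), Theorem~\ref{restriction} identifies this kernel with $\Jac\End_R(I_C)$. Continuity of $\rho$ and completeness and Hausdorffness of both endomorphism rings are the general facts recorded at the start of Section~\ref{restriction-section}.

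For the topology, use the element $x=(0,\id_V)$, which lies in $I_C$. The cyclic module $U=xR=V\oplus k\,\id_V$ contains the whole socle $V\oplus0$. By \eqref{image}, $\rho(\mathcal N_{I_C}(U))=\mathcal N_{V}(V)=0$. This can also be seen directly: $F_{a,\beta}(0,\id_V)=0$ forces $a=0$. Thus $\ker\rho$ contains the open set $\mathcal N_{I_C}(U)$, so it is open and the quotient topology on $\End_k(V)$ is discrete. The finite topology on $\End_k(V)$ is not discrete, by the complement argument at the end of the proof of Theorem~\ref{homogeneous} with $T=L$. Hence $\rho$, viewed as a map onto this finite topology, is not open.

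The remaining step is discontinuity of the section $s:a\mapsto F_{a,0}$, which is the main point to argue carefully. If $s$ were continuous, then $s^{-1}(\mathcal N_{I_C}(U))$ would be an open zero-neighbourhood in $\End_k(V)$. But $F_{a,0}(0,\id_V)=(0,a)$, so this preimage is $\{a:a=0\}=\{0\}$, which is not open. Alternatively, $\rho\circ s=\id$ together with continuity of $s$ would make $\rho$ open onto its image topology, which contradicts the previous paragraph. I would present the direct preimage computation, since it is a one-line check.
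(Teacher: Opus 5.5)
Your proof is correct and follows essentially the same route as the paper. It uses the same derivation of $\gamma(h)=a\circ h$ from multiplication by $v\in V$, the same composition formula for the section and the square-zero kernel, and Theorem~\ref{restriction} for the radical. It also uses the same test element $x=(0,\id_V)$, both to show $\ker\rho$ is open and to compute the trivial preimage under the section $a\mapsto F_{a,0}$.
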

\begin{proof}
An endomorphism is $k$-linear and preserves $V\oplus0$, so
it has the form $(z,h)\mapsto(az+\beta(h),\gamma(h))$.
Commutation with multiplication by $v\in V$ gives
$\gamma(h)(v)=a(h(v))$. Hence $\gamma(h)=a\circ h$;
conversely, every displayed map respects
\eqref{coinduced-action}. Composition is
\[
F_{a,\beta}F_{b,\delta}=F_{ab,\,a\delta+\beta\lambda_b},
\qquad\lambda_b(h)=b\circ h.
\]
This proves the ring section and square-zero assertions.
The radical equality and continuity follow from
Theorem~\ref{restriction}.
For $x=(0,\id_V)$ we have
\[
F_{a,\beta}(x)=(\beta(\id_V),a),\qquad
\rho\bigl(\mathcal N_{I_C}(xR)\bigr)=0.
\]
Thus the kernel is open. Every finite subset of $V$ is
annihilated by a nonzero endomorphism, so zero is not open
in the target finite topology. The inverse image of
$\mathcal N_{I_C}(xR)$ under the displayed section is zero,
which also proves its discontinuity. Completeness was
noted before Theorem~\ref{restriction}.
\end{proof}

For $\lambda=(1,1,\ldots)$, the cyclic module $U_\lambda$
above is a copy of $R$. This ring is not self-injective:
for independent $v_1,v_2\in V$, the map $kv_1\to R$,
$v_1\mapsto v_2$, is $R$-linear but cannot be multiplication
by an element of $R$. This is an application of the countable construction with
injective coefficient $C$, outside hypothesis \eqref{F}.

\clearpage
\begingroup\small\setlength{\bibsep}{1pt}

\endgroup
\end{document}